\tikzset{>=stealth}
\tikzset{dbl/.style={double, 
                     double equal sign distance, 
                     -implies, 
                     shorten >=10pt, 
                     shorten <=10pt}}
\tikzset{font={\fontsize{10pt}{12}\selectfont}}   
\DeclareMathOperator{\id}{id}
\DeclareMathOperator*{\colim}{colim}
\DeclareMathOperator{\sing}{Sing}
\DeclareMathOperator{\Diag}{Diag}
\DeclareMathOperator{\ob}{Ob}
\DeclareMathOperator{\mor}{Mor}
\DeclareMathOperator{\Hom}{Hom}
\DeclareMathOperator{\pr}{pr}
\DeclareMathOperator{\Cyl}{Cyl}
\newcommand{\stack}[1]{\mathcal{#1}}
\newcommand{\category}[1]{{\mathsf{#1}}}
\newcommand{\sset}{\category{sSet}}
\newcommand{\presheafset}{\category{pshSet}}
\newcommand{\presheafgpd}{\category{pshGpd}}
\newcommand{\topspace}{\category{Top}}
\newcommand{\sgpd}{\category{sGpd}}
\newcommand{\bsset}{\category{bsSet}}
\newcommand{\topstack}{\category{topStack}}
\newcommand{\set}{\category{Set}}
\newcommand{\gpd}{\category{Gpd}}
\newcommand{\Deltacat}{\mathbf{\Delta}}
\newcommand{\smashedlongrightarrow}{\setbox0=\hbox{$\longrightarrow$}\ht0=2pt\box0}
\newcommand{\risom}{\buildrel\sim\over{\smashedlongrightarrow}}
\newcommand{\rcong}{\buildrel\cong\over{\smashedlongrightarrow}}
\newcommand{\stimes}{\times}
\newcommand{\ttimes}{\tilde{\stimes}}
\newcommand{\Del}{_{\Deltacat}}
\newcommand{\op}{^\text{op}}
\newcommand{\stackvee}{\coprod}
\newtheorem{theorem}{Theorem}[section]
\newtheorem{cor}[theorem]{Corollary}
\newtheorem{lemma}[theorem]{Lemma}
\newtheorem{prop}[theorem]{Proposition}
\theoremstyle{definition}
\newtheorem{defn}[theorem]{Definition}
\theoremstyle{remark}
\newtheorem{rem}[theorem]{Remark}
\numberwithin{equation}{section} \theoremstyle{remark}
\newtheorem{ex}[theorem]{Example}
\begin{document}

\title{Singular chains on topological stacks, I}
\author{Thomas Coyne, Behrang Noohi}
\date{\today}

\begin{abstract}
We extend the functor $\sing$ of singular chains to the category of topological
stacks and establish its main  properties. We  prove that  $\sing$ respects
weak equivalences and takes a morphism of topological stacks that is both a
Serre and a Reedy fibration to a Kan fibration of simplicial sets. When
restricted to the category of topological spaces $\sing$ coincides with
the usual singular functor.
\end{abstract}

\maketitle

\tableofcontents

\section{Introduction}{\label{S:Intro}}

This is the first instalment of a two-part paper 
investigating singular chains on topological stacks. 

Given a topological stack $\stack{X}$ we define the simplicial 
set $\sing(\stack{X})$ of singular chains on $\stack{X}$ and 
establish its main properties. This generalizes the usual functor 
$\sing: \topspace \to \sset$ of singular chains on  topological 
spaces. We address the following questions about $\sing(\stack{X})$: 
functoriality with respect to morphisms of stacks, the homotopy type 
of $\sing(\stack{X})$, and the effect on fibrations of topological stacks.

\medskip\noindent
{\bf Functoriality and the homotopy type of $\sing(\stack{X})$.} There are several 
ways to define the homotopy type of a topological stack (see for instance 
\cite{Behrend, Haefliger, Moerdijk, No12}). In \cite{No12} the notion of 
\emph{classifying space} of a topological stack  is introduced to give a 
better grip on the functoriality of the homotopy type. Nevertheless,  
the functoriality of the classifying space only makes sense in the homotopy 
category of topological spaces, that is, the classifying space is a functor
$\operatorname{CS} : \topstack \to \operatorname{Ho}(\topspace)$.

Our construction of singular chains in this paper enhances this by giving 
us an honest  functor $\sing : \topstack \to \sset$. When restricted to 
the subcategory $\topspace$, this functor coincides with the usual singular 
functor on topological spaces. The functor $\sing : \topstack \to \sset$ in 
fact lifts the classifying space functor $\operatorname{CS}$,
  	\[
\begin{tikzpicture}[scale=1.5]

	\node (a) at (0,1) {$\topstack$};
	\node (b) at (0,0) {$\operatorname{Ho}(\topspace)$};
	\node (c) at (2,1) {$\sset$};
	\node (d) at (2,0) {$\operatorname{Ho}(\sset)$};
	
	\draw[->] (a) to node[left] {$\operatorname{CS}$} (b);
	\draw[->] (a) to node[above] {$\sing$} (c);
	\draw[->] (b) to node[below] {$\sim$} (d);
	\draw[->] (c) to node[right] {} (d);

\end{tikzpicture}
\]
at least for the full subcategory of $\topstack$ consisting of Serre stacks.
This is a consequence of one of our main results (\cref{T:preserves we}).

\begin{theorem}
\label{T:main1}
Let $f : \stack{X}\to\stack{Y}$ be a weak equivalence of Serre stacks. 
Then, $\sing(f) : \sing(\stack{X})\to\sing(\stack{Y})$ is a weak equivalence 
of simplicial sets. In particular, if $X \to \stack{X}$ is a classifying 
space for $\stack{X}$, then the induced map $\sing(X) \to \sing(\stack{X})$
is a weak equivalence.
\end{theorem}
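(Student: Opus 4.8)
The plan is to run Ken Brown's argument for the functor $\sing$ restricted to the category of Serre stacks. First I would record that this category, with its weak equivalences and with the class of fibrations consisting of those morphisms that are simultaneously Serre and Reedy fibrations, is a category of fibrant objects: weak equivalences satisfy two-out-of-three, the fibrations are closed under composition and base change, base changes of acyclic fibrations are again acyclic, every Serre stack is fibrant, and the diagonal $\stack{X}\to\stack{X}\times\stack{X}$ factors as a weak equivalence followed by a fibration (a path object $\stack{X}^{|\Delta^1|}$). The base-change and path-object axioms are where the hypothesis \emph{Serre} is genuinely used, and I expect them to have been set up in the preceding sections; granting them, the axioms hold.

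By Ken Brown's lemma it then suffices to prove that $\sing$ carries every \emph{acyclic} Serre-and-Reedy fibration to a weak equivalence of simplicial sets. For this I would use the acyclic refinement of the companion theorem that $\sing$ takes a Serre-and-Reedy fibration to a Kan fibration: an acyclic Serre-and-Reedy fibration $p\colon\stack{X}\to\stack{Y}$ has the right lifting property --- in the strict, coherent sense made available by the Reedy condition --- against the topological boundary inclusions $|\partial\Delta^n|\hookrightarrow|\Delta^n|$; since, via the defining (diagonal) construction, an $n$-simplex of $\sing$ of a stack amounts essentially to a morphism out of $|\Delta^n|$, transporting these lifting properties shows that $\sing(p)$ has the right lifting property against every $\partial\Delta^n\hookrightarrow\Delta^n$, hence is an acyclic Kan fibration and in particular a weak equivalence. (Concretely, Ken Brown's argument factors the given weak equivalence $f$ through its mapping cocylinder $\stack{X}\to\stack{X}\times_{\stack{Y}}\stack{Y}^{|\Delta^1|}\to\stack{Y}$ as a homotopy equivalence followed by an acyclic Serre-and-Reedy fibration, and uses in addition that $\sing$ preserves homotopies, since it sends an $|\Delta^1|$-parametrized homotopy of stacks to a $\Delta^1$-parametrized homotopy of simplicial sets via the standard weak equivalence $\Delta^1\hookrightarrow\sing|\Delta^1|$.) This proves the first assertion.

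The ``in particular'' is then immediate: a classifying space $X\to\stack{X}$ is, by construction, a weak equivalence of topological stacks; its source is an honest topological space and hence a Serre stack, and $\stack{X}$ is assumed to be a Serre stack, so the first part applies, and since $\sing$ restricts on $\topspace$ to the ordinary singular functor the map in question is literally $\sing$ of $X\to\stack{X}$. I expect the main obstacle to lie in the rigidity hidden in the second paragraph: because a topological stack is a $2$-categorical object, a lift against $|\partial\Delta^n|\hookrightarrow|\Delta^n|$ exists a priori only up to a $2$-isomorphism, and arranging a coherent simultaneous choice of such lifts over all faces --- so that they really do assemble into a lift at the level of the (diagonal of the) simplicial set $\sing$ --- is exactly what the Reedy-fibration hypothesis is there to enable and is the technical heart of the argument. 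A secondary point to pin down is the verification of the category-of-fibrant-objects axioms for Serre stacks, in particular that the path object $\stack{Y}^{|\Delta^1|}$ stays within the class of Serre stacks and that evaluation at the endpoints $\stack{Y}^{|\Delta^1|}\to\stack{Y}\times\stack{Y}$ is a Serre-and-Reedy fibration.
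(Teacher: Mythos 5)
Your reduction via Ken Brown's lemma is coherent in outline, and the key input you want --- that $\sing$ sends an acyclic Serre-plus-Reedy fibration to an acyclic Kan fibration --- is genuinely available: it is the ``(trivial)'' case of \cref{T:sing of WSR fib}, combined with \cref{P:trivserre}. But there is a real gap exactly where you file things under ``a secondary point to pin down'': the category-of-fibrant-objects axioms. The path object $\stack{Y}^{|\Delta^1|}$ is a mapping stack, and nothing in this paper constructs it, shows it is a Serre \emph{topological} stack, or shows that evaluation at the endpoints is a Serre fibration and a Reedy fibration; the study of mapping stacks is explicitly deferred to the sequel, so without further work there is no mapping cocylinder and Ken Brown's argument does not start. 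A second failure of the axioms is that not every Serre stack is Reedy fibrant --- only that each is \emph{equivalent} to a Reedy fibrant one (\cref{C:SX_Kan}, via \cref{P:reedy_replace}) --- so ``every object is fibrant'' is false for your proposed fibration class and an additional replacement step must be interposed. Finally, an $n$-simplex of $\sing(\stack{X})$ is not ``essentially a morphism out of $|\Delta^n|$'' but an $n$-string of composable $2$-isomorphisms between such morphisms (an $n$-simplex of the nerve of $\stack{X}(|\Delta^n|)$); you correctly anticipate that coherently assembling lifts is the technical heart, and that assembly is precisely the content of \cref{S:Lifting} and \cref{S:Kan}, which you would be re-deriving rather than citing.

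The paper's actual proof sidesteps path objects entirely and is worth contrasting with your plan. Given a weak equivalence $f:\stack{X}\to\stack{Y}$, one chooses a classifying atlas $\psi:Y\to\stack{Y}$ and then a classifying atlas $X\to\stack{X}\ttimes_{\stack{Y}}Y$, producing a $2$-commutative square whose vertical maps $\varphi:X\to\stack{X}$ and $\psi$ are trivial \emph{weak} Serre fibrations from topological spaces and whose top map $f':X\to Y$ is a weak equivalence of spaces. The only nonclassical input is then \cref{P:classifying_we}: after a Reedy replacement, $\sing(\varphi)$ is a weak trivial Kan fibration by \cref{T:sing of WSR fib}, and a separate elementary argument (\cref{L:trivialgamma}) shows such a map is a weak equivalence under the relevant Kan-ness hypotheses. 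Two-out-of-three then finishes. Note that classifying atlases are only trivial \emph{weak} Serre fibrations, which is why the paper must pass through the weak-Kan-fibration formalism rather than the honest trivial fibrations your factorization would supply --- but in exchange it needs no path objects, no closure of the fibration class under base change, and no fibrancy of all objects.
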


In particular, $\sing(\stack{X})$ has the same homotopy type as the classifying 
space $\operatorname{CS}(\stack{X})$. Somewhat  surprisingly,
the proof of the above theorem is highly nontrivial.

\medskip\noindent
{\bf Effect on fibrations of topological stacks.} It is well known that for a 
Serre fibration $f : X \to Y$  of topological spaces, the induced map 
$\sing(f) : \sing(X) \to \sing(Y)$ is a Kan fibration of simplicial sets. 
The corresponding statement for topological stacks, however, should be formulated 
more carefully, as there are various notions of fibrations between topological 
stacks. For example, the above statement would clearly be false if  we use  the 
notion of Serre fibration for topological stack as in (\cite{No14}, Definition 3.6), 
because this notion is ``intrinsic'' (i.e., is invariant under replacing a stack 
by an equivalent stack -- in particular,  any equivalence of topological stacks 
$f : \stack{X} \to \stack{Y}$, such as the inclusion of a point into a trivial 
groupoid, is automatically a Serre fibration).

It turns out, the correct condition on a morphism $f : \stack{X} \to \stack{Y}$ 
to ensure that $\sing(f) : \sing(\stack{X}) \to \sing(\stack{Y})$ is a Kan fibration 
is that $f$ is a Serre fibration and also a Reedy fibration (\cref{D:reedy_cond}). 
This is another main result of the paper (\cref{T:sing of WSR fib}).

\begin{theorem}
\label{T:main2}
 Let $p : \stack{X}\to \stack{Y}$  be a morphism of Serre topological stacks 
 that is a (weak) Serre fibration and also a Reedy fibration. Then,
 $\sing(p):\sing(\stack{X}) \to \sing(\stack{Y})$ is a (weak) Kan fibration.
\end{theorem}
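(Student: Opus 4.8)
The plan is to transport the statement, along a presentation of $p$, into a question about bisimplicial sets, and then to invoke the standard fact that the diagonal of a Reedy fibration of bisimplicial sets is a Kan fibration.

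First I would produce a presentation of $p$ by a strict morphism of simplicial topological spaces. Choosing an atlas $Y_0\to\stack{Y}$ and an atlas $U\to\stack{X}$, the fibre product $X_0:=U\times_{\stack{Y}}Y_0$ is in fact a topological space (since $X_0\to U$ is a base change of the representable morphism $Y_0\to\stack{Y}$) and $X_0\to\stack{X}$ is again an atlas. Taking \v{C}ech nerves, $X_n:=X_0\times_{\stack{X}}\cdots\times_{\stack{X}}X_0$ and $Y_n:=Y_0\times_{\stack{Y}}\cdots\times_{\stack{Y}}Y_0$, yields a morphism of simplicial topological spaces $X_\bullet\to Y_\bullet$ covering $p$. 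By the construction of $\sing$ on topological stacks (and the identification, underlying \cref{T:main1}, of $\sing(\stack{X})$ for such a presentation with the diagonal $\Diag\sing(X_\bullet)$ of the bisimplicial set obtained by applying $\sing$ levelwise), the map $\sing(p)$ is identified — on the nose in the non-weak case, and up to weak equivalence in the weak case — with $\Diag$ of the map $\sing(X_\bullet)\to\sing(Y_\bullet)$.

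Next I would unwind the hypotheses into a statement about $X_\bullet\to Y_\bullet$. The assumptions that $p$ is a (weak) Serre fibration and a Reedy fibration (\cref{D:reedy_cond}), together with $\stack{X}$ and $\stack{Y}$ being Serre stacks, are precisely what is needed to arrange that $X_\bullet\to Y_\bullet$ is a (weak) Reedy fibration of simplicial spaces: every matching map $X_n\to Y_n\times_{M_nY_\bullet}M_nX_\bullet$ is a (weak) Serre fibration of topological spaces (here $M_n$ is the matching object, a finite limit of the $X_k$ with $k<n$; for a \v{C}ech nerve it is an explicit iterated fibre product, and only $n\le 2$ impose nontrivial conditions). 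In particular $X_\bullet\to Y_\bullet$ is then also levelwise a (weak) Serre fibration. One must also check that this description does not depend on the chosen atlases, which follows from the cofinality and invariance results used in the proof of \cref{T:main1}. Then, applying $\sing$ levelwise: since $\sing$ is right adjoint to geometric realization it preserves all limits, so $\sing(M_nX_\bullet)=M_n\sing(X_\bullet)$ and it carries the matching pullbacks to pullbacks of simplicial sets; and $\sing$ sends (weak) Serre fibrations to (weak) Kan fibrations. Hence $\sing(X_\bullet)\to\sing(Y_\bullet)$ is a (weak) Reedy fibration of bisimplicial sets (viewed as simplicial objects in $\sset$ with the Kan--Quillen structure; note each $\sing(X_n)$ is a Kan complex). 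It then remains to invoke the standard fact that $\Diag\colon\bsset\to\sset$ takes Reedy fibrations to Kan fibrations — equivalently, that $\Diag$ is right Quillen for the Reedy model structure — to conclude that $\sing(p)=\Diag\bigl(\sing(X_\bullet)\to\sing(Y_\bullet)\bigr)$ is a (weak) Kan fibration.

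I expect the main obstacle to be the second step: pinning down exactly what combination of \cref{D:reedy_cond}, the (weak) Serre fibration condition on $p$, and the Serre-stack hypotheses on $\stack{X}$ and $\stack{Y}$ yields a genuine Reedy fibration of simplicial spaces at the level of presentations, identifying the matching objects of the \v{C}ech nerves as the claimed iterated fibre products, and verifying all of this independently of the chosen atlases; the "weak" variant also requires matching the up-to-weak-equivalence identification of $\sing(p)$ with what a weak Kan fibration is asked to be. By contrast, once the problem has been transported to bisimplicial sets the two remaining inputs ($\sing$ preserves limits and sends Serre fibrations to Kan fibrations; $\Diag$ preserves Reedy fibrations) are formal and standard.
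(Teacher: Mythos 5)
There is a genuine gap --- in fact two --- and together they undercut the whole strategy. First, the identification of $\sing(p)$ with $\Diag$ of the levelwise singular complex of a \v{C}ech nerve is not available. In this paper $\sing(\stack{X})$ is \emph{defined} as $\Diag(N(\stack{X}\Del))$, built from the groupoids of $|\Delta^n|$-points (\cref{D:sing}); it is not computed from an atlas. The comparison with a groupoid presentation (\cref{C:we_classifying_altas}) is only a weak equivalence, and it is deduced from \cref{T:preserves we}, which is itself proved using the theorem you are trying to prove --- so invoking ``the identification underlying \cref{T:main1}'' is circular. Worse, even granting the comparison, being a Kan fibration is not invariant under replacing a map by a weakly equivalent one, so an identification ``up to weak equivalence'' cannot transport the conclusion; your parenthetical ``on the nose in the non-weak case'' is simply false for the paper's $\sing$. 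Relatedly, the Reedy condition of \cref{D:reedy_cond} is a condition on $\stack{X}\Del\to\stack{Y}\Del$, i.e.\ on $|\Delta^n|$-points, and has no bearing on the matching maps of a \v{C}ech nerve of atlases, so the translation in your second step is unjustified as well.

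Second, the concluding ``standard fact'' is false: the diagonal of a Reedy fibration of bisimplicial sets need not be a Kan fibration. For a counterexample take $X_{m,n}=B_m$ for an arbitrary non-Kan simplicial set $B$; each $X_{m,*}$ is discrete, so every matching map is a Kan fibration and $X$ is Reedy fibrant as a simplicial object in $\sset$, yet $\Diag X=B$. What is true --- and what the paper proves as \cref{L:jardine_lemma}, generalizing Goerss--Jardine IV.4.8 --- is that $\Diag(f)$ is a (weak) Kan fibration when $f$ is a Reedy fibration \emph{and} each transverse slice $f_{*,n}$ has the requisite (weak) lifting property. Your construction supplies fibration data only in the one bisimplicial direction (the \v{C}ech/nerve direction), so this transverse hypothesis is never verified. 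In the paper's proof of \cref{T:sing of WSR fib} that transverse condition is exactly the statement that each row map $R_m(\stack{X})\to R_m(\stack{Y})$ (tuples of composable morphisms in $\stack{X}(|\Delta^\bullet|)$) is a (weak) Kan fibration; establishing this is \cref{L:ob and mor kan}, which rests on the strictification and lifting lemmas of \cref{S:Lifting} and the gluing property of \cref{C:fakification2}, and is the technical heart of the argument. That entire step is missing from your proposal.
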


We point out that the Reedy condition can always be arranged for any morphism 
of stacks: given  $f : \stack{X} \to \stack{Y}$ we can  replace $\stack{X}$ 
by an equivalent stack $\stack{X}'$ such that the corresponding morphism 
$f' : \stack{X}' \to \stack{Y}$ is a Reedy fibration (\cref{P:reedy_replace}). 
Such a replacement would not affect the property of being a Serre fibration.

\medskip
The paper is organized as follows. In  Section \ref{S:TopSt}, we set up 
the terminology and review some generalities about stacks and  topological stacks. 
In Section \ref{S:Tilde}, we introduce the \emph{tilde} construction. 
This is the left Kan extension along the inclusion $\Deltacat \to \topspace$ and 
plays a crucial role in the rest of the paper. In Sections \ref{S:Homotopy}-\ref{S:Lemmas} 
we review some basic facts about homotopy of maps between morphisms of stacks. 
We also recall the relevant background on fibrations of stacks. 
The only new notion in this section is that of a 
\emph{restricted} homotopy (\ref{SS:restricted}) which is related to the 
tilde construction introduced in Section \ref{S:Tilde}.

In Section \ref{S:ModelStr} we look at various model structures on 
the categories of groupoids, presheaves of groupoids and simplicial groupoids, 
and establish some of their properties which are, presumably, well known but which
we have been unable to locate 
 in the literature. The notion of Reedy fibration of stacks (\cref{D:reedy_cond}) 
introduced and studied in this section is central to the paper. 
It is an adaptation of Reedy fibration of simplicial groupoids.

We introduce the functor $\sing : \topstack \to \sset$ in  
Section \ref{S:Sing}. Section \ref{S:Lifting} is the technical heart of the paper 
where we prove a list of lemmas which play key role in the proofs of our main results. 
In Section \ref{S:Kan} we prove the first main result of the paper, namely, that if 
$f : \stack{X} \to \stack{Y}$ is both a Serre and  a Reedy fibration, 
then $\sing(f)$ is a Kan fibration (\cref{T:sing of WSR fib}).

In  Section \ref{S:WE} we use the results of \cref{S:Kan} to prove the 
second main result of the paper, namely, that $\sing$ preserves weak equivalences 
(\cref{T:preserves we}). In particular, this implies that the (singular simplicial set of the) 
classifying space of a topological stack $\stack{X}$ is naturally weakly 
homotopy equivalent to $\sing(\stack{X})$, see \cref{P:classifying_we}.

In the subsequent paper on the subject we study the adjunction between
$\sing$ and geometric realization, as well as the effect of the functor
$\sing$ on the totalization
of cosimplicial stacks. We use these results to study singular chains 
on mapping stacks. 

\subsubsection*{Acknowledgement}
We would like to thank the referees for reading the paper with meticulous
care and suggesting 
numerous corrections and improvements which significantly enhanced the quality of the paper.

\section{Notation and terminology}
\label{Setup}

\subsection{Stacks and Yoneda}
We will use the rather unconventional approach of working with \emph{presheaves of groupoids} 
rather than \emph{categories fibered in groupoids}. We use calligraphic symbols
$\stack{X}$, $\stack{Y}$, etc. for 
presheaves of groupoids.

We often regard a topological space $X$ as a stack via Yoneda embedding. We use the same
notation $X$ for the functor represented by $X$.

\subsection{Strict versus 2-categorical limits}
When we talk about (co)limits  in a 2-category $\category{C}$
we always mean the \emph{strict} ones. 
Otherwise, we call them 2-categorical (co)limits, or 2-(co)limits.

In particular, for (presheaves of) groupoids $\stack{X}$, $\stack{Y}$ and $\stack{Z}$, we denote their 
\[\text{strict fiber product by }  \stack{X}\times_{\stack{Y}}\stack{Z}\] 
and their 
\[\text{2-fiber product by } \stack{X}\ttimes_{\stack{Y}}\stack{Z}.\] 
The notation $\stack{X}\cong\stack{Y}$ means an isomorphism of (presheaves of) groupoids, 
and $\stack{X}\sim\stack{Y}$ means an equivalence of (presheaves of) groupoids.

\subsection{Composition of morphisms in categories}
We use functional notation $g\circ f$ for composition of 1-morphisms $f : X \to Y$ and $g : Y \to Z$, 
and multiplicative notation $\alpha\cdot\beta$ (or simply $\alpha\beta$) for composition of 
2-isomorphisms $\alpha : f \Rightarrow g$ and $\beta : g \Rightarrow h$. 
We use the notation $h \circ \alpha$ for the composition of a 2-isomorphism 
$\alpha : f \Rightarrow g$ between $f,g : X \to Y$ with a morphism $h : Y \to Z$.

\subsection{Categories of interest}
We  usually use the notation $[\category{C},\category{D}]$ for functor categories. 
We will be working with the following categories:
\begin{itemize}
\item[-] $\topspace$, the category of compactly generated Hausdorff spaces;
\item[-] $\gpd$, the category of small groupoids;
\item[-] $\presheafset$, the category of presheaves of sets over $\topspace$;
\item[-] $\presheafgpd$, the category of presheaves of groupoids over $\topspace$;
\item[-] $\sset$, the category of simplicial sets;
\item[-] $\sgpd$, the category of simplicial groupoids;
\item[-] $\bsset$, the category of bisimplicial sets.
\end{itemize}

Note that $\gpd$, $\presheafgpd$ and  $\sgpd$ carry a 2-category structure; we will
use the same notation for the corresponding 2-categories.

\subsection{Simplicial sets}
The category of finite ordinal numbers with order preserving maps between them 
is denoted by $\Deltacat$. The simplicial $n$-simplex is denoted by $\Delta^n:=\Hom_{\Deltacat}(-,[n])$. 
The topological $n$-simplex is denoted
by 	
\[
		|\Delta^n| = \{(x_0,x_1,\dots,x_n)\in \mathbb{R}^{n+1} :  \sum_{i=0}^nx_i=1,x_i\geq0 \}
\]
We denote the cosimplicial object $n \mapsto |\Delta^n|$ in $\topspace$ by $|\Delta^{\bullet}|$. 
The $k^{\text{th}}$ horn in $\Delta^n$, namely,  the sub-simplicial set of $\Delta^n$ 
generated by the  $i^{\text{th}}$ faces of the unique non-degenerate $n$-cell in 
$\Delta^n$, $i\in\{0,1,\cdots,\hat{k},\cdots,n\}$, is denoted  by $\Lambda^n_k$. 
When talking about homotopies between maps we often use the
notation $[0,1]$ instead of $|\Delta^1|$.

The bisimplex $\Delta^{m,n}$ is the bisimplicial set
$\Delta^{m,n}:\Deltacat^{\text{op}} \times \Deltacat^{\text{op}} \to \set$
represented by $([m],[n]) \in \Deltacat \times \Deltacat$. That is,
$\Delta^{m,n}:=\Hom_{\Deltacat \times \Deltacat}(-,([m],[n]))= \Delta^{m}\boxtimes\Delta^{n}$
(see \cref{SS:diagonal}).

For a simplicial set $X \in \sset$, we use the notation
$\tilde{X} \in \presheafgpd$ for the left Kan extension
of $X$ along $\Deltacat \to \topspace$ (more details can be found in \cref{S:Tilde}).

\section{Topological stacks}
\label{S:TopSt}

Throughout the paper, we will work over the base Grothendieck site  $\topspace$    
of  compactly generated Hausdorff topological spaces (with the open-cover topology). 
We will use the rather unconventional approach of working with \emph{presheaves of 
groupoids} rather than \emph{categories fibered in groupoids} (there is a natural
strictification functor from the latter to the former). The equivalence of this approach 
with Grothendieck's approach via fibered categories has been worked out in 
\cite{Hollander08} (also see \cref{SS:2catstacks} below).

\subsection{Presheaves of groupoids}
We denote the 2-category of presheaves of groupoids over $\topspace$ by 
$\presheafgpd=[\topspace^\text{op},\category{Gpd}]$. 
We shall denote the objects of this category with calligraphic letters, 
i.e., $\stack{X} \in \presheafgpd$. For $T \in  \topspace$ 
we call $\stack{X}(T)$ the \emph{groupoid of $T$-points} of $\stack{X}$.

By an \emph{equivalence} of presheaves of groupoids we mean a morphism 
$f : \stack{X}  \to \stack{Y}$ such that for every $T \in  \topspace$, 
the induced map $f(T) : \stack{X}(T)  \to \stack{Y}(T)$ on the $T$-points 
is an equivalence of groupoids. Two presheaves of groupoids are \emph{equivalent} 
if there exists a zigzag of equivalences between them.

\subsection{Yoneda}
Let $\presheafset = [\topspace\op,\set]$ be the category of presheaves of sets over 
the category $\topspace$ of topological spaces. Regarding a set  as a
a groupoid in which 
the only morphisms are the identity morphisms,
we identify $\presheafset$
with a full subcategory $\presheafgpd$ of 
the category of presheaves of groupoids over 
the category $\topspace$.

The Yoneda functor  $\topspace \to \presheafset$ (or $\topspace \to \presheafgpd$)
sends a topological space $X$ to the functor $\Hom_{\topspace}(-,X)$
represented by $X$. This identifies $\topspace$ with a full subcategory $\presheafset$ 
(or $\presheafgpd$). More precisely, we have the following.

\begin{lemma}{\label{L:Yoneda}}
  Let $X$ be a topological space and $\stack{Y}$ a presheaf of groupoids. Then, there is 
  a natural 
  isomorphism of groupoids $\Hom_{\presheafgpd}(X,\stack{Y})\cong \stack{Y}(X)$.
\end{lemma}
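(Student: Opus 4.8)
The plan is to run the $2$-categorical Yoneda argument, keeping careful track of the $2$-categorical structure: recall that $\Hom_{\presheafgpd}(X,\stack{Y})$ is the \emph{groupoid} whose objects are the morphisms $X \to \stack{Y}$ of presheaves of groupoids and whose arrows are the $2$-morphisms (the modifications) between them. First I would unwind this data in the case at hand, where the source is the representable presheaf $X = \Hom_{\topspace}(-,X)$, which takes values in \emph{discrete} groupoids. A morphism $f \colon X \to \stack{Y}$ then amounts to a choice, for every $T \in \topspace$ and every $g \colon T \to X$, of an object $f_T(g) \in \stack{Y}(T)$, subject to the naturality identity $\stack{Y}(h)\bigl(f_T(g)\bigr) = f_{T'}(g \circ h)$ for all $h \colon T' \to T$; and a $2$-morphism $\alpha \colon f \Rightarrow f'$ amounts, likewise, to a choice of morphism $\alpha_T(g) \colon f_T(g) \to f'_T(g)$ in $\stack{Y}(T)$ compatible with all the $\stack{Y}(h)$. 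Since every $\stack{Y}(T)$ is a groupoid, each such $\alpha$ is automatically invertible, so $\Hom_{\presheafgpd}(X,\stack{Y})$ is indeed a groupoid.

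Next I would introduce the ``evaluation at the identity'' functor
\[
  \Phi \colon \Hom_{\presheafgpd}(X,\stack{Y}) \longrightarrow \stack{Y}(X), \qquad f \longmapsto f_X(\id_X), \quad \alpha \longmapsto \alpha_X(\id_X),
\]
which is functorial because the vertical composition of $2$-morphisms is computed componentwise. In the other direction, define $\Psi \colon \stack{Y}(X) \to \Hom_{\presheafgpd}(X,\stack{Y})$ by sending an object $y \in \stack{Y}(X)$ to the morphism $\Psi(y)$ with $\Psi(y)_T(g) := \stack{Y}(g)(y)$, and a morphism $\beta \colon y \to y'$ to the $2$-morphism with component $\stack{Y}(g)(\beta)$ at the index $(T,g)$. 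Checking that $\Psi(y)$ and $\Psi(\beta)$ satisfy the naturality identities recorded above reduces to the contravariant functoriality $\stack{Y}(g \circ h) = \stack{Y}(h) \circ \stack{Y}(g)$ of the presheaf $\stack{Y}$; this is the one place where we use that $\stack{Y}$ is a genuine functor $\topspace\op \to \gpd$, so that this is an equality rather than merely a coherent isomorphism.

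It then remains to check that $\Phi$ and $\Psi$ are mutually inverse. One composite is immediate: $\Phi(\Psi(y)) = \stack{Y}(\id_X)(y) = y$, and similarly on morphisms. For the other, given $f \colon X \to \stack{Y}$ the naturality identity for $f$ applied to $g \colon T \to X$ (written as $g = \id_X \circ g$) gives $\stack{Y}(g)\bigl(f_X(\id_X)\bigr) = f_T(g)$, so $\Psi(\Phi(f)) = f$ on the nose, and the same computation on $2$-morphisms gives $\Psi(\Phi(\alpha)) = \alpha$; hence $\Phi$ is an isomorphism of groupoids. Finally I would record that $\Phi$ is natural in $\stack{Y}$ --- for $\varphi \colon \stack{Y} \to \stack{Y}'$ both ways around the relevant square send $f$ to $\varphi_X\bigl(f_X(\id_X)\bigr)$ --- and, if desired, in $X$. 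The argument presents no genuine obstacle; it is simply the Yoneda lemma, and the only point that demands care is the $2$-categorical bookkeeping, in particular identifying the $2$-morphisms of $\Hom_{\presheafgpd}(X,\stack{Y})$ correctly and using the strictness of $\stack{Y}$ as a presheaf to obtain an honest isomorphism rather than an equivalence, as the statement ``$\cong$'' requires.
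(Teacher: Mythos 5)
Your proof is correct and is exactly the standard $\gpd$-enriched Yoneda argument (evaluation at $\id_X$, with inverse $y \mapsto \bigl(g \mapsto \stack{Y}(g)(y)\bigr)$, strict functoriality of $\stack{Y}$ giving an isomorphism on the nose); the paper states this lemma without proof, and this is evidently the intended argument. The bookkeeping of $2$-morphisms and the remark on strictness are both handled correctly.
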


As  in the above lemma, we often abuse notation and use 
the same notation both for $X\in \topspace$ and for the image of $X$ in 
$\presheafset$ (or $\presheafgpd$) under the Yoneda functor.

The Yoneda embedding preserves fiber products (in fact, all limits), but it 
seldom preserves colimits. If a presheaf $\stack{X}$ is equivalent to $\Hom_{\topspace}(-,X)$, 
for some $X \in \topspace$, we often abuse terminology and say that $\stack{X}$ is 
a topological space.

\subsection{Stacks}{\label{SS:2catstacks}}
Following (\cite{Hollander08}, Definition 1.3) we define a 
\emph{stack} over $\topspace$  to be a presheaf of groupoids 
$\stack{X}\in \presheafgpd$ that satisfies the descent condition
\[
\begin{tikzpicture}
	
	\node (a) at (0,0) {$\stack{X}(T)$};
	\node (b) at (3,0) {$\text{holim} \Big ( \prod \stack{X}(U_i)$};
	\node (c) at (6,0) {$\prod \stack{X}(U_{ij})$};
	\node (d) at (9,0) {$\prod \stack{X}(U_{ijk})\Big )$};
	
	\draw[->] (a) to  node[above] {$\sim$} (b);
	
	\draw[->,transform canvas={yshift=0.5ex}] (b) to (c);
	\draw[->,transform canvas={yshift=-0.5ex}] (b) to (c);
	
	\draw[->,transform canvas={yshift=0.8ex}] (c) to (d);
	\draw[->] (c) to (d);
	\draw[->,transform canvas={yshift=-0.8ex}] (c) to (d);
	
\end{tikzpicture}
\]
for every $T \in \topspace$ and every open cover $\{U_i\}$ of $T$. Morphisms and 
2-isomorphisms of stacks are the ones of the underlying presheaves of groupoids. 
That is, stacks form a full sub-2-category of $\presheafgpd$.

As shown in \cite{Hollander08}, 
the presheaf approach to stacks  is equivalent
to the approach via categories fibered in groupoids. Let us elaborate on this.
The projective model structure on (the 1-category underlying) 
$\presheafgpd$ 
 is Quillen equivalent to the projective model structure (in the sense of
 \cite{Hollander08}, Theorem 4.2) on 
(the 1-category of) categories fibered in groupoids
over $\topspace$; see \cite{Hollander08}, Corollary 4.3.
The underlying Quillen adjunction is defined as follows: to any category
$\stack{C}$ fibered in groupoids over $\topspace$ we associate the presheaf of groupoids 
  \[T \in \topspace, \ \  T \mapsto \Hom_{\operatorname{FibCat}}(T,\stack{C})  \ \in \gpd.\]
The left adjoint to this functor is given by the Grothendieck construction. 

 This Quillen equivalence gives rise to a Quillen 
equivalence between the localizations of both model categories 
with respect to hypercovers;
see \cite{Hollander08}, Corollary 4.5. The fibrant
objects in either of these localized model categories are called stacks.

\subsection{Topological stacks}
By a \emph{topological stack} we mean a  stack over $\topspace$ 
which is equivalent to the quotient stack of a topological groupoid 
$\mathbb{X}=[R\rightrightarrows X]$, with $R$ and $X$ topological spaces.  
A topological stack is \emph{Serre} if it has a groupoid presentation such that 
$s : R \to X$ is  locally (on source and target) a Serre fibration. 
That is, for every $y \in R$, $s$ is a Serre fibration from a neighborhood of $y$ to a
neighborhood of $s(y)$.

Morphisms and 2-isomorphisms of topological stacks are the ones of the underlying 
presheaves of groupoids, so topological stacks, as well as Serre topological stacks, 
form a full sub-2-category of $\presheafgpd$. We denote the 2-category of topological 
stacks by $\topstack$.

\subsection{Strict and 2-categorical fiber products}
\label{SS:strict}
Consider the following diagram in the 2-category $\gpd$ of groupoids:
\[
\begin{tikzpicture}[scale=1]

	\node (b) at (0,0) {$H$};
	\node (c) at (2,2) {$K$};
	\node (d) at (2,0) {$G$};
	
	\draw[->] (b) to node[below] {q} (d);
	\draw[->] (c) to node[right] {p} (d);

\end{tikzpicture}
\]
Recall that the \emph{2-fiber product} (or \emph{2-categorical fiber product}) 
  \[H\ttimes_G K\]
has objects triples $(x,y,\varphi)$, where $x$ is an  object in $H$, $y$ is an object 
in  $K$ and $\varphi : q(x) \to p(y)$ is a morphism in $G$. A morphism from
$(x,y,\varphi)$ to $(x',y',\varphi')$ is a pair of morphisms $\alpha :x \to x'$ and 
$\beta : y \to y'$, in $H$ and $K$ respectively, such that 
$\varphi'\circ q(\alpha) =p(\beta)\circ \varphi$.

There is a fully faithful functor 
       \[H\times_G K \to H\ttimes_G K\]
from the strict fiber product to the 2-fiber product, sending a  pair 
$(x,y) \in  H\times_G K$ to the triple $(x,y,\id)$. The image consists of those
triples  $(x,y,\varphi)$ with $\varphi=\id$.
This map is sometimes an equivalence
(Lemma \ref{L:strictvs2-categorical}) but not always.

The strict and 2-categorical product are defined objectwise for presheaves of groupoids,
namely
    \[(\stack{X}\times_{\stack{Z}}\stack{Y})(T)=\stack{X}(T)\times_{\stack{Z}(T)}\stack{Y}(T),
   \ \  \forall T \in \topspace\]
 and  
\[(\stack{X}\ttimes_{\stack{Z}}\stack{Y})(T):=\stack{X}(T)\ttimes_{\stack{Z}(T)}\stack{Y}(T),
   \ \  \forall T \in \topspace.\]

\begin{lemma} 
The 2-categories of stacks,  topological stacks and Serre topological stacks are all closed under 2-fiber products (in fact, all finite 2-limits), and these are computed as presheaves of groupoids.
\end{lemma}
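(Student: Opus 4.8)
The plan is to verify the three closure claims in order, reducing everything to the groupoid case and then invoking the objectwise definition of the 2-fiber product. Since the 2-categorical fiber product of presheaves of groupoids is computed objectwise (as recalled just before the statement), and since 2-limits in $\presheafgpd$ are likewise computed objectwise, the entire statement follows once we know (a) that the 2-fiber product of presheaves of groupoids that happen to be \emph{stacks} is again a stack, and (b) that if $\stack{X}$, $\stack{Y}$, $\stack{Z}$ are topological (resp.\ Serre topological) stacks, then $\stack{X}\ttimes_{\stack{Z}}\stack{Y}$ admits a topological groupoid presentation (resp.\ one whose source map is locally a Serre fibration).

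For part (a), I would argue that the descent condition is preserved under 2-limits of presheaves of groupoids. The descent condition as stated in \ref{SS:2catstacks} says that $\stack{X}(T) \to \operatorname{holim}\big(\prod\stack{X}(U_i) \rightrightarrows \prod\stack{X}(U_{ij}) \Rrightarrow \prod\stack{X}(U_{ijk})\big)$ is an equivalence of groupoids for every cover. Both the formation of a 2-fiber product (or finite 2-limit) and the formation of the homotopy limit over the \v{C}ech nerve commute with each other up to equivalence, because finite 2-limits commute with the relevant homotopy limits in $\gpd$; concretely, if each of $\stack{X}, \stack{Y}, \stack{Z}$ satisfies descent then on $T$-points we get $(\stack{X}\ttimes_{\stack{Z}}\stack{Y})(T) \simeq \stack{X}(T)\ttimes_{\stack{Z}(T)}\stack{Y}(T) \simeq (\operatorname{holim}\cdots) \ttimes_{(\operatorname{holim}\cdots)} (\operatorname{holim}\cdots) \simeq \operatorname{holim}\big((\stack{X}\ttimes_{\stack{Z}}\stack{Y})(U_\bullet)\big)$. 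Alternatively one may cite the model-categorical formulation from \cite{Hollander08} recalled at the end of \ref{SS:2catstacks}: the stacks are the fibrant objects of the hypercover-localized projective model structure on $\presheafgpd$, and the 2-fiber product of fibrant objects over a fibrant base, being a homotopy pullback computed by a fibration, is again fibrant.

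For part (b), given presentations $\mathbb{X} = [R \rightrightarrows X]$, $\mathbb{Y} = [S \rightrightarrows Y]$, $\mathbb{Z} = [Q \rightrightarrows Z]$ with atlases $X \to \stack{X}$, etc., I would exhibit an explicit topological groupoid presenting $\stack{X}\ttimes_{\stack{Z}}\stack{Y}$: one takes the fiber product atlas built from $X \times_{\stack{Z}} Y$, whose space of objects is a strict fiber product of the spaces $X$, $Z \times Z$ (for the isomorphism data), $Y$ over $Z \times Z$, and similarly for morphisms — all ordinary fiber products of topological spaces, which stay in $\topspace$ since $\topspace$ is closed under finite limits. This is the standard description of the fiber product of topological stacks (cf.\ the treatment in the references cited in the introduction), and it shows the 2-fiber product is again a topological stack. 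For the Serre refinement, I would check that if the source maps of the presentations of $\stack{X}$ and $\stack{Y}$ are locally Serre fibrations and we may also assume this of $\stack{Z}$, then the source map of the fiber-product groupoid is built from these by pullbacks and compositions, and local Serre fibrations are stable under base change and composition; so the resulting presentation is again of Serre type. The main obstacle is bookkeeping in part (b): one must be careful that the chosen atlas for the 2-fiber product is genuinely a topological groupoid (that all the spaces involved are honest fiber products in $\topspace$ and that source/target are the expected maps), and in the Serre case that "locally on source and target a Serre fibration" really is preserved — the latter because the property is local one can reduce to genuine Serre fibrations and then use their stability under pullback and composition.
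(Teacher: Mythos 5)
Your proposal is correct and follows essentially the same route as the paper: for the stack condition the paper likewise invokes the commutation of homotopy limits with 2-fiber products, and for the topological and Serre cases it cites the explicit groupoid presentation of the fiber product constructed in \cite{No05} (page 30), which is exactly the atlas-by-fiber-products argument you sketch. You have simply filled in details that the paper delegates to its references.
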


\begin{proof}
In the case of stacks this is well known (homotopy limit commutes with 
2-fiber product). For the other two cases see (\cite{No05}, page 30) for the 
construction of a groupoid presentation for $\stack{X}\times_{\stack{Z}}\stack{Y}$ 
out of those for  $\stack{X}$, $\stack{Y}$ and $\stack{Z}$. 
\end{proof}

\begin{rem}
The reason for using the nonstandard notation $\ttimes$ is that in this paper we will 
mostly be using \emph{strict} fiber products of (presheaves of) groupoids  and we need to
distinguish between the two notions.
\end{rem}

%

\subsection{Classifying spaces for  topological stacks}

The following theorem has been proven in (\cite{No14}, Corollary 3.17).
\begin{theorem}
\label{T:classifying}
Let $\stack{X}$ be a topological stack. Then, there exists 	an atlas 
$\varphi :X \to \stack{X}$ that is a trivial weak  Serre fibration. This means that, 
for any map from a topological space $T$, the fiber product
\[
\begin{tikzpicture}[scale=1.5]

	\node (a) at (0,1) {$X \ttimes_{\stack{X}} T$};
	\node (b) at (0,0) {$X$};
	\node (c) at (2,1) {$T$};
	\node (d) at (2,0) {$\stack{X}$};
	
	\draw[->] (a) to node[left] {} (b);
	\draw[->] (a) to node[above] {} (c);
	\draw[->] (b) to node[below] {$\varphi$} (d);
	\draw[->] (c) to node[right] {} (d);

\end{tikzpicture}
\]
has the property that $X \ttimes_{\stack{X}} T \to T$ is a trivial  weak Serre 
fibration of topological spaces (in particular, a weak homotopy equivalence).
\end{theorem}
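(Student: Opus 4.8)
The statement is extracted from \cite{No14}; below is the line of attack one would take. The plan is to build the atlas by a bar construction. First I would fix a groupoid presentation $\mathbb{X}=[R\rightrightarrows X]$ of $\stack{X}$, which exists by the definition of a topological stack, and form its nerve $N_\bullet\mathbb{X}$: the simplicial object of $\topspace$ with $N_0=X$, $N_1=R$, and $N_n$ the space of strings of $n$ composable arrows, with the usual faces and degeneracies. The candidate atlas is the \emph{fat} geometric realization $\varphi\colon\|N_\bullet\mathbb{X}\|\to\stack{X}$, the fat realization being chosen precisely so that no well-pointedness hypothesis on the presentation is needed. The morphism $\varphi$ is built by observing that $\|N_\bullet\mathbb{X}\|$ is glued from the pieces $N_n\mathbb{X}\times|\Delta^n|$, each of which maps to $\stack{X}$ via the ``first vertex'' map $N_n\mathbb{X}\to X$ followed by $X\to\stack{X}$, with the identifications along faces compatible with the $2$-isomorphisms of $\stack{X}$ coming from composition in $\mathbb{X}$; one checks that these data glue to a well-defined morphism, independent of the presentation up to equivalence. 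Since $\|N_\bullet\mathbb{X}\|$ is an honest space, $\varphi$ is representable.

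Next I would identify the relevant fibre products. Given $g\colon T\to\stack{X}$ from a space $T$, the torsor dictionary presents $g$ by a principal $\mathbb{X}$-bundle $\pi\colon P\to T$: a space $P$ with a momentum map $P\to X$ and a right $\mathbb{X}$-action, with $\pi$ admitting local sections and the anchor map $P\times_X R\to P\times_T P$ a homeomorphism. Comparing universal properties (carefully distinguishing $\ttimes$ from $\times$, and using that fat realization commutes with the pullbacks occurring here in the convenient category $\topspace$) yields
\[
 \|N_\bullet\mathbb{X}\|\ttimes_{\stack{X}}T\ \simeq\ \big\|\,[n]\mapsto P\times_X(R\times_X\cdots\times_X R)\,\big\|\ =\ \|N_\bullet(\mathbb{X}\ltimes P)\|,
\]
the fat realization of the nerve of the action groupoid of $\mathbb{X}$ on $P$, under which the projection to $T$ is the one induced levelwise by $\pi$. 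So it remains to prove that $q\colon\|N_\bullet(\mathbb{X}\ltimes P)\|\to T$ is a trivial weak Serre fibration of spaces.

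For the \emph{trivial} part: over a point $t\in T$ the fibre of $q$ is $\|N_\bullet(\mathbb{X}\ltimes P_t)\|$ with $P_t=\pi^{-1}(t)$, and the anchor condition restricted to the fibre says that $\mathbb{X}\ltimes P_t$ is isomorphic to the pair groupoid on $P_t$; its nerve is the \v{C}ech nerve of $P_t\to\ast$, which carries an extra degeneracy as soon as $P_t$ is nonempty, so its fat realization is weakly contractible. Thus all fibres of $q$ are weakly contractible. For the fibration part I would argue locally: over an open $U\subseteq T$ on which $\pi$ has a section, $g|_U$ lifts to the atlas $X\to\stack{X}$, so $\|N_\bullet\mathbb{X}\|\ttimes_{\stack{X}}U$ is pulled back from $\|N_\bullet\mathbb{X}\|\ttimes_{\stack{X}}X$; this reduces the claim to the explicit simplicial space over $X$ attached to the ``trivial'' torsor $s\colon R\to X$, where one invokes the fact that the fat realization of a levelwise (weak) Serre fibration of simplicial spaces is again a weak Serre fibration. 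Combining weak contractibility of the fibres with the fibration property (via the long exact sequence / quasifibration argument, noting $q$ is surjective since already $\pi$ is) shows $q$ is a weak homotopy equivalence; in particular $\varphi$ is an atlas and a trivial weak Serre fibration, as desired.

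The hard part will be this last step: pinning down the notion of \emph{weak} Serre fibration and showing that fat realization turns a levelwise (weak) Serre fibration of simplicial spaces into one. The fat realization is used exactly to avoid cofibrancy/well-pointedness assumptions on $\mathbb{X}$, but a Reedy-type gluing argument, together with the good formal properties of the category of compactly generated Hausdorff spaces, is still needed, and this is the technical core of \cite{No14}. A secondary, essentially bookkeeping, difficulty is setting up the torsor dictionary, the identification of $\|N_\bullet\mathbb{X}\|\ttimes_{\stack{X}}T$, and the well-definedness of $\varphi$ carefully within the $2$-categorical framework.
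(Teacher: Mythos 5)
The paper does not prove this statement itself --- it quotes it from (\cite{No14}, Corollary 3.17) --- and your overall architecture (take the fat realization $\|N_\bullet\mathbb{X}\|$ of the nerve of a presentation, build $\varphi$ from the first-vertex maps, and identify $\|N_\bullet\mathbb{X}\|\ttimes_{\stack{X}}T$ with $\|N_\bullet(\mathbb{X}\ltimes P)\|$ for the torsor $P\to T$ classifying $T\to\stack{X}$) is exactly the one used there and in \cite{No12}. The gap is in the step you yourself flag as the technical core, and it is not merely technical. The theorem is asserted for an arbitrary topological stack, so there is no fibrancy hypothesis anywhere: $s\colon R\to X$ need not be a Serre fibration, and the torsor $\pi\colon P\to T$ only admits local sections. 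Hence the simplicial space $N_\bullet(\mathbb{X}\ltimes P)\to T$, whose level $n$ is $P\times_X R\times_X\cdots\times_X R\to T$ mapping through $\pi$, is \emph{not} levelwise a (weak) Serre fibration, and the lemma you want to invoke --- that fat realization sends levelwise fibrations to fibrations --- has nothing to apply to. A concrete failure: let $\stack{X}=T$ be a space presented by the \v{C}ech groupoid of an open cover $\{U_i\}$; then $P=\coprod U_i\to T$ already fails the weak covering homotopy property (its fibres are discrete and no component surjects), yet the theorem is true for this $\varphi$. The same problem defeats your local reduction: over an open where $\pi$ has a section the local model $\|N_\bullet(\mathbb{X}\ltimes R)\|\to X$ is not a fibration either but only a \emph{shrinkable} map (it has a section coming from the unit $e\colon X\to R$ and a fibrewise simplicial contraction onto it), and ``locally a weak Serre fibration on the base implies weak Serre fibration'' is not the classical local-to-global statement for Serre fibrations; the Dold-type numerable gluing it would require is precisely the missing ingredient.

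The proof in \cite{No14} replaces your fibration step by showing $\varphi$ is \emph{parashrinkable}: for paracompact $T$ one chooses a partition of unity subordinate to a cover over which $\pi$ has sections and writes down a Milnor-type section $T\to\|N_\bullet(\mathbb{X}\ltimes P)\|$ (barycentric coordinates from the partition of unity, simplex entries from the local sections) together with an explicit fibrewise deformation of the identity onto its image. A shrinkable map over $T$ satisfies the WLLP for every finite CW inclusion essentially by definition (lift by composing with the section; the fibrewise contraction supplies the required fibrewise homotopy), and arbitrary $T$ is handled by pulling a finite CW pair $(B,A)\to T$ back to the paracompact space $B$. Your step (a), weak contractibility of the fibres via the extra degeneracy on the pair groupoid of $P_t$, is correct but then no longer needed as a separate input. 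I would rewrite the fibration step along these lines rather than trying to realize a levelwise fibration.
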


See \cref{D:WTS_fib} for the general definition of trivial weak  Serre fibration, 
bearing in mind that the definition simplifies considerably in the case of 
topological spaces.

We call a map $\varphi :X \to \stack{X}$ as in \cref{T:classifying} a 
\emph{classifying atlas} for $\stack{X}$. Note that in the definition of 
classifying atlas given in \cite{No12} we only require $\varphi :X \to \stack{X}$ to be a
universal weak equivalence. The definition we are using here is stronger.

The $n^{\text{th}}$ \emph{homotopy group} (set if $n=0$) of a pointed topological 
stack $(\stack{X},x)$ is defined (\cite{No14}, Section 5) to be the group 
$\pi_n(\stack{X},x) = [(S^n,s_0),(\stack{X},x)]$ of homotopy  classes of pointed 
maps. Equivalently, it can be defined to be the homotopy group $\pi_n(X,x')$ of a 
classifying atlas $X$ for $\stack{X}$ at some lift $x'$ of $x$ to $X$. This 
definition is independent of the choice of $X$ and $x'$ (up to a natural isomorphism).

A morphism of topological stacks $f:\stack{X} \to \stack{Y}$ is called a 
\emph{weak equivalence} if it induces isomorphisms 
$f_*:\pi_n(\stack{X},x) \to \pi_n(\stack{Y},y)$  for all choices of basepoint 
and all $n\geq 0$.

\section{The tilde construction}
\label{S:Tilde}
Consider the inclusion $\Deltacat \to \topspace$, $[n] \mapsto |\Delta^n|$. 
Left Kan extension along this inclusion gives rise to a functor
\begin{align*}
  \sset & \to \presheafset \ \ (\hookrightarrow \presheafgpd) \\
   A & \mapsto \tilde{A}
\end{align*}
which is uniquely determined by the property that it preserves colimits
and sends $\Delta^n$ to $|\Delta^n|$ (rather, the presheaf represented by it).
It is left adjoint to the restriction functor
\begin{align*}
	-\Del : \presheafset &\to \sset \ \ (\hookrightarrow \sgpd) \\
	X &\mapsto X \Del =
	\Hom_{\presheafset}(|\Delta^{\bullet}|,X).
\end{align*}

More explicitly, $\tilde{A}$ is constructed exactly like the colimit   
construction of the geometric realization of $A$, except that instead of 
using the topological simplices $|\Delta^n|$  as building blocks
we use the presheaves in $\presheafset$ represented by them.

We have a natural map
    \begin{equation}\label{Eq:psi} \psi_A : \tilde{A} \to |A|.    
    \end{equation}
This is adjoint to the map $A \to \sing(|A|)=|A|\Del$, the unit of the adjunction 
$| - |:\topspace\rightleftharpoons\sset: \sing$. Note that the Yoneda embedding 
$\topspace \to \presheafset$   (or $\presheafgpd$) does not necessarily preserve 
colimits, so $\psi_A$ is often not an isomorphism (but it is
when $A=\Delta^n$).

%

\begin{rem}
 The standard notation in the literature for the 
 restriction functor along $\Deltacat \to \topspace$ (rather, along
 $i : \Deltacat^{\text{op}} \to \topspace^{\text{op}}$)  and its left adjoint, 
 the left Kan extension,  are $i^*$ and $i_!$, respectively. Our 
 choice of the alternative notation $( )_{\Delta}$ and $()^{\sim}$
 is only to reduce the burden of notation and enhance readability of the long formulas
 we will encounter.
\end{rem}

\begin{ex}
\label{E:Lambda}
	Write $\Lambda^n_k$ as the coequalizer of
	\begin{align*}
		\coprod_{0\leq i<j\leq n} \Delta^{n-2}
		\rightrightarrows \coprod_{i\in\{0,1,\dots,n\},i\neq k} \Delta^{n-1} \to \Lambda^n_k
	\end{align*}
	Then, we can write $\tilde{\Lambda}^n_k$ as the coequalizer
	\begin{align*}
		\coprod_{0\leq i<j\leq n} |\Delta^{n-2}| \rightrightarrows
		\coprod_{i\in\{0,1,\dots,n\},i\neq k} |\Delta^{n-1}|
	\end{align*}
	in $\presheafset$.
	The map $\psi_{{\Lambda}^n_k} :\tilde{\Lambda}^n_k \to |\Lambda^n_k|$ is almost never
	an isomorphism.
\end{ex}
	
We can extend the restriction functor $-\Del$ defined above
to $\presheafgpd$:
\begin{align*}
	-\Del : \presheafgpd &\to \sgpd,\\
	\stack{X} &\mapsto \stack{X} \Del = \Hom_{\presheafgpd}(|\Delta^{\bullet}|,\stack{X}).
\end{align*}
We have the following lemma.

\begin{lemma}
\label{L:restriction isom}
Let $A$ be a simplicial set and  $\stack{X}$ a presheaf of groupoids. Then, 	
we have an isomorphism (and not just an equivalence) of groupoids
	\begin{align*}
		\Hom_{\presheafgpd}(\tilde{A},\stack{X}) & \stackrel{\cong}{\longrightarrow}
		\Hom_{\sgpd}(A,\stack{X}\Del),  \\
		f & \mapsto f\Del\circ \iota_A.
	\end{align*}
Here, $\iota_A: A \to \tilde{A}\Del$ is the unit of adjunction.
In particular, we have the following natural isomorphisms
\[
\begin{tikzpicture}[scale=1]

	\node (a) at (0,0) {$\Hom_{\presheafgpd}(|\Delta^n|,\stack{X})$};
	\node (b) at (6,0) {$\Hom_{\sgpd}(\Delta^n,\stack{X}_{\Del})$};
	\node (c) at (3,-2) {$\stack{X}(|\Delta^n|)$};
	
	\draw[->] (a) to  node[above] {$\cong$} (b);
	\draw[->] (a) to  node[below] {$\cong$} (c);
	\draw[->] (b) to  node[below] {$\cong$} (c);

\end{tikzpicture}
\]	
	
\end{lemma}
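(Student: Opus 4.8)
The plan is to deduce the lemma from the ordinary (non-groupoidal) adjunction $\tilde{(-)} : \sset \rightleftharpoons \presheafset : (-)\Del$ established above, by means of some internal-groupoid bookkeeping. Recall that a presheaf of groupoids $\stack{X}$ is the same datum as an internal groupoid $\bigl(\stack{X}^{\mor} \rightrightarrows \stack{X}^{\ob}\bigr)$ in $\presheafset$, with its source, target, unit, composition and inversion maps; that a morphism of presheaves of groupoids is an internal functor and a $2$-morphism an internal natural transformation; and that the same description holds for $\sgpd$ over $\sset$. The consequence I will use is that, for a presheaf of \emph{sets} $B$ viewed as a discrete presheaf of groupoids, the groupoid $\Hom_{\presheafgpd}(B,\stack{X})$ has object set $\Hom_{\presheafset}(B,\stack{X}^{\ob})$ and morphism set $\Hom_{\presheafset}(B,\stack{X}^{\mor})$, with all structure maps induced from those of $\stack{X}$; and symmetrically, $\Hom_{\sgpd}(A,\stack{Y})$ for a simplicial set $A$ and simplicial groupoid $\stack{Y}$ is built from $\Hom_{\sset}(A,\stack{Y}^{\ob})$ and $\Hom_{\sset}(A,\stack{Y}^{\mor})$.

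Two compatibilities make the reduction go through. First, the functor $(-)\Del$ commutes with passage to the presheaves of objects and of morphisms: since $(\stack{X}\Del)_n = \stack{X}(|\Delta^n|)$ levelwise and the object- and morphism-set functors commute with evaluation, one has $(\stack{X}\Del)^{\ob} = (\stack{X}^{\ob})\Del$ and $(\stack{X}\Del)^{\mor} = (\stack{X}^{\mor})\Del$ as simplicial sets, compatibly with all internal-groupoid structure maps. Second, the map $\iota_A : A \to (\tilde{A})\Del$ occurring in the statement is precisely the unit of the ordinary adjunction $\tilde{(-)} \dashv (-)\Del$ between $\sset$ and $\presheafset$; both of these are presheaves of sets, so no groupoidal ambiguity arises.

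Now apply the adjunction bijection $\Hom_{\presheafset}(\tilde{A},-) \cong \Hom_{\sset}(A,(-)\Del)$ separately to $\stack{X}^{\ob}$ and to $\stack{X}^{\mor}$. By the first compatibility this yields bijections $\Hom_{\presheafset}(\tilde{A},\stack{X}^{\ob}) \cong \Hom_{\sset}(A,(\stack{X}\Del)^{\ob})$ and $\Hom_{\presheafset}(\tilde{A},\stack{X}^{\mor}) \cong \Hom_{\sset}(A,(\stack{X}\Del)^{\mor})$, which, being natural in their arguments, are compatible with the source, target, unit, composition and inversion maps on either side. Assembling the object-level and morphism-level bijections produces an isomorphism of groupoids $\Hom_{\presheafgpd}(\tilde{A},\stack{X}) \cong \Hom_{\sgpd}(A,\stack{X}\Del)$, and tracing the second compatibility through the construction identifies it with the assignment $f \mapsto f\Del \circ \iota_A$; naturality in $A$ and in $\stack{X}$ is inherited from that of the ordinary adjunction. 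For the displayed triangle one specializes to $A = \Delta^n$: there $\tilde{\Delta}^n$ is the presheaf represented by $|\Delta^n|$ (the defining property of $\tilde{(-)}$ on representables, and the reason $\psi_{\Delta^n}$ is an isomorphism), so the left edge is \cref{L:Yoneda}, the right edge is the simplicial Yoneda lemma applied to $(\stack{X}\Del)^{\ob}$ and $(\stack{X}\Del)^{\mor}$, and commutativity comes down to the routine fact that $\iota_{\Delta^n}$ carries the universal $n$-simplex $\id_{[n]}$ to $\id_{|\Delta^n|} \in \Hom_{\topspace}(|\Delta^n|,|\Delta^n|) = \bigl((\tilde{\Delta}^n)\Del\bigr)_n$.

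I do not expect a serious obstacle. The one point that needs care is that the construction $\tilde{(-)}$ lives over $\presheafset$ while the hom we must compute is the groupoid-valued hom of $\presheafgpd$, so a bare ``colimit-preserving functors send colimits to limits'' argument is not quite available at the level of hom-groupoids; the internal-groupoid reformulation above is exactly what sidesteps this. It also has the virtue of producing an honest \emph{isomorphism} of groupoids rather than merely an equivalence --- the content of the parenthetical emphasis in the statement, and something we cannot afford to lose, since strict isomorphisms of this kind will be used repeatedly below.
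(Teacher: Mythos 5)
Your proposal is correct and follows essentially the same route as the paper: reduce to the set-level adjunction $\tilde{(-)} \dashv (-)\Del$ by viewing $\stack{X}$ as a groupoid object in $\presheafset$ and applying the bijection to $\ob(\stack{X})$ and $\mor(\stack{X})$. The extra bookkeeping you supply (compatibility of $(-)\Del$ with $\ob$ and $\mor$, identification of the unit) is exactly what the paper's terser proof leaves implicit.
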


\begin{proof}
In the case where $\stack{X}$ is a presheaf of sets, i.e., $\stack{X} \in \presheafset$, 
this is just the  left adjointness of the left Kan extension. For the general case 
view $\stack{X}$ as a groupoid object in 
$\presheafset$ and apply the above isomorphisms to $\ob(\stack{X})$ and $\mor(\stack{X}) \in \presheafset$.
\end{proof}

\section{Yoneda and colimits}
As we pointed out in the previous section, unless $A$ is representable, the natural map 
$\psi_A : \tilde{A}\to |A|$  is not in general an isomorphism of presheaves of sets. 
This is due to the fact that the Yoneda functor 
$\topspace\to \presheafset$ (or $\topspace\to \presheafgpd$) does not preserve colimits.

\medskip

In certain situations, however, we have the following partial result.

\begin{lemma}
\label{L:fakification}
Let $\stack{X}$ be a Serre topological stack. Let $A\hookrightarrow B$ and 
$A\hookrightarrow C$ be  	closed embeddings of topological spaces. Assume both maps
are locally trivial Serre cofibrations. Then, the map
	\[
		\Hom_{\presheafgpd}(B \stackvee_{A} C,\stack{X})
		\to \Hom_{\presheafgpd}(B{\coprod_{A}}' C,\stack{X})
	\]
induced by the natural map $B\coprod'_{A} C \to B\stackvee_{A} C$ is an 
equivalence of groupoids.  Here, $\stackvee$ stands for colimit in 
$\topspace$ and $\coprod'$ stands for colimit in $\presheafset$
(which is the same as colimit in $\presheafgpd$).
\end{lemma}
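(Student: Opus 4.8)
The plan is to reduce the statement to a statement about spaces of maps into the topological groupoid presenting $\stack{X}$, and then to a local statement about Serre fibrations that can be checked by hand using the lifting/extension properties of trivial Serre cofibrations. First I would fix a groupoid presentation $\mathbb{X}=[R\rightrightarrows X]$ of $\stack{X}$ in which the source map $s\colon R\to X$ is locally (on source and target) a Serre fibration, which exists by the definition of a Serre stack. Since $B$, $C$, and the two colimits $B\stackvee_A C$ and $B\coprod'_A C$ are all honest topological spaces (the pushout in $\topspace$ along a closed embedding is again compactly generated Hausdorff, and the two pushouts have the same underlying set, differing only possibly in topology — in fact here, because $A\hookrightarrow B$ and $A\hookrightarrow C$ are closed embeddings, the pushout in $\topspace$ and the pushout in $\presheafset$ agree on underlying sets, so the comparison map $B\coprod'_A C\to B\stackvee_A C$ is a continuous bijection, but we do \emph{not} want to assume it is a homeomorphism). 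The groupoid $\Hom_{\presheafgpd}(T,\stack{X})$ for a space $T$ can be described, up to equivalence, as the groupoid of $\mathbb{X}$-torsors on $T$, or more concretely via descent: objects are given by an open cover, maps $U_i\to X$, and gluing data valued in $R$ over the overlaps, with morphisms the obvious coboundaries. So I need to compare descent data on $B\stackvee_A C$ with descent data on $B\coprod'_A C$.

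The key step is the following local reduction. A map from $B\stackvee_A C$ (resp. $B\coprod'_A C$) to $\stack{X}$ is, by descent, assembled out of maps to $X$ on pieces of an open cover together with $R$-valued transition data. On the locus away from $A$ the two spaces are locally homeomorphic, so the only issue is near the image of $A$. Near a point of $A$, using that $A\hookrightarrow B$ is a locally trivial Serre cofibration, one can choose a neighborhood on which the inclusion $A\cap V \hookrightarrow B\cap V$ is, up to the relevant local triviality, a cofibration with the homotopy extension property; similarly for $C$. Then the comparison of maps into $\stack{X}$ — which locally is the comparison of maps into $X$ together with lifting of $R$-valued data — becomes a question of extending and comparing maps along a (locally trivial) Serre cofibration into a space where $s\colon R\to X$ is a (local) Serre fibration. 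Here the crucial inputs are: (i) every map from $B$ (resp. $C$) agreeing with a given map on $A$ can be produced, because the local triviality of $s$ supplies the homotopy lifting needed to upgrade a set-theoretic map to a continuous one across the glued locus; and (ii) any two such agreeing maps are connected by a unique (up to the relevant 2-cells) homotopy, again by the Serre fibration property. This gives essential surjectivity and full faithfulness of the comparison functor, i.e. an equivalence of groupoids. I would organize this as: (1) full faithfulness, handled by a relative lifting argument for a single homotopy $A\times[0,1]\cup (B\coprod C)\times\{0\}$ style problem; (2) essential surjectivity, handled by first solving the problem on a cover and then patching, using \cref{L:restriction isom}-type bookkeeping to keep track of the $R$-valued data.

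The main obstacle I expect is precisely the topology-vs-set subtlety at the glued locus: the comparison map $B\coprod'_A C\to B\stackvee_A C$ is generally \emph{not} a homeomorphism, so one cannot simply say ``the two mapping groupoids are literally equal.'' The content of the lemma is that maps into a \emph{Serre} stack cannot detect the difference, and making this precise requires genuinely using that $s$ is locally a Serre fibration together with the hypothesis that $A\hookrightarrow B$ and $A\hookrightarrow C$ are \emph{locally trivial Serre cofibrations} (not just closed cofibrations). Concretely, the hard part is showing that a continuous map on $B\coprod'_A C$ — which is continuous on $B$ and on $C$ separately but whose continuity across $A$ is the weaker ``pushout-in-$\presheafset$'' continuity — can be rectified, after passing to a suitable open cover and using $R$-valued transition functions, to something continuous in the finer $\topspace$-topology; this is where the local Serre fibration structure on $\mathbb{X}$ does the real work, essentially by a relative homotopy lifting argument near $A$. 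I would isolate this into one technical sublemma about extending/correcting maps across a locally trivial Serre cofibration when the target admits a local Serre fibration atlas, and then the equivalence of groupoids follows formally.
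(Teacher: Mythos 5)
There is a genuine gap, and it starts at the very first step: $B{\coprod_{A}}' C$ is \emph{not} a topological space with ``the same underlying set and a possibly different topology.'' The colimit in $\presheafset$ is computed objectwise, so a $T$-point of $B{\coprod_{A}}' C$ is an element of the set-pushout $\Hom(T,B)\coprod_{\Hom(T,A)}\Hom(T,C)$, i.e.\ a map $T\to B$ or a map $T\to C$ (identified when they factor through $A$); this presheaf is almost never representable, precisely because Yoneda does not preserve colimits (a point the paper stresses). Consequently your framing of the difficulty --- ``rectifying a map that is continuous in the weaker pushout-in-$\presheafset$ topology to one continuous in the $\topspace$-topology'' --- does not correspond to anything: a morphism $B{\coprod_{A}}' C\to\stack{X}$ is, by the universal property of the colimit, exactly a strict pair
\[
\Hom_{\presheafgpd}(B{\coprod_{A}}' C,\stack{X})\;\cong\;\Hom_{\presheafgpd}(B,\stack{X})\times_{\Hom_{\presheafgpd}(A,\stack{X})}\Hom_{\presheafgpd}(C,\stack{X}),
\]
with no ``weak continuity across $A$'' to repair. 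If $\stack{X}$ were a space the lemma would be a tautology (the topological pushout's universal property glues the pair); the entire content is stacky, namely that a pair of maps to $\stack{X}$, given by descent data on $B$ and on $C$ agreeing on $A$, glues along the \emph{closed} (non-open, hence non-descent) decomposition $B\stackvee_A C=B\cup C$.

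That gluing statement is exactly where the Serre-cofibration/Serre-fibration interplay enters, and it is the one piece your plan leaves as an unproved ``technical sublemma.'' The paper does not reprove it either: it quotes the gluing lemma (\cite{BGNX}, Proposition 1.3, in its Serre version), which says that the composite
\[
\Hom_{\presheafgpd}(B\stackvee_A C,\stack{X})\longrightarrow \Hom_{\presheafgpd}(B,\stack{X})\ttimes_{\Hom_{\presheafgpd}(A,\stack{X})}\Hom_{\presheafgpd}(C,\stack{X})
\]
into the \emph{2-}fiber product is an equivalence, and then observes that the strict fiber product sits inside the 2-fiber product fully faithfully, so both factors of the composite are equivalences. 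Your local analysis near $A$ (using local triviality of the cofibrations and the local Serre fibration $s\colon R\to X$) is in the spirit of how that cited result is proved, but as written it rests on the misidentification of the presheaf pushout and never formulates the actual statement to be proved (gluing of descent data along a closed decomposition, with the 2-isomorphism on $A$ absorbed). To repair the proposal you would need to (i) replace the ``two topologies'' picture by the strict-fiber-product description above, and (ii) either prove or cite the gluing lemma for Serre stacks; the remaining bookkeeping (full faithfulness of strict into 2-fiber product plus two-out-of-three) is then formal.
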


\begin{proof}
This is an easy consequence of (\cite{BGNX}, Proposition 1.3). Note that  
(\cite{BGNX}, Proposition 1.3) is proved for Hurewicz stacks. The 
proof for the case of  Serre topological stacks is entirely similar;
see (\cite{No05}, Proposition 16.1 and Theorem 16.2) for more details.

To prove the lemma, note that the groupoid 
\[\Hom_{\presheafgpd}(B{\coprod_{A}}' C,\stack{X})\cong
\Hom_{\presheafgpd}(B,\stack{X})\times_{\Hom_{\presheafgpd}(A,\stack{X})}
   \Hom_{\presheafgpd}(C,\stack{X})\]
can be identified with the full subgroupoid of the groupoid
  \[\Hom_{\presheafgpd}(B,\stack{X})\ttimes_{\Hom_{\presheafgpd}(A,\stack{X})}
   \Hom_{\presheafgpd}(C,\stack{X}) \]
consisting of those triples $(f,g,\varphi)$,
  \[ f : B\to \stack{X}, \  g : G\to \stack{X}, \ \varphi : f|_A \Rightarrow g|_A, \]
for which $f|_A=g|_A$ and $\varphi=\id$. 
The composition  
  \begin{multline*}   
   \Hom_{\presheafgpd}(B \stackvee_{A} C,\stack{X})  
      \to \Hom_{\presheafgpd}(B{\coprod_{A}}' C,\stack{X}) \\
     \hookrightarrow  \Hom_{\presheafgpd}(B,\stack{X})\ttimes_{\Hom_{\presheafgpd}(A,\stack{X})}
   \Hom_{\presheafgpd}(C,\stack{X}) 
   \end{multline*}  
is an equivalence of groupoids by (the Serre version) of  (\cite{BGNX}, Proposition 1.3). 
Since the second functor is fully faithful, it follows that both functors are
equivalences of groupoids.  
\end{proof}

\begin{rem}{\label{R:strictvsweak}}
In the course of the proof of the above lemma we have also shown that the natural map
  \begin{multline*} 
  \Hom_{\presheafgpd}(B,\stack{X})\times_{\Hom_{\presheafgpd}(A,\stack{X})}
   \Hom_{\presheafgpd}(C,\stack{X})  \\
   \hookrightarrow
   \Hom_{\presheafgpd}(B,\stack{X})\ttimes_{\Hom_{\presheafgpd}(A,\stack{X})}
   \Hom_{\presheafgpd}(C,\stack{X}) 
    \end{multline*}  
 is an equivalence of groupoids. In other words, the strict and the  2-fiber product are
 equivalent.  
\end{rem}

\begin{defn}{\label{D:glue}}
 We say that a simplicial set $A$ has the {\em gluing property} with respect to a presheaf
 of groupoids $\stack{X}$ if the map
    \begin{align*}
		\Sigma_{A,\stack{X}} : \Hom_{\presheafgpd}(|A|,\stack{X})
		& \to \Hom_{\presheafgpd}(\tilde{A},\stack{X})\\
		f &\mapsto  f\circ \psi_A
	\end{align*}
is an equivalence of groupoids. 
\end{defn}

\begin{lemma}
\label{L:fakification0}
The simplicial $n$-simplex $\Delta^n$ has the gluing property with respect to any 
presheaf of groupoids $\stack{X}$.
\end{lemma}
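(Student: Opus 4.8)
The plan is to reduce the statement to the remark made just after \eqref{Eq:psi}, namely that the comparison map $\psi_{\Delta^n}\colon \tilde{\Delta}^n \to |\Delta^n|$ is an actual isomorphism of presheaves (not merely a weak equivalence). Once this is known, the map $\Sigma_{\Delta^n,\stack{X}}$ of \cref{D:glue} is simply precomposition with an isomorphism, $\Sigma_{\Delta^n,\stack{X}}=(\psi_{\Delta^n})^{\ast}$, hence is an isomorphism of groupoids and in particular an equivalence. So the only thing that really needs an argument is that $\psi_{\Delta^n}$ is an isomorphism.

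For that I would unwind the colimit description of the tilde construction recalled after \eqref{Eq:psi}: $\tilde{A}$ is the colimit in $\presheafset$ of the diagram $(\Delta^m\to A)\mapsto\Hom_{\topspace}(-,|\Delta^m|)$ over the category of simplices of $A$, while $|A|$ is the colimit in $\topspace$ of the diagram $(\Delta^m\to A)\mapsto|\Delta^m|$ over the same category; the arrow $\psi_A$ is the canonical comparison that exists because the Yoneda embedding $\topspace\to\presheafset$ sends the defining cocone of $|A|$ to a cocone under the diagram computing $\tilde{A}$. When $A=\Delta^n$ the indexing category has a terminal object, the identity $\id\colon\Delta^n\to\Delta^n$, so both colimits are computed at that object: $\tilde{\Delta}^n\cong\Hom_{\topspace}(-,|\Delta^n|)$ and $|\Delta^n|$ (viewed as a presheaf) is that same representable, and under these identifications $\psi_{\Delta^n}$ is the identity map.

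A cleaner, more formal alternative is to combine \cref{L:restriction isom} with \cref{L:Yoneda}: the former provides a natural isomorphism $\Hom_{\presheafgpd}(\tilde{\Delta}^n,\stack{X})\cong\stack{X}(|\Delta^n|)$ and the latter gives $\Hom_{\presheafgpd}(|\Delta^n|,\stack{X})\cong\stack{X}(|\Delta^n|)$; one then checks that $\Sigma_{\Delta^n,\stack{X}}$ is compatible with these two identifications and is therefore an isomorphism. The check is a routine chase of units of adjunction, using that $\psi_{\Delta^n}$ corresponds, under the adjunction between the tilde functor and $(-)_{\Deltacat}$, to the canonical map $\Delta^n\to\sing|\Delta^n|$, which sends the tautological $n$-simplex to $\id_{|\Delta^n|}$.

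I do not anticipate a genuine obstacle here: the content is essentially that the tilde construction agrees with geometric realization on representables, which holds by construction. The one point requiring care is bookkeeping---verifying that the isomorphism $\tilde{\Delta}^n\cong|\Delta^n|$ one writes down is literally $\psi_{\Delta^n}$, and that $\Sigma_{\Delta^n,\stack{X}}$ is the corresponding precomposition map---but this follows at once from the adjunction characterisation of $\psi$.
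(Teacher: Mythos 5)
Your proposal is correct and follows essentially the same route as the paper, whose proof simply observes that $\psi_{\Delta^n}$ is an isomorphism (a fact already noted after \eqref{Eq:psi}) so that $\Sigma_{\Delta^n,\stack{X}}$ is precomposition with an isomorphism and hence an isomorphism of groupoids. Your additional justification of why $\psi_{\Delta^n}$ is an isomorphism (via the terminal object of the category of simplices, or via the adjunction) is a correct filling-in of a detail the paper takes for granted.
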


\begin{proof}
This follows from the fact that $\psi_A : |A| \to \tilde{A}$ is an isomorphism when
$A=\Delta^n$. In fact, in this case
the maps $\Sigma_{\Delta^n,\stack{X}}$ are isomorphisms of groupoids.
\end{proof}

\begin{lemma}
\label{L:fakification1}
Let $A\hookrightarrow B$ and 
$A\hookrightarrow C$ be monomorphisms of simplicial sets. If $A$, $B$ and $C$ have the gluing 
property with respect to a Serre topological stack $\stack{X}$, then so does
$B \stackvee_{A} C$.
\end{lemma}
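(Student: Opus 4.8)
The plan is to reduce the gluing property for $B\stackvee_A C$ to a comparison of two pushouts, one taken in $\topspace$ and one in $\presheafset$, and then to feed that comparison into \cref{L:fakification}. First I would record that both the tilde construction and geometric realization preserve colimits: $\widetilde{(-)}$ by construction (it is a left Kan extension, hence a left adjoint by \cref{L:restriction isom}), and $|-|$ because it too is a left adjoint. Hence from the pushout square $A\hookrightarrow B$, $A\hookrightarrow C$ in $\sset$ we get a pushout square
\[
\widetilde{B}\stackvee_{\widetilde{A}}\widetilde{C}\;\cong\;\widetilde{B\stackvee_A C}
\]
in $\presheafset$ (colimit there being the same as in $\presheafgpd$), and likewise $|B|\coprod_{|A|}|C|\cong|B\stackvee_A C|$ in $\topspace$. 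The natural transformation $\psi$ is compatible with these colimits, so $\psi_{B\stackvee_A C}$ is obtained by gluing $\psi_A,\psi_B,\psi_C$.

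Next I would apply $\Hom_{\presheafgpd}(-,\stack{X})$. On the target side, $\Hom_{\presheafgpd}(\widetilde{B\stackvee_A C},\stack{X})$ is the strict fiber product $\Hom(\widetilde{B},\stack{X})\times_{\Hom(\widetilde{A},\stack{X})}\Hom(\widetilde{C},\stack{X})$, since $\Hom(-,\stack{X})$ turns the pushout in $\presheafgpd$ into a strict pullback of groupoids. On the source side I must be more careful: $|A|\to|B|$ and $|A|\to|C|$ are closed embeddings which are locally trivial Serre cofibrations (this is where the hypotheses that $A\hookrightarrow B$, $A\hookrightarrow C$ are monomorphisms of simplicial sets enters, via the CW structure on realizations), so \cref{L:fakification} applies and gives that
\[
\Hom_{\presheafgpd}\!\big(|B|\stackvee_{|A|}|C|,\stack{X}\big)\longrightarrow
\Hom_{\presheafgpd}\!\big(|B|{\coprod_{|A|}}'|C|,\stack{X}\big)
\]
is an equivalence of groupoids, the latter being the strict fiber product $\Hom(|B|,\stack{X})\times_{\Hom(|A|,\stack{X})}\Hom(|C|,\stack{X})$. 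Now $\Sigma_{B\stackvee_A C,\stack{X}}$ factors as this equivalence followed by the map of strict fiber products induced by $\Sigma_{A,\stack{X}},\Sigma_{B,\stack{X}},\Sigma_{C,\stack{X}}$.

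It then remains to show that a map of strict fiber products of groupoids induced by a levelwise equivalence is an equivalence, which is false in general — and this is the main obstacle. The resolution is \cref{R:strictvsweak}: for the spans built from closed embeddings that are locally trivial Serre cofibrations, the strict fiber product agrees with the 2-fiber product (both on the $|-|$ side and, after noting $\widetilde{A}\to\widetilde{B}$, $\widetilde{A}\to\widetilde{C}$ are again closed embeddings of the relevant type since they are built from realizations via the same colimit, on the $\widetilde{(-)}$ side). Since a levelwise equivalence of spans does induce an equivalence on 2-fiber products, and since $\Sigma_{A,\stack{X}},\Sigma_{B,\stack{X}},\Sigma_{C,\stack{X}}$ are equivalences by the gluing hypothesis on $A,B,C$, we conclude that the induced map on strict fiber products is an equivalence. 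Chasing the factorization back, $\Sigma_{B\stackvee_A C,\stack{X}}$ is an equivalence, i.e.\ $B\stackvee_A C$ has the gluing property with respect to $\stack{X}$. The one point needing a little care, besides the strict-versus-2-categorical comparison, is verifying that $\widetilde{A}\hookrightarrow\widetilde{B}$ and $\widetilde{A}\hookrightarrow\widetilde{C}$ satisfy the hypotheses of \cref{L:fakification} and \cref{R:strictvsweak}; this follows because $\widetilde{(-)}$ sends the simplicial monomorphisms to pushouts of coproducts of maps $|\Delta^{n-1}|\hookrightarrow|\Delta^n|$ glued along lower simplices, exactly as in \cref{E:Lambda}, and such maps are closed embeddings and locally trivial Serre cofibrations.
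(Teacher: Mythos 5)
Your route is essentially the paper's: both proofs use that $|-|$ and $\widetilde{(\,\cdot\,)}$ preserve pushouts, identify $\Hom_{\presheafgpd}(\widetilde{B\stackvee_A C},\stack{X})$ with a strict fiber product, invoke \cref{L:fakification} and \cref{R:strictvsweak} on the topological side, and transport the equivalences $\Sigma_{A,\stack{X}},\Sigma_{B,\stack{X}},\Sigma_{C,\stack{X}}$ through 2-fiber products, where levelwise equivalences behave well. You also correctly isolate the real difficulty (strict fiber products of groupoids are not invariant under levelwise equivalence). The gap is in your resolution of that difficulty: you apply \cref{R:strictvsweak} on the tilde side by asserting that $\tilde{A}\to\tilde{B}$ and $\tilde{A}\to\tilde{C}$ are ``closed embeddings which are locally trivial Serre cofibrations.'' That is a category error. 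The objects $\tilde{A},\tilde{B},\tilde{C}$ are presheaves of sets on $\topspace$ and are in general not representable --- this is exactly the content of \cref{E:Lambda}, where $\tilde{\Lambda}^n_k\neq|\Lambda^n_k|$ --- so the hypotheses of \cref{L:fakification} and \cref{R:strictvsweak}, which concern maps of \emph{topological spaces} and rest on a homotopy-pushout statement in $\topspace$, do not apply to them; being a colimit of realizations \emph{in $\presheafset$} does not produce a space, precisely because Yoneda does not preserve colimits. Independently of this, the assertion that strict and 2-fiber products agree on the tilde side amounts to $\Hom_{\presheafgpd}(\tilde{B},\stack{X})\to\Hom_{\presheafgpd}(\tilde{A},\stack{X})$ being an isofibration (\cref{L:strictvs2-categorical}), which is the Reedy-type condition of \cref{P:reedy cond2} and is not among the hypotheses here.

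The step is, however, unnecessary, and this is how the paper closes the argument: on the tilde side one only needs that the strict fiber product sits \emph{fully faithfully} inside the 2-fiber product, which holds for free (\cref{SS:strict}). Your chain of identifications shows that the composite of $\Sigma_{B\stackvee_A C,\stack{X}}$ with that fully faithful inclusion is an equivalence, and a functor whose composite with a fully faithful functor is an equivalence is itself an equivalence. Equivalently, in your formulation: the map of strict fiber products is fully faithful because it is the restriction of the equivalence of 2-fiber products to full subcategories, and it is essentially surjective because, given an object of the strict tilde fiber product, one pulls it back along the equivalence of 2-fiber products, replaces the result by an isomorphic object of the strict $|-|$ fiber product using \cref{R:strictvsweak} \emph{on the source side only}, and notes that the resulting isomorphism in the target 2-fiber product automatically lies in the full subcategory. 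With that adjustment your proof is correct.
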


\begin{proof} 
We have  
 \begin{align*}
 \Hom_{\presheafgpd}(|B \stackvee_{A} C|,\stack{X})
      &  \rcong  \Hom_{\presheafgpd}(|B| \stackvee_{|A|} |C|,\stack{X}) \\
   \tag{\cref{L:fakification}} 
      & \risom   \Hom_{\presheafgpd}(|B| {\coprod_{|A|}}' |C|,\stack{X}) \\     
    \tag{Definition of colimit}  & \rcong \Hom_{\presheafgpd}(|B|,\stack{X})\times_{\Hom_{\presheafgpd}(|A|,\stack{X})}
   \Hom_{\presheafgpd}(|C|,\stack{X})  \\
     \tag{\cref{R:strictvsweak}}  
      & \risom \Hom_{\presheafgpd}(|B|,\stack{X})\ttimes_{\Hom_{\presheafgpd}(|A|,\stack{X})}
   \Hom_{\presheafgpd}(|C|,\stack{X})  \\
   \tag{Assumption}     
      &    \risom \Hom_{\presheafgpd}(\tilde{B},\stack{X})
        \ttimes_{\Hom_{\presheafgpd}(\tilde{A},\stack{X})}
   \Hom_{\presheafgpd}(\tilde{C},\stack{X})  \\
\end{align*}
  Notice that the above equivalence is equal to the following composition:
   \begin{multline*}   
   \Hom_{\presheafgpd}(|B \stackvee_{A}C|,\stack{X})  
      \xrightarrow{\Sigma_{B \stackvee_{A}C,\stack{X}}}
       \Hom_{\presheafgpd}(\widetilde{B \stackvee_{A}C},\stack{X}) \cong \\ 
      \Hom_{\presheafgpd}(\tilde{B} \stackvee_{\tilde{A}}\tilde{C},\stack{X}) 
        \cong\Hom_{\presheafgpd}(\tilde{B},\stack{X})
        \times_{\Hom_{\presheafgpd}(\tilde{A},\stack{X})}
   \Hom_{\presheafgpd}(\tilde{C},\stack{X}) \\
     \hookrightarrow  \Hom_{\presheafgpd}(\tilde{B},\stack{X})
        \ttimes_{\Hom_{\presheafgpd}(\tilde{A},\stack{X})}
   \Hom_{\presheafgpd}(\tilde{C},\stack{X}). 
   \end{multline*}  
Since the last functor is fully faithful and the composition is shown
above to be an equivalence, it follows 
 that $\Sigma_{B \stackvee_{A}C,\stack{X}}$
is also an equivalence.
\end{proof}

Recall that a simplicial set $X$ is called \emph{non-singular}
(\cite{JRW}, Definition 1.2.2) if for every 
non-degenerate $n$-simplex
$x$, the corresponding map $\bar{x} : \Delta^n \to X$ is  a monomorphism. Examples 
we will encounter include
A=$\partial\Delta^n$, $\Lambda^n_k$ and $\Lambda^n_k\times\Delta^1$. Non-singular 
simplicial sets are closed under taking sub-objects and products.

\begin{cor}
\label{C:fakification2}
Let $D$ be a finite non-singular  simplicial set.
Then, $D$ has the gluing property with respect to every Serre topological stack
$\stack{X}$. That is, for every Serre topological stack
$\stack{X}$,
the map $\psi_D : \tilde{D}\to |D|$  induces an equivalence of groupoids
      \begin{align*}
		\Sigma_{D,\stack{X}} :\Hom_{\presheafgpd}(|D|,\stack{X})
		& \risom \Hom_{\presheafgpd}(\tilde{D},\stack{X})\\
		f &\mapsto  f\circ \psi_D.
	\end{align*}
\end{cor}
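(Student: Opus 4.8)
The plan is to build $D$ up from simplices by gluing along boundary inclusions and to invoke the two stability lemmas already established, namely \cref{L:fakification0} (each $\Delta^n$ has the gluing property) and \cref{L:fakification1} (the gluing property is stable under pushouts along monomorphisms of simplicial sets). The only extra ingredient is the standard skeletal filtration of a simplicial set. Since $D$ is finite, its skeletal filtration
\[
\emptyset = \operatorname{sk}_{-1}D \subseteq \operatorname{sk}_0 D \subseteq \operatorname{sk}_1 D \subseteq \cdots \subseteq \operatorname{sk}_N D = D
\]
stabilizes after finitely many steps, and at each stage one has a pushout square
\[
\begin{tikzpicture}[scale=1.5]
	\node (a) at (0,1) {$\coprod_{x} \partial\Delta^n$};
	\node (b) at (0,0) {$\coprod_{x} \Delta^n$};
	\node (c) at (2,1) {$\operatorname{sk}_{n-1}D$};
	\node (d) at (2,0) {$\operatorname{sk}_{n}D$};
	\draw[->] (a) to (b);
	\draw[->] (a) to (c);
	\draw[->] (b) to (d);
	\draw[->] (c) to (d);
\end{tikzpicture}
\]
where the coproduct runs over the (finitely many) non-degenerate $n$-simplices $x$ of $D$. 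Because $D$ is non-singular, each attaching map $\bar x : \Delta^n \to D$ is a monomorphism, so the top horizontal map $\coprod_x \partial\Delta^n \hookrightarrow \coprod_x \Delta^n$ is a monomorphism and the left vertical map is likewise a monomorphism; thus \cref{L:fakification1} applies verbatim.

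First I would check that $\partial\Delta^n$ has the gluing property with respect to every Serre topological stack $\stack{X}$, by induction on $n$: $\partial\Delta^0 = \emptyset$ trivially has it, and $\partial\Delta^n$ is obtained from $\operatorname{sk}_{n-2}\partial\Delta^n$ (a finite non-singular simplicial set already handled by the induction on dimension, being the union of the codimension-$\geq 2$ faces) by gluing on the $(n-1)$-dimensional faces $\Delta^{n-1}$ along their boundaries $\partial\Delta^{n-1}$; all of these have the gluing property by the inductive hypothesis and \cref{L:fakification0}, so \cref{L:fakification1} gives it for $\partial\Delta^n$. (Alternatively, and more cleanly, one runs a single induction on the pair (dimension, number of non-degenerate simplices) for all finite non-singular $D$ at once.) Next, having the gluing property for all $\partial\Delta^n$ and all $\Delta^n$ in hand, I would induct on $n$ through the skeletal filtration of the given $D$: assuming $\operatorname{sk}_{n-1}D$ has the gluing property, the pushout square above together with \cref{L:fakification0} (applied to the disjoint union $\coprod_x \Delta^n$, which has the gluing property since a finite coproduct of $\Delta^n$'s does — this follows from \cref{L:fakification1} applied repeatedly to the trivial pushouts expressing the coproduct, or directly since both $|-|$ and $\widetilde{(-)}$ send coproducts to coproducts and $\Hom$ sends coproducts to products) and \cref{L:fakification1} yield the gluing property for $\operatorname{sk}_n D$. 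Since $D = \operatorname{sk}_N D$ for some $N$, we conclude.

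The only subtlety — and the step I expect to require the most care — is verifying the hypotheses of \cref{L:fakification1} at each stage, specifically that the attaching maps are genuine monomorphisms of simplicial sets so that the pushout is along monomorphisms; this is exactly where non-singularity of $D$ is used (for a general simplicial set the map $\bar x : \Delta^n \to X$ need not be injective, and the skeletal attaching square is a pushout along a map that is not a monomorphism). One must also be a little careful that subobjects of a finite non-singular simplicial set are again finite and non-singular, which is part of the cited closure properties (\cite{JRW}, and the remark preceding this corollary), so that the induction is well-founded. Modulo these bookkeeping points the argument is a routine induction, and no new geometric input beyond \cref{L:fakification} (already absorbed into \cref{L:fakification1}) is needed.
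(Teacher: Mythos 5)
Your overall strategy---an induction powered by \cref{L:fakification0} and \cref{L:fakification1}---is the same as the paper's, but the specific decomposition you chose does not satisfy the hypotheses of \cref{L:fakification1}, and this is a genuine gap. In the skeletal pushout
\[
\operatorname{sk}_{n}D \;=\; \operatorname{sk}_{n-1}D \,\stackvee_{\coprod_{x}\partial\Delta^{n}}\, \textstyle\coprod_{x}\Delta^{n},
\]
the leg $\coprod_{x}\partial\Delta^{n}\to \operatorname{sk}_{n-1}D$ is in general \emph{not} a monomorphism: non-singularity guarantees that each individual $\bar{x}\colon\Delta^{n}\to D$ is injective, but distinct non-degenerate $n$-simplices may share faces, and then the simultaneous attaching map identifies points of different summands. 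Already for $D=\partial\Delta^{2}$ at the stage $n=1$ one gets six vertices of $\coprod_{3}\partial\Delta^{1}$ mapping onto the three vertices of $\operatorname{sk}_{0}D$. Since \cref{L:fakification1} (via \cref{L:fakification}) requires \emph{both} legs of the pushout to be monomorphisms (closed embeddings after realization), it does not apply ``verbatim'' as you claim. A second, smaller issue: your ``direct'' justification that $\coprod_{x}\Delta^{n}$ has the gluing property is wrong as stated, because the Yoneda embedding does not preserve coproducts; $\Hom_{\presheafgpd}(|A\sqcup B|,\stack{X})=\stack{X}(|A|\sqcup|B|)$ is only \emph{equivalent} to the product of the two Hom-groupoids (by descent for the open cover $\{|A|,|B|\}$), not isomorphic to it. Your alternative route through \cref{L:fakification1} with $A=\emptyset$ works, but then you must also check that $\emptyset$ has the gluing property, which again comes down to descent ($\stack{X}(\emptyset)\sim *$).

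Both problems disappear if you attach one non-degenerate simplex at a time rather than a whole skeletal layer at once, and that is exactly what the paper does: it inducts on the number of non-degenerate simplices, peels off a single maximal non-degenerate simplex $x$, and writes $D=B\stackvee_{A}\Delta^{n}$ with $B$ generated by the remaining non-degenerate simplices and $A=B\cap\bar{x}(\Delta^{n})$. Here both $A\hookrightarrow B$ and $A\hookrightarrow\Delta^{n}$ are honest monomorphisms of finite non-singular simplicial sets, so \cref{L:fakification1} applies, and no coproducts or boundary spheres ever need to be handled separately. I recommend you restructure your induction along these lines (your own parenthetical remark about inducting on the number of non-degenerate simplices is the right instinct); the rest of your bookkeeping---closure of finite non-singular simplicial sets under subobjects, finiteness of the filtration---is fine.
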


\begin{proof}
Proof proceeds by induction on the total number of non-degenerate simplices of $D$. 
Choose a maximal non-degenerate
simplex $x$, and write $B \subset D$ for the sub simplicial set of $D$ 
generated by the rest of the
non-degenerate simplices. Set $A:=B\cap \bar{x}(\Delta^n)$, where
$\bar{x} :  \Delta^n \to D$ is the map corresponding to $x$; 
note that this map is a monomorphism by assumption. Also, note that  the sets 
of non-degenerate simplices of  $A$ and $B$ are both properly contained in the set of 
non-degenerate simplices of $D$, so they have a smaller size. 
By the induction hypothesis, the claim is true
for  $A$ and  $B$, and by \cref{L:fakification0} it is also true for $\Delta^n$.
Therefore,  by \cref{L:fakification1}, the claim is true for $D=B\stackvee_{A}\Delta^n$.
\end{proof}

As we pointed out above, in the case 
$D=\Delta^n$ the above equivalence is indeed an isomorphism of groupoids.

\section{Homotopy between morphisms of presheaves of groupoids}
\label{S:Homotopy}

We review the notion of homotopy between morphisms of stacks from \cite{No14},
and introduce a variant called restricted homotopy.

\subsection{Fiberwise homotopy}

\begin{defn}
\label{D:stack_homotopy}
	Let $f,g:\stack{A} \to \stack{X}$ and  $p:\stack{X} \to \stack{Y}$
	be morphisms of presheaves of
	groupoids, and 	$\varphi :p \circ f \Rightarrow p \circ g$ a 2-isomorphism:
	\[
	\begin{tikzpicture}[scale=1.75]

	\node (a) at (0,0) {$\stack{A}$};
	\node (b) at (1,1) {$\stack{X}$};
	\node (c) at (1,0) {$\stack{Y}$};
	
	\draw[->] (a.north) to [bend left = 20] node[above] {\scriptsize $f$} (b.west);
	\draw[->] (a) to [bend right = 0] node[above] {\scriptsize $g$} (b);
	\draw[->] (a.east) to [bend left = 15] node[above,inner sep=1pt] (pf)
	{\scriptsize $p \circ f$} (c.west);
	\draw[->] (a.south east) to [bend right = 15] node[below,inner sep=1pt] (pg)
	{\scriptsize $p \circ g$} (c.south west);
	\draw[->] (b) to node[right] {\scriptsize $p$} (c);
	
	\draw[double,
              double equal sign distance,
              -implies,
              shorten >= 2pt,
              shorten <= 2pt
              ]
              (pf) to node[left] {$\varphi$} (pg);
	
\end{tikzpicture}
\]
A \emph{fiberwise homotopy from $f$ to $g$ relative to}
$\varphi$ is a quadruple  $(H,\epsilon_0,\epsilon_1,\psi)$  where
	\begin{itemize}
		\item $H:\stack{A} \times [0,1] \to \stack{X}$ is a morphism of
		presheaves of groupoids;
		\item $\epsilon_0:f \Rightarrow H_0$ and
           $\epsilon_1:H_1 \Rightarrow g$
		   are 2-isomorphisms;
		\item $\psi:p\circ f \circ \pr_1 \Rightarrow p \circ H$ is a
           2-isomorphism,
		\[
	\begin{tikzpicture}[scale=1.5]

	\node (a) at (0,1) {$\stack{A} \times [0,1]$};
	\node (b) at (0,0) {$\stack{A}$};
	\node (c) at (2,1) {$\stack{X}$};
	\node (d) at (2,0) {$\stack{Y}$};
	
	\draw[->] (a) to node[left] {$\pr_1$} (b);
	\draw[->] (a) to node[above] {$H$} (c);
	\draw[->] (b) to node[below] {$p \circ f$} (d);
	\draw[->] (c) to node[right] {$p$} (d);
	
	\draw[dbl] (0,0.5) to [bend right=35] node[below right] {$\psi$} (1,1);
	
	\end{tikzpicture}
		\]
		such that $\psi_0  = p \circ \epsilon_0$ and
		  $\psi_1 \cdot (p \circ \epsilon_1) = \varphi$.
	\end{itemize}
(Notation: $H_i:=H|_{\stack{A} \times \{i\} }$, 
$\psi_i:= \psi|_{\stack{A} \times \{i\} }$, for $i=0,1$.) 
In the case where $\varphi$ and $\psi$ are both identity 2-isomorphisms 
(so $p \circ f=p \circ g$ and $p\circ f \circ \pr_1 = p \circ H$) 
we say that $H$ is a \emph{homotopy relative to $\stack{Y}$}.

A fiberwise homotopy as above is called \emph{strict} if $\epsilon_0$ 
and $\epsilon_1$ are the identity 2-isomorphisms. 

A \emph{ghost fiberwise homotopy from $f$ to $g$ relative to} $\varphi$ 
is a 2-isomorphism $\xi : f \Rightarrow g$ such that $\varphi = p \circ \xi$. 
\end{defn}

Ghost homotopies typically arise from those quadruples 
$(H,\epsilon_0,\epsilon_1,\psi)$ for which $H$ and $\psi$ remain constant along $[0,1]$, 
that is, they  factor through $\pr_1$. In this case, $\xi:=\epsilon_0\cdot\epsilon_1$ 
is a ghost fiberwise homotopy from $f$ to $g$ relative to $\varphi$. Conversely, from 
a ghost homotopy $\varphi$ we can construct  quadruples 
$(g\circ\pr_1,\xi,\id,\varphi\circ\pr_1)$ and $(f\circ\pr_1,\id, \xi,\id\circ\pr_1)$.

\begin{rem}{\label{R:fiberwisehomotopy}}
 There is some flexibility in choosing $H$. More precisely, if 
 $H':\stack{A} \times [0,1] \to \stack{X}$ is 2-isomorphic to $H$ via 
 $\alpha : H \Rightarrow H'$, then $(H',\epsilon'_0,\epsilon'_1,\psi')$
 is also a fiberwise homotopy from $f$ to $g$ relative to
$\varphi$, where $\epsilon'_0=\epsilon_0\cdot\alpha_0$, 
$\epsilon'_1=\alpha_1^{-1}\cdot\epsilon_1$ and $\psi'=\psi\cdot(p\circ\alpha)$.
\end{rem}

\subsection{Restricted fiberwise homotopy}
\label{SS:restricted}
The notion of restricted homotopy we introduce below only applies to 
morphisms of the form $\tilde{A} \to \stack{X}$, where $A$ is a 
simplicial set and $\stack{X}$ is a presheaf of groupoids.

\begin{defn}
\label{D:fake homotopy}
Let $A$ be a simplicial set. Let  $f,g:\tilde{A} \to \stack{X}$ and 
$p:\stack{X} \to \stack{Y}$ be morphisms of presheaves of groupoids, 
and $\varphi :p \circ f \Rightarrow p \circ g$ a 2-isomorphism:
	\[
\begin{tikzpicture}[scale=1.75]

	\node (a) at (0,0) {$\tilde{A}$};
	\node (b) at (1,1) {$\stack{X}$};
	\node (c) at (1,0) {$\stack{Y}$};
	
	\draw[->] (a.north) to [bend left = 20] node[above] {\scriptsize $f$} (b.west);
	\draw[->] (a) to [bend right = 0] node[above] {\scriptsize $g$} (b);
	\draw[->] (a.east) to [bend left = 15] node[above,inner sep=1pt] (pf)
	{\scriptsize $p \circ f$} (c.west);
	\draw[->] (a.south east) to [bend right = 15] node[below,inner sep=1pt] (pg)
	{\scriptsize $p \circ g$} (c.south west);
	\draw[->] (b) to node[right] {\scriptsize $p$} (c);
	
	\draw[double,
              double equal sign distance,
              -implies,
              shorten >= 2pt,
              shorten <= 2pt
              ]
              (pf) to node[left] {$\varphi$} (pg);
	
\end{tikzpicture}
\]
	A \emph{restricted fiberwise homotopy from $f$ to $g$ relative to}
	$\varphi$ is a quadruple  $(H,\epsilon_0,\epsilon_1,\psi)$  where
	\begin{itemize}
		\item $H:\widetilde{A \times \Delta^1} \to \stack{X}$ is a morphism of
		presheaves of groupoids;
		\item $\epsilon_0:f \Rightarrow H_0$ and
		   $\epsilon_1:H_1 \Rightarrow g$
		   are 2-isomorphisms;
		\item $\psi:p\circ f \circ \widetilde{\pr}_1 \Rightarrow p \circ H$ 
          is a 2-isomorphism,
		\[
	\begin{tikzpicture}[scale=1.5]

	\node (a) at (0,1) {$\widetilde{A \times \Delta^1}$};
	\node (b) at (0,0) {$\tilde{A}$};
	\node (c) at (2,1) {$\stack{X}$};
	\node (d) at (2,0) {$\stack{Y}$};
	
	\draw[->] (a) to node[left] {$\widetilde{\pr}_1$} (b);
	\draw[->] (a) to node[above] {$H$} (c);
	\draw[->] (b) to node[below] {$p \circ f$} (d);
	\draw[->] (c) to node[right] {$p$} (d);
	
	\draw[dbl] (0,0.5) to [bend right=35] node[below right] {$\psi$} (1,1);
	
	\end{tikzpicture}
		\]
		 such that $\psi_0  = p \circ \epsilon_0$ and
		  $\psi_1 \cdot (p \circ \epsilon_1) = \varphi$.
	\end{itemize}
(Notation: $H_0:=H\circ\tilde{i}$, where $i : A \to A\times\Delta^1$ is the time $0$ map.) 
In the case where $\varphi$ and $\psi$ are both identity 2-isomorphisms 
(so $p \circ f=p \circ g$ and $p\circ f \circ \widetilde{\pr}_1 = p \circ H$)
we say that $H$ is a \emph{restricted homotopy relative to $\stack{Y}$}.

A restricted fiberwise homotopy as above is called \emph{strict} if $\epsilon_0$   
and $\epsilon_1$ are the identity 2-isomorphisms. 
\end{defn}

\begin{rem}
\label{R:fake_remark}
In view of the adjunction of \cref{L:restriction isom}, we can replace the 
diagrams above with their corresponding diagram in the category of simplicial 
groupoids. For example,
	\[
\begin{tikzpicture}[scale=1.75]

	\node (a) at (0,0) {$A$};
	\node (b) at (1,1) {$\stack{X}\Del$};
	\node (c) at (1,0) {$\stack{Y}\Del$};
	
	\draw[->] (a.north) to [bend left = 20] node[above] {\scriptsize $f'$} (b.west);
	\draw[->] (a) to [bend right = 0] node[above] {\scriptsize $g'$} (b);
	\draw[->] (a.10) to [bend left = 15] node[above,pos=0.6,inner sep=1pt] (pf)
	{\scriptsize $p' \circ f'$} (c.170);
	\draw[->] (a.south east) to [bend right = 20] node[below,inner sep=1pt] (pg)
	{\scriptsize $p' \circ g'$} (c.south west);
	\draw[->] (b) to node[right] {\scriptsize $p'$} (c);
	
	\draw[double,
              double equal sign distance,
              -implies,
              shorten >= 2pt,
              shorten <= 2pt
              ]
              (pf) to node[left] {$\varphi'$} (pg);
	
\end{tikzpicture}
\]
Thus, we can regard a restricted homotopy as a homotopy in the category of simplicial groupoids.
\end{rem}

\begin{rem}{\label{R:restrictedfiberwisehomotopy}}
As in \cref{R:fiberwisehomotopy}, there is some 
 flexibility in choosing $H$, namely, we are allowed to replace $H$ by any map 2-isomorphic
 to it (and adjust $\epsilon_0$, $\epsilon_1$ and $\psi$ accordingly).
\end{rem}

An ordinary homotopy gives rise to a restricted homotopy.

\begin{lemma}
\label{L:htpy=>fakehtpy}
 Let $A$ be a simplicial set and let $\stack{A}:=\tilde{A}$. Notation being as 
 in \cref{D:stack_homotopy}, suppose that we are given a fiberwise homotopy 
 $(H,\epsilon_0,\epsilon_1,\psi)$ from $f$ to $g$ relative to $\varphi$. Then, 
 precomposing with the natural map $\widetilde{A\times\Delta^1} \to \tilde{A}\times [0,1]$ 
 gives rise to a restricted fiberwise homotopy from   $f$ to $g$ relative to $\varphi$.
\end{lemma}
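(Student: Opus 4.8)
The plan is to produce the restricted fiberwise homotopy by pulling back the given fiberwise homotopy $(H,\epsilon_0,\epsilon_1,\psi)$ along the canonical comparison map. Recall that for any functor $F$ between categories with finite products there is a natural map $F(X\times Y)\to FX\times FY$, determined by the images of the two projections together with the universal property of $FX\times FY$; applying this to $F=(-)^{\sim}\colon\sset\to\presheafset$ and to $A\times\Delta^1$, and using the canonical isomorphism $\widetilde{\Delta^1}\cong|\Delta^1|=[0,1]$ of \cref{L:fakification0}, we obtain exactly the natural map
\[
\theta\colon\widetilde{A\times\Delta^1}\longrightarrow\tilde A\times[0,1]
\]
that appears in the statement. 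Its two components are $\pr_1\circ\theta=\widetilde{\pr}_1\colon\widetilde{A\times\Delta^1}\to\tilde A$ and $\pr_2\circ\theta=\widetilde{\pr}_2\colon\widetilde{A\times\Delta^1}\to[0,1]$, where $\pr_1\colon A\times\Delta^1\to A$ and $\pr_2\colon A\times\Delta^1\to\Delta^1$ are the simplicial projections. I would then set $\bar H:=H\circ\theta$, $\bar\epsilon_0:=\epsilon_0$, $\bar\epsilon_1:=\epsilon_1$, and $\bar\psi:=\psi\circ\theta$ (the $2$-isomorphism obtained by whiskering $\psi$ with $\theta$), and check that $(\bar H,\bar\epsilon_0,\bar\epsilon_1,\bar\psi)$ satisfies the three conditions of \cref{D:fake homotopy}.

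First I would pin down how $\theta$ interacts with the endpoint inclusions and the projections; all of this is forced by functoriality of $(-)^{\sim}$, the identifications $\widetilde{\Delta^n}\cong|\Delta^n|$, and the universal property of a product. Writing $i,j\colon A\to A\times\Delta^1$ for the time-$0$ and time-$1$ maps, one has $\pr_1\circ i=\id_A$, while $\pr_2\circ i$ is the constant map at the vertex $0$ of $\Delta^1$, which factors through $\Delta^0$; applying $(-)^{\sim}$ and using that $\widetilde{\Delta^0}=|\Delta^0|$ is a point sitting at the endpoint $0$ of $[0,1]$, one concludes that $\theta\circ\tilde i\colon\tilde A\to\tilde A\times[0,1]$ is precisely the time-$0$ inclusion, and symmetrically $\theta\circ\tilde j$ is the time-$1$ inclusion. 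Consequently $\bar H_0=\bar H\circ\tilde i=H\circ\theta\circ\tilde i=H|_{\tilde A\times\{0\}}=H_0$ and likewise $\bar H_1=H_1$, so that $\bar\epsilon_0\colon f\Rightarrow H_0=\bar H_0$ and $\bar\epsilon_1\colon\bar H_1=H_1\Rightarrow g$ are $2$-isomorphisms with the right source and target. Likewise $\pr_1\circ\theta=\widetilde{\pr}_1$ shows that $\bar\psi$ runs from $(p\circ f\circ\pr_1)\circ\theta=p\circ f\circ\widetilde{\pr}_1$ to $(p\circ H)\circ\theta=p\circ\bar H$, which is exactly the shape demanded by \cref{D:fake homotopy}.

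It then remains to verify the two normalization identities. Whiskering $\bar\psi=\psi\circ\theta$ with $\tilde i$ and using that $\theta\circ\tilde i$ is the time-$0$ inclusion, we get $\bar\psi_0=\psi|_{\tilde A\times\{0\}}=\psi_0$, which equals $p\circ\epsilon_0=p\circ\bar\epsilon_0$ by hypothesis; similarly $\bar\psi_1=\psi_1$, whence $\bar\psi_1\cdot(p\circ\bar\epsilon_1)=\psi_1\cdot(p\circ\epsilon_1)=\varphi$. This is precisely the data of a restricted fiberwise homotopy from $f$ to $g$ relative to $\varphi$, and it is strict whenever the original one is, since $\bar\epsilon_0=\epsilon_0$ and $\bar\epsilon_1=\epsilon_1$.

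I do not expect a genuine obstacle here. The one point that deserves a little care is that $(-)^{\sim}$ does not preserve finite products, so $\theta$ is in general not an isomorphism (compare the remarks after \eqref{Eq:psi} and \cref{E:Lambda}); this is harmless, since we only ever precompose with $\theta$. The substantive content is therefore the bookkeeping of the second and third paragraphs — that $\theta$ exchanges the simplicial projections for the presheaf-level ones and carries $\tilde i,\tilde j$ to the endpoint inclusions — which is a routine diagram chase through the functoriality of $(-)^{\sim}$. One can alternatively carry out the entire construction in $\sgpd$ via the adjunction of \cref{L:restriction isom}, as indicated in \cref{R:fake_remark}, where these verifications become essentially tautological.
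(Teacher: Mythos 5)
Your proof is correct and is exactly the argument the paper intends: the paper's own proof is just ``Straightforward,'' and you supply the routine verification that precomposition with the canonical comparison map $\widetilde{A\times\Delta^1}\to\tilde A\times[0,1]$ (whose components are $\widetilde{\pr}_1,\widetilde{\pr}_2$ and which carries $\tilde i,\tilde j$ to the endpoint inclusions) transports the data $(H,\epsilon_0,\epsilon_1,\psi)$ to a restricted fiberwise homotopy satisfying the conditions of \cref{D:fake homotopy}.
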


\begin{proof}
Straightforward.
\end{proof}

\section{Lifting conditions}
\label{S:Lemmas}

We shall review some of the material from \cite{No14} and recall the notion of 
(weak) Serre fibration between stacks. For a full account see Sections 2 and 3 of \cite{No14}. 
Before we start, it is worthwhile to emphasize the difference between the notion of fibration  
in this section and the standard ones in well known model category structures on the 
category of presheaves of groupoids: our notion is more geometric, 
in the sense that it does not distinguish between equivalent presheaves; 
in particular, any equivalence of presheaves of groupoids is a fibration in our sense.


\begin{defn}
\label{D:WLLP}
Let $i:\stack{A} \to \stack{B}$ and $p:\stack{X} \to \stack{Y}$ be morphisms of 
presheaves of groupoids. 
Then, $i$ has the \emph{weak left lifting property (WLLP)} with respect to $p$ if given
	\[
\begin{tikzpicture}[scale=1]
	\node (a) at (0,2) {$\stack{A}$};
	\node (b) at (0,0) {$\stack{B}$};
	\node (c) at (2,2) {$\stack{X}$};
	\node (d) at (2,0) {$\stack{Y}$};
	
	\draw[->] (a) to node[left] {$i$} (b);
	\draw[->] (a) to node[above] {$f$} (c);
	\draw[->] (b) to node[below] {$g$} (d);
	\draw[->] (c) to node[right] {$p$} (d);
	
	\draw[dbl] (1,2) to [bend left=30] node[below right] {$\alpha$} (0,1);
	
\end{tikzpicture}
\]
	there is a morphism $h:\stack{B} \to \stack{X}$, a 2-isomorphism
	$\gamma:g \Rightarrow p \circ h$ and a fiberwise homotopy $H$
	from $f$ to $h \circ i$ relative to $\alpha\cdot(\gamma \circ i)$:
	\[
\begin{tikzpicture}[scale=1]
	\node (a) at (0,2) {$\stack{A}$};
	\node (b) at (0,0) {$\stack{B}$};
	\node (c) at (2,2) {$\stack{X}$};
	\node (d) at (2,0) {$\stack{Y}$};
	
	\draw[->] (a) to node[left] {$i$} (b);
	\draw[->] (a) to node[above] {$f$} (c);
	\draw[->] (b) to node[below] {$g$} (d);
	\draw[->] (c) to node[right] {$p$} (d);
	
	\draw[->,dashed] (b) to node[above] {$h$} (c);
	
	\draw[dbl] (1,0) to [bend right=0] node[below right,inner sep=1pt] {$\gamma$} (2,1);
	
	\node at (0.45,1.55) {$H$};
\end{tikzpicture}
\]
We say that $i$ has the \emph{left lifting property (LLP)} with respect to $p$ 	 
if $H$ can be taken to be a ghost homotopy. In other words, there are 2-isomorphisms 	
$\beta:f \Rightarrow h \circ i$ and $\gamma:g \Rightarrow p \circ h$ such that
$p\circ\beta=\alpha\cdot(\gamma \circ i)$, i.e., the following
diagram commutes ($\alpha$ is not shown in the diagram):
	\[
	\begin{tikzpicture}[scale=1]
	\node (a) at (0,2) {$\stack{A}$};
	\node (b) at (0,0) {$\stack{B}$};
	\node (c) at (2,2) {$\stack{X}$};
	\node (d) at (2,0) {$\stack{Y}$};
	
	\draw[->] (a) to node[left] {$i$} (b);
	\draw[->] (a) to node[above] {$f$} (c);
	\draw[->] (b) to node[below] {$g$} (d);
	\draw[->] (c) to node[right] {$p$} (d);
	
	\draw[->,dashed] (b) to node[above] {$h$} (c);
	
	\draw[dbl] (1,0) to [bend right=0] node[below right,inner sep=1pt] {$\gamma$} (2,1);
	\draw[dbl] (1,2) to [bend right=0] node[above left,inner sep=1pt] {$\beta$} (0,1);

\end{tikzpicture}
\]
We say that $p$ has the \emph{(weak) covering homotopy property} with respect to 
$\stack{A}$, if the
inclusion $\stack{A} \to  \stack{A}\times [0,1]$, $a \mapsto (a,0)$, has  
(W)LLP with respect to $p$. 
\end{defn}

\begin{rem}
\label{R:weak}
The usage of the term `weak' (which means, `up to fiberwise homotopy') in the above definition 
is in conflict  with our usual usage of the term weak 
(which means, `up to 2-isomorphism', as opposed to `strict'). 
But since the above definition is quite standard in the homotopy theory literature, 
we deemed it inappropriate to change it. We apologize for the confusion this may cause.
\end{rem}

\begin{defn}[\cite{No14}, Definitions 3.6, 3.7]
\label{D:WTS_fib}
A  morphism of presheaves of groupoids $p:\stack{X} \to \stack{Y}$ is called a 
\emph{(weak) Serre fibration} if it has the 
(weak) covering homotopy property with respect to every finite CW complex $A$.
That is, $A \to A \stimes [0,1]$ has the (W)LLP with respect to $p$.
It is called a \emph{(weak) trivial Serre fibration} if every finite CW inclusion 
$i:A \hookrightarrow B$ has the (W)LLP with respect to $p$.
\end{defn}


\begin{lemma}[see \cite{No14}, Proposition 3.21]{\label{L:SerrevsSerre}}
Let $p:\stack{X} \to \stack{Y}$ be a (weak) Serre fibration.
Then,  every cellular inclusion $i:A \hookrightarrow B$
of finite CW complexes that induces isomorphisms on all $\pi_n$   has the (W)LLP 
with respect to $p$.
\end{lemma}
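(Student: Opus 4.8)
The plan is to reduce the general cellular inclusion $i : A \hookrightarrow B$ to a finite composite of elementary cases, namely the inclusions of a single cell. Since $B$ is built from $A$ by attaching finitely many cells, and WLLP (resp.\ LLP) is closed under composition of cofibrations — which should be checked first, as it is the structural backbone of the argument — it suffices to handle the case where $B = A \cup_{S^{n-1}} D^n$ is obtained from $A$ by attaching a single $n$-cell. The hypothesis that $i$ induces isomorphisms on all $\pi_n$ is a global condition, so I would first observe that under such an $i$ the relative cells can be organised (by the usual CW-approximation/obstruction argument, or simply by induction on dimension together with the long exact sequence of the pair) so that the cells cancel in adjacent dimensions; more precisely, I would invoke the standard fact that a cellular $\pi_*$-isomorphism between finite CW complexes is, up to the relevant equivalence, a composite of elementary expansions $A \hookrightarrow A \cup (D^n \cup D^{n+1})$, each of which is an elementary deformation retract. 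This is where I would lean on \cite{No14}, Proposition 3.21, rather than reproving it.

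Next, for such an elementary expansion $A \hookrightarrow A'$ (a deformation retract with retraction $r : A' \to A$ and homotopy $h : A' \times [0,1] \to A'$ from $\id$ to $i \circ r$ rel $A$), I would prove directly that $i$ has the (W)LLP with respect to any $p : \stack{X} \to \stack{Y}$ that is a (weak) Serre fibration. Given a square with top map $f : \stack{A} \to \stack{X}$ (i.e.\ $\stack{A}$ the presheaf associated to $A$), bottom map $g : \stack{A}' \to \stack{Y}$ and $\alpha : g \circ i \Rightarrow p \circ f$, I would set $h' := f \circ r : \stack{A}' \to \stack{X}$ as the candidate lift. Then $h' \circ i = f \circ r \circ i = f$ on the nose (using that $r \circ i = \id_A$), so the homotopy from $f$ to $h' \circ i$ is trivial; the only thing to produce is a 2-isomorphism $\gamma : g \Rightarrow p \circ h'$ compatible with $\alpha$. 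To get $\gamma$ I would use the covering homotopy property (the (W)LLP of $\stack{A}' \to \stack{A}' \times [0,1]$ with respect to $p$) applied to the composite $\stack{A}' \times [0,1] \to \stack{A}' \xrightarrow{g} \stack{Y}$ together with the lift $f \circ r$ of the endpoint $g|_{t = 1}$ (the endpoint where $h$ lands in $A$), transporting along $h$; this produces a lift over all of $\stack{A}' \times [0,1]$ whose restriction at $t = 0$ furnishes the desired $h'$ together with the comparison $\gamma$. In the weak case one picks up a fiberwise homotopy rather than a strict lift, which is exactly what WLLP allows.

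The main obstacle I anticipate is the bookkeeping of 2-isomorphisms when composing the elementary cases: unlike in the 1-categorical setting, the lifts produced at each stage agree with the previous data only up to specified 2-isomorphism, and one must check that the coherence data ($\alpha$, $\beta$, $\gamma$ in \cref{D:WLLP}) splice correctly along the composite, and that in the weak case the concatenation of fiberwise homotopies (via \cref{R:fiberwisehomotopy}, reparametrising $[0,1]$) is again a fiberwise homotopy relative to the composed coherence 2-isomorphism. A secondary, more routine point is justifying the decomposition of a cellular $\pi_*$-iso into elementary expansions for \emph{finite} CW complexes and checking it is compatible with passage to the associated presheaves of groupoids; but since the statement explicitly cites \cite{No14}, Proposition 3.21, I would treat that decomposition as a black box and concentrate the write-up on the elementary case and the composition lemma.
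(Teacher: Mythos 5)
Your plan diverges substantially from the paper's argument, and two of its steps are genuinely problematic. First, the reduction to elementary expansions: it is \emph{not} a standard fact that a cellular inclusion of finite CW complexes inducing isomorphisms on all $\pi_n$ decomposes into elementary expansions $A\hookrightarrow A\cup(D^n\cup D^{n+1})$ --- that is precisely the question of whether the inclusion is a \emph{simple} homotopy equivalence, and it is obstructed by the Whitehead torsion $\tau(B,A)\in \mathrm{Wh}(\pi_1 A)$. Moreover, the reference you propose to lean on (\cite{No14}, Proposition 3.21) is the source of the lemma itself, not a decomposition result, so that appeal is circular. The good news is that this whole step is unnecessary: Whitehead's theorem for CW pairs already makes $A$ a strong deformation retract of $B$ in one stroke, and your ``elementary case'' argument (which is really an argument about deformation retract inclusions) would then apply to all of $i:A\hookrightarrow B$ at once, with no cell-by-cell induction and no composition lemma for (W)LLP.

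Second, and more seriously, your elementary-case argument only delivers the \emph{weak} conclusion. The candidate $h'=f\circ r$ makes the upper triangle strictly commute but $p\circ h'$ is only \emph{homotopic} to $g$ (via $g\circ H$), not 2-isomorphic to it; when you repair this by lifting the homotopy $g\circ H$ starting from $f\circ r$ via the covering homotopy property, the corrected lift $h=G_1$ satisfies $p\circ h\cong g$, but $h\circ i$ is now only \emph{fiberwise homotopic} to $f$ (via $G|_{A\times[0,1]}$), not 2-isomorphic to it. That proves WLLP but not LLP, so the non-weak half of the lemma is not established. To get the ghost homotopy required for LLP one must lift $g\circ H$ \emph{rel} $A$, i.e.\ use the lifting property against $B\times\{0\}\cup A\times[0,1]\hookrightarrow B\times[0,1]$. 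The paper packages exactly this into a one-line retract argument: since $A$ is a deformation retract of $B$, the map $i$ is a retract of $j:B\to B\times[0,1]$, $j(b)=(b,0)$ (via the standard homeomorphism of pairs $(B\times I,\,B\times\{0\}\cup A\times I)\cong(B\times I,\,B\times\{0\})$ for CW pairs), and $j$ has the (W)LLP with respect to $p$ by the very definition of a (weak) Serre fibration; retracts inherit the (W)LLP. I would recommend rewriting your proof along these lines.
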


\begin{proof}
  The map $i : A \hookrightarrow B$ being as above, 
  $A$ becomes a deformation retract of $B$. Therefore,
  the map $i$ is a retract of the the map $j : B \to B \times [0,1]$, $j(b)=(b,0)$,
    \[
	\begin{tikzpicture}[scale=1]
	\node (a) at (0,2) {$A$};
	\node (b) at (0,0) {$B$};
	\node (c) at (2,2) {$B$};
	\node (d) at (2,0) {$B \times [0,1]$};
	
	\draw[->] (a) to node[left] {$i$} (b);
	\draw[->] (c) to node[above] {$r$} (a);
	\draw[->] (d) to node[below] {$H$} (b);
	\draw[->] (c) to node[right] {$j$} (d);
	
\end{tikzpicture}
\]
 Here, $r$ is the retraction and $H :  B \times [0,1] \to B$ is a homotopy with
 $H_0=r$ and $H_1=\id_B$.
 Since $j$ has (W)LLP with respect to $p$, so does its retract $i$.
\end{proof}

\begin{rem}
\label{R:serre}
As opposed to the notion of Reedy fibration that we will introduce in \cref{D:reedy_cond}, 
the notion of (weak) 	Serre fibration is ``intrinsic'' (or ``geometric'') in the sense 
that 	if $p:\stack{X} \to \stack{Y}$ is a (weak) 	Serre fibration and 
$p':\stack{X}' \to \stack{Y}'$ is a morphism equivalent to it, 	then $p'$ 
is also a (weak)	Serre fibration.
\end{rem}

\begin{prop}
\label{P:trivserre}
Let $p :\stack{X} \to \stack{Y}$ be a morphism of topological stacks, and assume 
that $\stack{X}$ is  Serre. Then, $p$ is a (weak) trivial Serre fibration if and only 
if it is a (weak) Serre fibration and a weak equivalence.
\end{prop}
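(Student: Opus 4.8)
The plan is to prove the two implications separately, with the ``only if'' direction being essentially formal and the ``if'' direction carrying the real content. For the ``only if'' direction, suppose $p$ is a (weak) trivial Serre fibration. Then by definition every finite CW inclusion has the (W)LLP with respect to $p$; in particular, taking the inclusion of the basepoint $\ast \hookrightarrow S^n$ and the inclusion $S^n \hookrightarrow D^{n+1}$, one extracts surjectivity and injectivity of $p_\ast$ on homotopy groups by the usual lifting argument (a lift up to fiberwise homotopy of a map $S^n \to \stack{X}$ against $p$ still detects the homotopy class, since fiberwise homotopy in particular gives a homotopy after composing with $p$, and a homotopy upstairs). So $p$ is a weak equivalence; and it is trivially a (weak) Serre fibration since $A \to A \times [0,1]$ is a finite CW inclusion.

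For the ``if'' direction we are given that $p$ is a (weak) Serre fibration and a weak equivalence, and we must show every finite CW inclusion $i : A \hookrightarrow B$ has the (W)LLP with respect to $p$. First I would reduce to the case of a cellular inclusion by replacing $i$ up to homeomorphism by a CW pair. Then I would proceed by induction on the cells of $B$ not in $A$, so that it suffices to treat the pushout of a single cell, i.e.\ the inclusion $A' \hookrightarrow A' \cup_{S^{n-1}} D^n$. Here the key input is that $p$, being a weak equivalence between topological stacks, has the property that $D^n \to \stack{Y}$ together with a lift of the attaching sphere $S^{n-1} \to \stack{X}$ can be filled: because $\stack{X} \to \stack{Y}$ induces isomorphisms on all $\pi_n$, the relative lifting problem against the pair $(D^n, S^{n-1})$ is solvable up to the appropriate homotopy. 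Combining this with the (weak) covering homotopy property (to propagate the partial lift already defined on $A'$ across the homotopy produced when filling the cell), one assembles the lift $h$ on all of $B = A' \cup_{S^{n-1}} D^n$, together with the required $2$-isomorphism $\gamma : g \Rightarrow p \circ h$ and fiberwise homotopy $H$ from $f$ to $h \circ i$.

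The main obstacle, and the step I would spend the most care on, is this cell-attachment step: making precise the sense in which a weak equivalence of topological stacks lifts a disc relative to its boundary, keeping track of all the $2$-isomorphisms, and ensuring the fiberwise homotopies glue compatibly over the inductive stages (one must check that the homotopy produced at stage $k$ restricts correctly to the homotopy built at stage $k-1$, so that in the colimit one genuinely obtains a fiberwise homotopy relative to the prescribed $\alpha$). A cleaner route — which I would use if the bookkeeping becomes unwieldy — is to invoke \cref{L:SerrevsSerre}: it already gives (W)LLP of $p$ against any cellular inclusion of finite CW complexes inducing isomorphisms on all $\pi_n$; so it remains only to observe that \emph{every} finite CW inclusion $i : A \hookrightarrow B$ can, after the fact that $p$ is a weak equivalence, be compared to such a ``$\pi_\ast$-iso'' inclusion, or more directly to factor the lifting problem through the mapping cylinder of $i$ and use that $p$ is a weak equivalence to solve the obstruction-theoretic part cell by cell. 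Either way, the role of the hypothesis ``$\stack{X}$ is Serre'' is exactly to guarantee that $p$ is a (weak) Serre fibration in a geometric sense compatible with choosing atlases, so that the cell-by-cell lifting can be performed and the constructions of \cite{No14} apply.
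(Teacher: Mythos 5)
The paper's own proof is a two-line reduction to the literature: since $\stack{X}$ is Serre, $p$ is a \emph{Serre morphism} in the sense of (\cite{No14}, Definition 2.2) by (\cite{No14}, Lemma 2.4), and the statement is then exactly (\cite{No14}, Proposition 5.4), whose proof also covers the weak case. You instead attempt a from-scratch argument, and while your ``only if'' direction is fine in outline, the ``if'' direction has a genuine gap: the entire content of the proposition is concentrated in the cell-attachment step you explicitly defer, namely that a (weak) Serre fibration of stacks which is a weak equivalence admits (weak) lifts against $(D^n,S^{n-1})$. Even for topological spaces this step is not formal --- one needs the long exact sequence of the fibration (or equivalent obstruction theory) to convert ``$p_*$ is an isomorphism on all $\pi_n$'' into ``relative lifts exist'' --- and for stacks it requires the homotopy-fiber machinery of \cite{No14}, Section 5, which is precisely where the hypothesis that $\stack{X}$ is Serre enters (via the Serre-morphism condition). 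Describing this as ``the step I would spend the most care on'' without carrying it out leaves the proof incomplete at exactly the point where the proposition has content.

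Your proposed fallback via \cref{L:SerrevsSerre} does not close this gap either. That lemma only applies to cellular inclusions $A\hookrightarrow B$ which \emph{themselves} induce isomorphisms on all $\pi_n$ (it is proved by exhibiting such an inclusion as a retract of $B\to B\times[0,1]$), whereas a general finite CW inclusion --- e.g.\ $S^{n-1}\hookrightarrow D^n$ --- is not of this form. Here it is the weak-equivalence hypothesis on $p$, not on $i$, that must do the work, and your sketch supplies no mechanism for trading the one for the other. The correct short route is the paper's: cite (\cite{No14}, Lemma 2.4) and (\cite{No14}, Proposition 5.4).
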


\begin{proof}
By (\cite{No14}, Lemma 2.4), every morphism $p :\stack{X} \to \stack{Y}$
of topological stacks with $\stack{X}$ a Serre stack is a
Serre morphism (in the sense of \cite{No14}, Definition 2.2).
The result now follows from (\cite{No14}, Proposition 5.4). Note that 
(\cite{No14}, Proposition 5.4) is only stated for trivial Serre fibration, but it is also true
for trivial weak  Serre fibration; the first paragraph of the given proof 
(minus the last sentence) is in fact the proof of the statement for trivial weak  
Serre fibration.
\end{proof}

\section{Reedy fibrations of stacks}
\label{S:ModelStr}

In this section, we introduce Reedy fibrations between presheaves
of groupoids (\cref{D:reedy_cond}) and establish some of their basic
properties.

\subsection{Model structure on $\gpd$}
\label{SS:modelgpd}

Let $\gpd$ denote the 2-category of groupoids. In this section we discuss the model
structure on the underlying 1-category of $\gpd$.

\begin{defn}
\label{D:gpd_fib}
Let $p: G \to H$ be a morphism in $\gpd$. We say that $p$ is a \emph{fibration} if for any 
$x \in G$ and any isomorphism 	$\varphi :y \to p(x)$ in $H$, there exists an isomorphism 
$\psi:z \to x$ in $G$ such that $p(\psi)=\varphi$.
In the literature, this is commonly referred to as an \emph{isofibration}.
\end{defn}

There is a model category structure on the category $\gpd$ of groupoids where
	\begin{itemize}
		\item weak equivalences are equivalences of groupoids;
		\item cofibrations are maps that are  injective on the set of objects;
		\item fibrations are  as in \cref{D:gpd_fib}.
	\end{itemize}
We refer the reader to (\cite{Hollander08}, Theorem 2.1) for more detail
and further references.

\begin{lemma}
\label{L:strictvs2-categorical}  
Consider the following diagram in $\gpd$:
\[
\begin{tikzpicture}[scale=1]

	\node (b) at (0,0) {$H$};
	\node (c) at (2,2) {$K$};
	\node (d) at (2,0) {$G$};
	
	\draw[->] (b) to node[below] {} (d);
	\draw[->] (c) to node[right] {p} (d);

\end{tikzpicture}
\]
Suppose that $p$ is a fibration. Then, the 
natural map of groupoids
 	 \[ H\times_{G}K \to H\ttimes_{G}K
	 \]
is an equivalence.	 
\end{lemma}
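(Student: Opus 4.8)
The plan is to exhibit an explicit quasi-inverse to the natural inclusion $\iota : H\times_G K \to H\ttimes_G K$ and to construct natural isomorphisms showing that $\iota$ composed with this quasi-inverse is isomorphic to the identity on each side. Recall that $\iota$ is always fully faithful (as noted in \cref{SS:strict}), so it suffices to show it is essentially surjective; constructing the quasi-inverse is the cleanest way to do this while keeping track of naturality.

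First I would use the isofibration hypothesis on $p$ to rectify an arbitrary object. Given an object $(x,y,\varphi)$ of $H\ttimes_G K$, with $x\in H$, $y\in K$ and $\varphi : q(x)\to p(y)$ in $G$, apply \cref{D:gpd_fib} to the object $y\in K$ and the isomorphism $\varphi^{-1} : p(y)\to q(x)$ in $G$: this yields an object $y'\in K$ and an isomorphism $\psi : y'\to y$ in $K$ with $p(\psi)=\varphi^{-1}$ — equivalently $p(\psi^{-1})=\varphi$, so $p(y')=q(x)$. Hence $(x,y')$ is an honest object of the strict fiber product $H\times_G K$, and $(\id_x,\psi) : (x,y',\id)\to (x,y,\varphi)$ is an isomorphism in $H\ttimes_G K$ (the compatibility condition $\varphi\circ q(\id_x) = p(\psi)\circ \id$ reads $\varphi = p(\psi)$, which holds). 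This already proves essential surjectivity, hence that $\iota$ is an equivalence; combined with full faithfulness we are done. If one prefers an explicit functor, choose such a lift $(y', \psi)$ for every object once and for all (using the axiom of choice), define $r(x,y,\varphi) := (x,y')$ on objects, and on a morphism $(\alpha,\beta):(x,y,\varphi)\to(x_1,y_1,\varphi_1)$ define $r(\alpha,\beta) := (\alpha, \psi_1^{-1}\circ\beta\circ\psi)$; one checks this lands in $H\times_G K$ because applying $p$ gives $p(\psi_1^{-1})p(\beta)p(\psi) = \varphi_1 \, p(\beta)\, \varphi^{-1} = q(\alpha)$ by the compatibility of $(\alpha,\beta)$, and the map $(\id_x,\psi)$ assembles into a natural isomorphism $\iota\circ r \Rightarrow \id$.

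The step I expect to be the only real (if mild) obstacle is bookkeeping the 2-cells: verifying that the collection of chosen isomorphisms $(\id_x,\psi_{(x,y,\varphi)})$ is natural in $(x,y,\varphi)$, i.e.\ that $r$ as defined is functorial and that $\iota\circ r\cong\id$, $r\circ\iota\cong\id$. All of these reduce to the single compatibility identity $\varphi'\circ q(\alpha) = p(\beta)\circ\varphi$ defining morphisms in the 2-fiber product, together with functoriality of $p$ and $q$; there is no geometry involved, only diagram-chasing in groupoids. Since only the essential-surjectivity half genuinely uses the isofibration hypothesis, I would present that part first and remark that full faithfulness is automatic and independent of $p$.
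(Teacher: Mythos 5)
Your proof is correct and is exactly the argument the paper has in mind: the inclusion is always fully faithful, and essential surjectivity follows from the isofibration property by rectifying $\varphi$, so the two approaches coincide. The only quibble is a direction slip: with \cref{D:gpd_fib} as stated you should lift $\varphi : q(x)\to p(y)$ itself (whose target is $p(y)$) to obtain $\psi : y'\to y$ in $K$ with $p(\psi)=\varphi$ and hence $p(y')=q(x)$ --- lifting $\varphi^{-1}$ points the wrong way and is inconsistent with the identity $p(\psi)=\varphi$ that your own compatibility check and the computation for $r(\alpha,\beta)$ actually use.
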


\begin{proof}
  This functor is always fully faithful (see \cref{SS:strict}). It
  is straightforward that fibrancy of
  $p$ implies essential surjectivity.
\end{proof}

\begin{lemma}
\label{L:trivcof}
A morphism $i : G \to H$ in $\gpd$ is a trivial cofibration  if and only if it 
is essentially surjective and induces an isomorphism of groupoids between $G$ 
and a full subcategory of $H$. When this is the case, $G \times K \to H\times K$ is  
a trivial cofibration for every groupoid $K$.
\end{lemma}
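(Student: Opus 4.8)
The plan is to prove the two assertions of \cref{L:trivcof} in turn. For the characterization of trivial cofibrations, recall that a trivial cofibration is a map that is simultaneously a cofibration (injective on objects) and a weak equivalence (an equivalence of groupoids). An equivalence of groupoids is by definition fully faithful and essentially surjective. So the forward direction is almost immediate: if $i : G \to H$ is a trivial cofibration, it is essentially surjective, and it is injective on objects and fully faithful, hence it identifies $G$ isomorphically with the full subcategory of $H$ spanned by the objects in the image of $i$. For the converse, if $i$ is essentially surjective and induces an isomorphism onto a full subcategory, then it is injective on objects (hence a cofibration) and fully faithful plus essentially surjective (hence an equivalence of groupoids, i.e.\ a weak equivalence), so it is a trivial cofibration.

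For the second assertion, suppose $i : G \to H$ is a trivial cofibration and let $K$ be any groupoid; I want to show $i \times \id_K : G \times K \to H \times K$ is again a trivial cofibration. Using the characterization just established, it suffices to check three things about $i \times \id_K$: that it is injective on objects, that its image is a full subcategory of $H \times K$, and that it is essentially surjective. Injectivity on objects is clear since $i$ is injective on objects and $\id_K$ is injective on objects, and objects of a product groupoid are pairs. Fullness of the image: a morphism in $H \times K$ between $(i(x), k)$ and $(i(x'), k')$ is a pair $(\beta, \kappa)$ with $\beta : i(x) \to i(x')$ in $H$ and $\kappa : k \to k'$ in $K$; since the image of $i$ is full in $H$, $\beta = i(\alpha)$ for a unique $\alpha : x \to x'$ in $G$, so $(\beta,\kappa) = (i \times \id_K)(\alpha, \kappa)$ lies in the image, and uniqueness gives the isomorphism onto a full subcategory. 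Essential surjectivity: given $(h, k) \in H \times K$, essential surjectivity of $i$ gives $x \in G$ and an isomorphism $h \cong i(x)$ in $H$, so $(h, k) \cong (i(x), k) = (i \times \id_K)(x, k)$ in $H \times K$.

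I do not anticipate a serious obstacle here; the statement is essentially a bookkeeping exercise once the characterization of trivial cofibrations is in place, and the only mildly delicate point is making sure that ``induces an isomorphism of groupoids between $G$ and a full subcategory of $H$'' is the right reformulation of ``injective on objects and fully faithful'' — which it is, since a functor that is injective on objects and fully faithful restricts to an isomorphism onto the full subcategory on the image objects, and conversely. The argument for the product is then a direct verification using that products of groupoids are computed objectwise on objects and on morphisms.

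\begin{proof}
Recall that in the model structure on $\gpd$, a trivial cofibration is a functor that is both injective on objects and an equivalence of groupoids, i.e.\ fully faithful and essentially surjective. A functor that is injective on objects and fully faithful restricts to an isomorphism of groupoids from $G$ onto the full subcategory of $H$ spanned by the objects in its image; conversely, a functor that induces such an isomorphism is injective on objects and fully faithful. This proves the claimed characterization: $i : G \to H$ is a trivial cofibration if and only if it is essentially surjective and identifies $G$ with a full subcategory of $H$.

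Now suppose $i : G \to H$ is a trivial cofibration and let $K$ be any groupoid. Since objects and morphisms of $G \times K$ and $H \times K$ are computed componentwise, $i \times \id_K$ is injective on objects (as $i$ is). It is fully faithful: a morphism in $H \times K$ from $(i(x),k)$ to $(i(x'),k')$ is a pair $(\beta,\kappa)$ with $\beta : i(x)\to i(x')$ in $H$ and $\kappa : k \to k'$ in $K$, and since $i$ is full and faithful there is a unique $\alpha : x \to x'$ in $G$ with $i(\alpha)=\beta$, so $(\beta,\kappa)=(i\times\id_K)(\alpha,\kappa)$ and this $(\alpha,\kappa)$ is unique. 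Finally, $i \times \id_K$ is essentially surjective: given $(h,k)\in H\times K$, essential surjectivity of $i$ provides $x \in G$ and an isomorphism $h \cong i(x)$ in $H$, whence $(h,k)\cong (i(x),k)=(i\times\id_K)(x,k)$ in $H \times K$. By the characterization above, $G\times K \to H\times K$ is a trivial cofibration.
\end{proof}
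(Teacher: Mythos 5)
Your proof is correct and is exactly the direct verification the paper has in mind: the paper's own proof of this lemma is simply ``Straightforward,'' and your argument supplies the expected details (trivial cofibration $=$ injective on objects $+$ fully faithful $+$ essentially surjective, which is the stated characterization, and the componentwise check for $-\times K$).
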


\begin{proof}
Straightforward.
\end{proof}

\begin{prop}
\label{P:gpd}
The above model structure on $\gpd$ is left proper, simplicial, cofibrantly 
generated, combinatorial and monoidal (with respect to cartesian product).
\end{prop}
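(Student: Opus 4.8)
The plan is to verify the five listed properties in turn, arranging the argument so that \emph{simplicial} is deduced from \emph{monoidal}, and reducing \emph{combinatorial} to \emph{cofibrantly generated}. Two observations are immediate. First, since the empty groupoid has no objects, the unique map $\emptyset\to G$ is (vacuously) injective on objects for every $G$, so every object of $\gpd$ is cofibrant; hence the model structure is left proper (and, as it happens, also right proper, since every groupoid is fibrant). Second, the underlying $1$-category of $\gpd$ is locally finitely presentable, so once we exhibit a set of generating cofibrations and a set of generating trivial cofibrations the model structure is automatically combinatorial.

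For cofibrant generation I would take as generating cofibrations the three maps $\emptyset\to\mathbf{1}$, $\mathbf{2}\to\mathbf{I}$ and $B\mathbb{Z}\to\mathbf{1}$, where $\mathbf{1}$ is the terminal groupoid, $\mathbf{2}=\mathbf{1}\coprod\mathbf{1}$, $\mathbf{I}$ is the free-living isomorphism (two objects and a unique isomorphism between them), and $B\mathbb{Z}$ is the one-object groupoid with automorphism group $\mathbb{Z}$; and as the single generating trivial cofibration the inclusion $\mathbf{1}\to\mathbf{I}$ of one of the two objects. A short direct computation identifies the maps with the right lifting property against $\mathbf{1}\to\mathbf{I}$ as exactly the fibrations of \cref{D:gpd_fib}, and the maps with the right lifting property against all three generating cofibrations as exactly the functors that are surjective on objects and fully faithful, i.e.\ the trivial fibrations (the generator $B\mathbb{Z}\to\mathbf{1}$ is what forces faithfulness). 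All domains and codomains here are finite groupoids, hence finitely presentable, so there are no smallness issues, and the remaining hypotheses of the recognition theorem reduce to two routine points: that a relative cell complex built from $\mathbf{1}\to\mathbf{I}$ is an equivalence of groupoids (each pushout merely adjoins a redundant isomorphic copy of an object), and that every object-injective functor has the left lifting property against trivial fibrations (build the lift object by object using surjectivity on objects, and morphism by morphism using full faithfulness). Alternatively, one can observe that this model structure is the right transfer of the Kan--Quillen structure along the nerve $N\colon\gpd\to\sset$, whose left adjoint is the fundamental-groupoid functor $\Pi_1$, and appeal to the transfer theorem; this also records for later use that $(\Pi_1,N)$ is a Quillen adjunction.

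For the monoidal axiom with respect to the cartesian product, the unit axiom is trivial because $\mathbf{1}$ is cofibrant, so it remains to check the pushout--product axiom. Given cofibrations $i\colon A\to B$ and $j\colon C\to D$, the pushout--product
\[ q\colon (A\times D)\cup_{A\times C}(B\times C)\longrightarrow B\times D \]
is computed on objects (pushouts of groupoids along object-injective maps are computed on objects) as the pushout--product in $\set$ of the injections $\ob(A)\hookrightarrow\ob(B)$ and $\ob(C)\hookrightarrow\ob(D)$, which is again an injection; so $q$ is always a cofibration. If in addition $i$ is a trivial cofibration, then \cref{L:trivcof} gives that $i\times\id_C$ and $i\times\id_D$ are trivial cofibrations; hence $A\times D\to(A\times D)\cup_{A\times C}(B\times C)$, being a pushout of the trivial cofibration $i\times\id_C$, is a weak equivalence, and since its composite with $q$ is the trivial cofibration $i\times\id_D$, the two-out-of-three property makes $q$ a weak equivalence, hence a trivial cofibration. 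The case in which $j$ is a trivial cofibration is symmetric.

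Finally, for the simplicial structure, enrich $\gpd$ over $\sset$ by $\operatorname{Map}(G,H):=N[G,H]$ (nerve of the internal-hom groupoid), with tensor $G\otimes K:=G\times\Pi_1K$ and cotensor $G^K:=[\Pi_1K,G]$. The functor $\Pi_1\colon\sset\to\gpd$ is strong monoidal (it preserves colimits and $\Pi_1(\Delta^m\times\Delta^n)\cong\Pi_1\Delta^m\times\Pi_1\Delta^n$) and left Quillen (it sends monomorphisms to object-injective functors and each horn inclusion $\Lambda^n_k\hookrightarrow\Delta^n$ to a trivial cofibration of groupoids). Therefore $G\otimes(-)$ factors as $G\times\Pi_1(-)$, and the pushout--product of an object-injective functor with $\Pi_1$ of a monomorphism of simplicial sets is, by the monoidal property just proved, a cofibration of groupoids, trivial as soon as either factor is trivial; this is precisely the SM7 axiom for the above enrichment, and tensoring and cotensoring are clearly adjoint to it. The step demanding the most care is the trivial-cofibration half of the pushout--product axiom: this is where the actual homotopy theory of groupoids (through \cref{L:trivcof}) enters, rather than mere set-level bookkeeping, and it is also the engine behind the simplicial enrichment; the remaining steps are formal.
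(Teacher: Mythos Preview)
Your argument is correct and more self-contained than the paper's. The paper simply cites \cite{Hollander08}, Theorem 2.1 for the properties \emph{left proper}, \emph{simplicial}, and \emph{cofibrantly generated}, then deduces \emph{combinatorial} from local presentability exactly as you do, and finally verifies the monoidal (pushout--product) axiom by the same mechanism you use: the object-level part is set-theoretic, and the trivial-cofibration half is \cref{L:trivcof} plus two-out-of-three applied to $A\times D\to (A\times D)\cup_{A\times C}(B\times C)\to B\times D$. The paper also separately records that $\times$ preserves colimits in each variable (via the internal hom), which you use implicitly when you invoke $[G,H]$.

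Where you genuinely diverge is in the surrounding architecture. You supply explicit generating sets (your choice $B\mathbb{Z}\to\mathbf{1}$ to detect faithfulness is a legitimate alternative to the more common parallel-pair map), you obtain left properness from the observation that every object is cofibrant rather than by citation, and---most notably---you \emph{derive} the simplicial enrichment from the monoidal structure by tensoring with $\Pi_1$, using that $\Pi_1$ is left Quillen and strong monoidal. The paper does not make this reduction; it treats \emph{simplicial} as an input from the literature. Your organization is conceptually cleaner and exposes why SM7 holds, at the cost of having to check a few small facts about $\Pi_1$ (that it preserves products and sends horn inclusions to trivial cofibrations). Both routes land on the same essential computation, namely the pushout--product argument via \cref{L:trivcof}.
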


\begin{proof}
The properties left proper, simplicial and cofibrantly generated are proved in
(\cite{Hollander08}, Theorem 2.1). Since $\gpd$  is cofibrantly generated and
locally presentable, it is, by  definition, combinatorial.

To check that the model structure is monoidal we need to verify conditions ($i$)-($iii$)
of (\cite{HTT}, Definition A.3.1.2). Conditions ($ii$) and ($iii$) are obvious. To check
($i$) we have to show that the cartesian product $\times : \gpd\times \gpd \to \gpd$ is
a left Quillen bifunctor. That is, the following two conditions are satisfied:
\begin{itemize}
 \item[(a)] Let $i : A \to A'$ and $j : B \to B'$ be cofibrations in $\gpd$. Then,
      the induced map
    \[i\wedge j : (A'\times B) \underset{A\times B}{\coprod} (A\times B') \to A'\times B'
     \]
      is a cofibration in $\gpd$. Moreover, if either $i$ or $j$ is a trivial cofibration, 
      then $i\wedge j$ is also a trivial cofibration.
 \item[(b)] The cartesian product preserves small colimits separately in each variable.
\end{itemize}
The first part of (a) is easy as it only concerns the object sets of the groupoids in
question, and the corresponding statement is true in the category of sets. To prove the
second part of (a), assume that $i : A \to A'$ is a trivial cofibration.
The claim follows from \cref{L:trivcof} and two-out-of-three applied to
\[ A\times B' \to  (A'\times B) \underset{A\times B}{\coprod} (A\times B') \to A'\times B'.
\]
Condition (b)  can be checked as follows (this argument was suggested to us
by the referee). Let $K$ be an arbitrary groupoid. We have
 \begin{align*}
    \Hom_{\gpd}(\colim_{\alpha}(G_{\alpha} \times H), K)   
    & \cong  \lim_{\alpha} \Hom_{\gpd}(G_{\alpha} \times H, K) \\
    & \cong   \lim_{\alpha} \Hom_{\gpd}(G_{\alpha}, \Hom_{\gpd}(H, K))   \\ 
    & \cong  \Hom_{\gpd}(\colim_{\alpha} G_{\alpha}, \Hom_{\gpd}(H, K)) \\ 
    & \cong  \Hom_{\gpd}((\colim_{\alpha} G_{\alpha}) \times H, K).
\end{align*}
Thus, $\colim_{\alpha}(G_{\alpha}) \times H\cong (\colim_{\alpha} G_{\alpha}) \times H$.
\end{proof}

\begin{prop}
\label{P:excellent}
The model structure on $\gpd$ is excellent in
the sense of (\cite{HTT}, Definition A.3.2.16).
\end{prop}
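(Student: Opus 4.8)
The plan is to verify the four axioms (A1)--(A4) of (\cite{HTT}, Definition A.3.2.16), viewing $\gpd$ as a monoidal model category under the cartesian product. The ambient hypotheses --- that $\gpd$ be simplicial, combinatorial, and a monoidal model category (in particular that $\times$ satisfy the pushout--product axiom) --- together with axiom (A1) are already contained in \cref{P:gpd}, so only (A2), (A3) and (A4) remain to be checked.

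For (A2) I would argue as follows. A monomorphism $i:G\to H$ in $\gpd$ is injective on objects, since applying the defining property of a monomorphism to the two functors out of the terminal groupoid that pick out a pair of objects with the same image forces those objects to coincide; hence every monomorphism is a cofibration in the model structure on $\gpd$. If $i:A\to A'$ and $j:B\to B'$ are cofibrations, i.e.\ injective on objects, then the object map of $i\times j$ is $\ob(i)\times\ob(j)$, a product of injective maps of sets and hence injective, so $i\times j$ is a cofibration; the same computation handles arbitrary products. For (A3) I would first recall that filtered colimits of groupoids are created in $\category{Cat}$ and computed ``levelwise'': $\ob(\colim_\alpha G_\alpha)=\colim_\alpha\ob(G_\alpha)$, and for compatible representatives $x_\alpha,y_\alpha$ one has $\Hom_{\colim_\alpha G_\alpha}(x,y)=\colim_\beta\Hom_{G_\beta}(x_\beta,y_\beta)$. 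A direct diagram chase with these formulas, using filteredness to compare representatives, then shows that a filtered colimit $\colim_\alpha f_\alpha$ of equivalences $f_\alpha:G_\alpha\to H_\alpha$ is again essentially surjective and fully faithful, hence an equivalence. (Alternatively one may invoke the nerve $N:\gpd\to\sset$, which preserves filtered colimits --- since $N(G)_n=\Hom_{\category{Cat}}([n],G)$ and $[n]$ is finitely presentable --- and preserves and reflects weak equivalences, thereby reducing (A3) to the corresponding stability statement in $\sset$.)

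The substantive point is axiom (A4), the invertibility hypothesis (\cite{HTT}, Definition A.3.2.12), and this is where I expect essentially all of the work to lie. The strategy would be direct verification, exploiting the fact that in $\gpd$ everything is explicitly computable: every object is both fibrant and cofibrant, the free $\gpd$-enriched category on a single arrow and the ``contractible interval'' $\gpd$-enriched category (two objects, all four mapping groupoids equal to the unit, i.e.\ the terminal groupoid) are completely explicit, and pushouts of $\gpd$-enriched categories along the relevant free functors can be computed by generators and relations. Since a morphism of $\gpd$-enriched categories is a weak equivalence exactly when it is essentially surjective (up to homotopy) on objects and a weak equivalence on each mapping groupoid, the map appearing in the invertibility hypothesis can be checked by computing its effect on $\pi_0$ and on the individual mapping groupoids. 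The main obstacle will be the bookkeeping: one must track the formal inverses adjoined by the pushout and confirm that adjoining them does not alter the homotopy type of the mapping groupoids. I would organize this by looking to reduce, via the closure properties of trivial cofibrations (retracts, cobase change, transfinite composition), to the single generating trivial cofibration of the model structure on $\gpd$ --- the inclusion of a point into the contractible two-object groupoid --- and then checking that one case by hand.
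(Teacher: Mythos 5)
Your treatment of the first four axioms is fine and matches what the paper does: the combinatorial and monoidal conditions are already contained in \cref{P:gpd}, and the checks that monomorphisms are cofibrations, that cofibrations are stable under products, and that weak equivalences are stable under filtered colimits are exactly the routine verifications the paper dismisses as ``straightforward''. Note only that Definition A.3.2.16 of \cite{HTT} has \emph{five} axioms; the invertibility hypothesis is (A5), not (A4), though you do cite the correct Definition A.3.2.12 for its statement.

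The genuine gap is in your plan for the invertibility hypothesis. That hypothesis is not a statement about trivial cofibrations in $\gpd$: it asserts that for \emph{every} fibrant $\gpd$-enriched category $\mathcal{C}$ and every morphism $f$ of $\mathcal{C}$ that becomes invertible in $h\mathcal{C}$, the localization $\mathcal{C}\to\mathcal{C}[f^{-1}]$ --- a homotopy pushout in the category of $\gpd$-enriched categories along $[1]_{\gpd}\to E$ --- is a weak equivalence. There is no ``reduction via retracts, cobase change and transfinite composition to the generating trivial cofibration of $\gpd$'': the quantification is over arbitrary $\mathcal{C}$ and $f$, and the relevant pushout lives in the category of $\gpd$-enriched categories rather than in $\gpd$ itself, so the saturation machinery for trivial cofibrations has nothing to act on. Stripped of that step, your plan amounts to computing the mapping groupoids of a generators-and-relations localization of a groupoid-enriched category and showing the localization map induces equivalences on them; this is precisely the hard Dwyer--Kan-type computation one wants to avoid, and you do not carry it out. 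The paper sidesteps all of this: by (\cite{HTT}, Lemma A.3.2.20) the invertibility hypothesis transfers along a suitable monoidal left Quillen functor, and applying this to the fundamental groupoid functor $\Pi_1:\sset\to\gpd$, with $\sset$ already known to satisfy the hypothesis, finishes the proof in one line. You should either supply the localization computation in full or, better, use this transfer argument.
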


\begin{proof}
Axioms (A1)-(A4) of [ibid.] are straightforward to check. Axiom (A5), the 
Invertibility Hypothesis, follows from (\cite{HTT}, Lemma A.3.2.20) applied to 
the fundamental groupoid functor $\Pi_1 : \sset \to \gpd $.
\end{proof}

\subsection{Injective model structure on $[\category{C}^{op},\gpd]$}
\label{SS:injective}

Let $\category{C}$ be a small category. Since the model structure on $\gpd$ is 
combinatorial (\cref{P:gpd}), by (\cite{HTT}, Proposition A.2.8.2) there is a  
model structure on the category $[\category{C}^{\text{op}},\gpd]$ of presheaves 
of groupoids, called the \emph{injective model structure}, where
	\begin{itemize}
		\item weak equivalences are the objectwise weak equivalences as in \cref{SS:modelgpd};
		\item cofibrations are the objectwise cofibrations as in \cref{SS:modelgpd};
		\item fibrations have the right lifting property with respect to the trivial
		 cofibrations.
	\end{itemize}
We refer the reader to (\cite{HTT}, A.2.8) for more details on the injective
model structure.

\begin{prop}
\label{P:enriched}
The injective model structure on $[\category{C}^{\text{op}},\gpd]$ is $\gpd$-enriched
in the sense of (\cite{HTT}, Definition A.3.1.5).
\end{prop}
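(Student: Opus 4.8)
The plan is to unwind (\cite{HTT}, Definition A.3.1.5): we must exhibit $[\category{C}^{\text{op}},\gpd]$ as a category tensored and cotensored over $\gpd$, and then check that the tensor bifunctor $\otimes : \gpd \times [\category{C}^{\text{op}},\gpd] \to [\category{C}^{\text{op}},\gpd]$ is a left Quillen bifunctor, with respect to the injective model structure on the target and the model structure on $\gpd$ of \cref{SS:modelgpd}.

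First I would set up the enrichment objectwise. For $K \in \gpd$ and $\stack{X} \in [\category{C}^{\text{op}},\gpd]$ put $(K\otimes\stack{X})(T):=K\times\stack{X}(T)$ and $(\stack{X}^{K})(T):=\Hom_{\gpd}(K,\stack{X}(T))$, the latter being a presheaf of groupoids since $\Hom_{\gpd}(K,-)$ is a functor $\gpd\to\gpd$; let the enriched hom $\underline{\Hom}(\stack{X},\stack{Y})$ be the end $\int_{T\in\category{C}}\Hom_{\gpd}(\stack{X}(T),\stack{Y}(T))$, which exists as a small limit in $\gpd$ because $\category{C}$ is small and $\gpd$ is complete. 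The defining adjunction isomorphisms of a tensored, cotensored enrichment then follow formally from the cartesian closedness of $\gpd$ (the internal hom $\Hom_{\gpd}(-,-)$ already used in the proof of \cref{P:gpd}\,(b)) together with the fact that limits and colimits in $[\category{C}^{\text{op}},\gpd]$ are computed objectwise.

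Next I would verify the left Quillen bifunctor condition. By the definition of the injective model structure (\cref{SS:injective}), both cofibrations and weak equivalences in $[\category{C}^{\text{op}},\gpd]$ are detected objectwise; moreover colimits and the tensor $\otimes$ are computed objectwise. Hence, for a cofibration $i:A\to A'$ in $\gpd$ and an injective cofibration $j:\stack{X}\to\stack{X}'$, the pushout–product
\[ i\wedge j : (A'\otimes\stack{X})\underset{A\otimes\stack{X}}{\coprod}(A\otimes\stack{X}') \longrightarrow A'\otimes\stack{X}' \]
evaluated at any object $T$ of $\category{C}$ is exactly the pushout–product in $\gpd$ of $i$ with $j(T)$, which by \cref{P:gpd}\,(a) is a cofibration in $\gpd$, and a trivial cofibration whenever $i$ or $j(T)$ is. Therefore $i\wedge j$ is an injective cofibration, and it is a trivial cofibration whenever $i$ or $j$ is one. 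The remaining requirement that $\otimes$ preserve small colimits separately in each variable is immediate from the objectwise description together with \cref{P:gpd}\,(b).

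I do not expect a genuine obstacle here: every assertion is reduced objectwise to the statement that $\gpd$ is a monoidal model category, which is \cref{P:gpd}, using only that the injective model structure and the tensor, cotensor and colimit operations are all objectwise. The only points requiring a little care are checking that the cotensor $\stack{X}^{K}$ is honestly a presheaf of groupoids and that the enriched hom exists as a small limit; neither is difficult, and both were noted above.
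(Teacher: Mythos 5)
Your proof is correct, but it takes a genuinely different route from the paper's. The paper disposes of this proposition in one line by citing the general result (\cite{HTT}, Remark A.3.3.4), which asserts that for a combinatorial monoidal model category such as $\gpd$ (combinatoriality being supplied by \cref{P:gpd}), the injective model structure on a functor category is automatically enriched. You instead verify Definition A.3.1.5 by hand: you make the tensor, cotensor and enriched hom explicit objectwise, and then observe that, because injective cofibrations, weak equivalences, colimits and the tensor are all computed objectwise, the pushout--product axiom and the colimit-preservation condition reduce pointwise to the monoidal model category axioms for $\gpd$ already established in \cref{P:gpd}. Your reduction is sound --- in particular a trivial cofibration in the injective structure is exactly an objectwise trivial cofibration, so the ``trivial'' half of the pushout--product axiom does follow pointwise from \cref{P:gpd}(a) --- and your construction of the cotensor and of the enriched hom as an end over the small category $\category{C}$ is the standard one. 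What the paper's citation buys is brevity and uniformity (the same remark covers the Reedy/injective structure on $\sgpd$ used in \cref{P:reedy cond2}); what your argument buys is self-containedness and an explicit description of the enrichment, at the cost of rehearsing a routine objectwise check.
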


\begin{proof}
 This follows from (\cite{HTT}, Remark A.3.3.4).
\end{proof}

We are particularly interested in the cases $\category{C}=\topspace$  and 
$\category{C}=\Delta$. In the case $\category{C}=\Delta$,
we have an explicit description of fibrations thanks to \cref{P:reedy=inj} below.

\begin{rem}{\label{R:universe}}
 The smallness assumption on $\category{C}$ is to allow us to quote results from
 Appendix A of
 \cite{HTT}. As indicated at the beginning of Appendix A of \cite{HTT}, this is not
 a restrictive assumption as we can always fix a Grothendieck universe. For this reason,
 our treating $\category{C}=\topspace$ as a small category is not problematic.
\end{rem}

\subsection{Reedy model structure on $\sgpd$}
\label{SS:reedy}

The Reedy model  structure on the category of simplicial groupoids
$\sgpd=[\Deltacat^{\text{op}},\gpd]$ is defined as follows:
  	\begin{itemize}
		\item weak equivalences are the objectwise weak equivalences;
		\item cofibrations are morphisms $X \to Y$ such that for every $n$ the map
	        \begin{align*}
		          L_nY \coprod_{L_nX} X_n \to Y_n
	        \end{align*}
	       is a cofibration of groupoids (as in \cref{SS:modelgpd});
		\item fibrations are morphisms  $X \to Y$ such that for every  $n$ the map
	      \begin{align*}
		    X_n \to M_nX \times_{M_nY} Y_n
	      \end{align*}
	        is a fibration of groupoids (as in \cref{SS:modelgpd}).
	\end{itemize}

Here, $L_nX$ stands for the latching object
	\begin{align*}
		L_nX:= \displaystyle \colim_{[k]\underset{\neq}{\twoheadrightarrow}[n]} X_k,
	\end{align*}
and $M_nX$ stands for the	matching object
	\begin{align*}
		M_nX:=\displaystyle \lim_{[n]\underset{\neq}{\hookrightarrow}[k]} X_k.
	\end{align*}

Let us unravel the above definitions. First of all, recall that the matching object
$M_nX$ can be alternatively described by
   \[M_nX=\Hom_{\sgpd}(\partial\Delta^n,X),
   \]
where we regard the simplicial set $\partial\Delta^n$ as a simplicial groupoid.
The map $X_n \to M_nX$ is the
one induced by the inclusion $\partial\Delta^n \hookrightarrow \Delta^n$.

The Reedy fibration condition can now be restated as saying that
  \[\Hom_{\sgpd}(\Delta^n,X)  \to
   \Hom_{\sgpd}(\partial\Delta^n,X) \times_{\Hom_{\sgpd}(\partial\Delta^n,Y)}
  \Hom_{\sgpd}(\Delta^n,Y)
  \]
is a fibration of groupoids.

\begin{lemma}
\label{L:reedyforsets}
Let $X$ and $Y$ be simplicial sets, regarded as objects in $\sgpd$. Then, any morphism
$p : X \to Y$ is a Reedy fibration.
\end{lemma}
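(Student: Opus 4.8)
The plan is to verify the Reedy fibration condition directly from the restated version given just above: we must show that for every $n$, the map
\[
\Hom_{\sgpd}(\Delta^n,X)\to \Hom_{\sgpd}(\partial\Delta^n,X)\times_{\Hom_{\sgpd}(\partial\Delta^n,Y)}\Hom_{\sgpd}(\Delta^n,Y)
\]
is a fibration of groupoids in the sense of \cref{D:gpd_fib}. The key observation is that $X$ and $Y$, being simplicial sets, are \emph{discrete} as simplicial groupoids: each $X_n$ and $Y_n$ has only identity morphisms. Consequently every $\Hom_{\sgpd}$-groupoid appearing in the square is discrete (a set regarded as a groupoid with only identities), since $\Hom_{\sgpd}(A,Z)$ for a discrete target $Z$ has no nonidentity morphisms. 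Fibered products of discrete groupoids are again discrete, so the codomain of the map above is a discrete groupoid as well.

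First I would record the discreteness claim: for a simplicial set $Z$ viewed in $\sgpd$ and any simplicial groupoid (in particular simplicial set) $A$, the groupoid $\Hom_{\sgpd}(A,Z)$ is discrete, because a morphism between two maps $A\to Z$ would be a natural transformation whose components land in the morphisms of $Z_n$, all of which are identities; hence the only morphisms are identities and $\Hom_{\sgpd}(A,Z)$ is just the set of simplicial maps $A\to Z$. Next I would note that the strict fiber product of discrete groupoids is discrete (it is just the ordinary fiber product of sets, regarded as a groupoid). Therefore both the source and target of our map are discrete groupoids.

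Finally I would invoke the definition of fibration of groupoids: $p:G\to H$ is a fibration if every isomorphism in $H$ ending at $p(x)$ lifts to an isomorphism in $G$ ending at $x$. When $H$ is discrete, the only isomorphisms in $H$ are identities, and these lift trivially to identities in $G$. Hence \emph{any} morphism of groupoids with discrete codomain is automatically a fibration. Applying this with $G=\Hom_{\sgpd}(\Delta^n,X)$ and $H=\Hom_{\sgpd}(\partial\Delta^n,X)\times_{\Hom_{\sgpd}(\partial\Delta^n,Y)}\Hom_{\sgpd}(\Delta^n,Y)$ shows the Reedy fibration condition holds at every $n$, so $p:X\to Y$ is a Reedy fibration.

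There is no real obstacle here; the only point requiring a moment's care is spelling out why $\Hom_{\sgpd}(A,Z)$ is discrete when $Z$ is a simplicial set — i.e., unwinding that a morphism in this $\Hom$-groupoid is a simplicial-groupoid natural transformation, and that its components are forced to be identities. Everything else is formal: discreteness is preserved under fiber products, and morphisms into a discrete groupoid are trivially isofibrations.
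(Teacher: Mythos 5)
Your proof is correct and follows essentially the same route as the paper, which simply observes that the Reedy condition reduces to checking that certain maps of sets (regarded as discrete groupoids) are fibrations, which holds trivially since a discrete codomain has only identity isomorphisms to lift. Your write-up just fills in the details of that one-line argument.
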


\begin{proof}
This follows from the definition of a Reedy fibration and the fact that every map
of sets, regarded as objects in $\gpd$, is a fibration of groupoids.
\end{proof}

The Reedy cofibrations turn out to coincide with the  objectwise cofibrations. 
That is, a morphism $p : X \to Y$ of simplicial groupoids is a Reedy cofibration 
if and only if  $X_n \to Y_n$ is a cofibration of groupoids (in the sense of 
\cref{SS:modelgpd}) for all $n$. This is a consequence of the following proposition.

\begin{prop}
\label{P:reedy=inj}
	The Reedy model structure and the injective model structure on
	$\sgpd=[\Deltacat^{\text{op}},\gpd]$ coincide.
\end{prop}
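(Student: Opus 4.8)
The plan is to prove \cref{P:reedy=inj} by showing that the two model structures share the same weak equivalences and the same cofibrations; since a model structure on a given category is determined by any two of its three classes of maps, this suffices. The weak equivalences agree by definition: both are the objectwise equivalences of groupoids, so the entire content of the proposition is that the Reedy cofibrations coincide with the injective (i.e.\ objectwise) cofibrations.

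The easy inclusion is that every Reedy cofibration is an objectwise cofibration. Given a Reedy cofibration $X \to Y$, the map $L_nX \to X_n$ and the map $L_nY \to Y_n$ are cofibrations of groupoids (objectwise cofibrations follow from the Reedy condition applied degreewise, or one can invoke the general fact that Reedy cofibrations are objectwise cofibrations in any Reedy category). Then $X_n \to L_nY \coprod_{L_nX} X_n \to Y_n$ exhibits $X_n \to Y_n$ as a composite of cofibrations of groupoids: the first map is a cobase change of the cofibration $L_nX \to L_nY$ — and here I would use that cofibrations of groupoids are precisely the maps injective on objects (\cref{SS:modelgpd}), a class stable under cobase change because injectivity on objects is detected on the underlying object-sets, where pushouts of injections are injections — and the second map is a cofibration by the Reedy hypothesis. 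Hence $X_n \to Y_n$ is injective on objects for all $n$, i.e.\ an objectwise cofibration.

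For the reverse inclusion, suppose $X \to Y$ is an objectwise cofibration; I must show $L_nY \coprod_{L_nX} X_n \to Y_n$ is a cofibration of groupoids, i.e.\ injective on objects, for every $n$. The key point is again that cofibrations of groupoids are exactly the maps injective on objects, so the functor $\ob : \gpd \to \set$ "detects and creates" this class, and it preserves colimits; therefore the question reduces entirely to the category of sets. On objects the map in question is $\ob(L_nY) \coprod_{\ob(L_nX)} \ob(X_n) \to \ob(Y_n)$, and since $\ob$ commutes with the latching colimit this is the corresponding latching map of \emph{simplicial sets} $\ob(X_\bullet) \to \ob(Y_\bullet)$, which is a monomorphism by hypothesis. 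So I only need the classical fact that a monomorphism of simplicial sets is a Reedy cofibration with respect to the (mono, epi, iso) structure on $\set$ — equivalently, that for a monomorphism of simplicial sets the latching maps $L_nY \coprod_{L_nX} X_n \to Y_n$ are injective. This is standard (it is the statement that the Reedy model structure on simplicial sets built from the trivial model structure on $\set$ has monomorphisms as cofibrations, or can be seen directly via the skeletal filtration: $Y_n \setminus (L_nY \cup X_n)$ injects into $Y_n$ because distinct nondegenerate simplices of $Y$ not coming from $X$ remain distinct). Feeding this back up through $\ob$ gives that $L_nY \coprod_{L_nX} X_n \to Y_n$ is injective on objects, hence a cofibration of groupoids.

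I expect the main (though modest) obstacle to be the bookkeeping around the two colimit/latching computations and making precise the claim that $\ob : \gpd \to \set$ both preserves the relevant colimits and reflects cofibrations — i.e.\ that the latching object of a simplicial groupoid has object-set the latching object of the underlying simplicial set, and that a map of groupoids is a cofibration iff it is so on objects. Once these two reductions are in place, everything descends to the elementary statement about monomorphisms of simplicial sets, which may simply be cited. I would close by noting the stated corollary: since Reedy cofibrations $=$ injective cofibrations $=$ objectwise cofibrations, a map $p : X \to Y$ of simplicial groupoids is a Reedy cofibration exactly when $X_n \to Y_n$ is a cofibration of groupoids for every $n$.
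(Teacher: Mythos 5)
Your proof is correct, but it goes by a genuinely different route than the paper. You compare the \emph{cofibrations} of the two structures: since both have the objectwise equivalences as weak equivalences, and a model structure is determined by its weak equivalences and cofibrations, it suffices to show Reedy cofibrations $=$ objectwise cofibrations. Your key reduction — that $\ob : \gpd \to \set$ is a left adjoint (hence preserves the latching colimits and the pushout $L_nY \coprod_{L_nX} X_n$) and both detects and reflects cofibrations of groupoids — transports the whole question to the classical fact that a levelwise monomorphism of simplicial sets has injective relative latching maps (the Eilenberg--Zilber lemma, i.e.\ the ``elegance'' of $\Deltacat$); the easy converse inclusion is the standard fact that Reedy cofibrations are objectwise cofibrations. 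The paper instead compares the \emph{fibrations}: it runs the chain Reedy in $\sgpd$ $\Leftrightarrow$ Reedy in $[\Deltacat^{\text{op}},\sset]$ after applying the nerve $\Leftrightarrow$ injective in $[\Deltacat^{\text{op}},\sset]$ (quoting that Reedy $=$ injective for bisimplicial sets) $\Leftrightarrow$ injective in $\sgpd$, using the adjunction $\Pi_1 \dashv N$ and the identity $\Pi_1 \circ N = \id$ to transport lifting problems back and forth. The trade-off: your argument is more self-contained and combinatorial, resting only on Eilenberg--Zilber and the description of cofibrations of groupoids as object-injections, whereas the paper's argument leans on the known coincidence of the Reedy and injective structures on bisimplicial sets but produces, as a by-product, the explicit matching-object description of the injective fibrations via the nerve, which is the form in which the proposition is actually used later (e.g.\ in \cref{P:inj=>reedy} and \cref{T:sing of WSR fib}). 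Both yield the stated corollary that Reedy cofibrations of simplicial groupoids are exactly the levelwise cofibrations.
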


\begin{proof}
We know that, by definition, the two model structures have the same weak equivalences. 	 
It remains to show that they have the same fibrations. Let 
$N : \sgpd=[\Deltacat^{\text{op}},\gpd] \to [\Deltacat^{\text{op}},\sset]$ 	
be the objectwise nerve functor, and let $\Pi_1$ be its left adjoint, the objectwise
fundamental groupoid functor. We show that the following are equivalent:
	
	\begin{enumerate}
		\item[(1)]   $p:X \to Y$ is a Reedy fibration in $\sgpd$.
		\item[(2)]   For all $n$, $N(X_n) \to M_n(N(X))\times_{M_n(N(Y))} N(Y_n)$
		is a Kan fibration of simplicial sets.
		\item[(3)]   $N(p): N(X) \to N(Y)$ is a Reedy fibration in
		$[\Deltacat^{\text{op}},\sset]$.
		\item[(4)]   $N(p):N(X) \to N(Y)$ is an injective fibration in
		$[\Deltacat^{\text{op}},\sset]$.
		\item[(5)]   $p:X \to Y$ is an injective fibration in
		$[\Deltacat^{\text{op}},\gpd]=\sgpd$.
	\end{enumerate}

(1) $\Leftrightarrow$ (2) is true since the nerve functor
preserves fiber products and $G \to H$ is a fibration of groupoids if and only
if $N(G) \to N(H)$ is a Kan fibration. (2) $\Leftrightarrow$ (3) is true by definition.
(3) $\Leftrightarrow$ (4)
follows from the fact that injective model structure on $[\Deltacat^{\text{op}},\sset]$
is the same as the Reedy model structure (\cite{HTT}, Example A.2.9.8 and Example A.2.9.21).

The implication (5) $\Rightarrow$ (4) follows from the fact that $\Pi_1$
takes a trivial cofibration of (presheaves of) simplicial sets
to a trivial cofibration of (presheaves of) groupoids.

Finally,  to prove (4) $\Rightarrow$ (5)  we use the fact that $N$ preserves
trivial cofibrations (for this use \cref{L:trivcof}) and that
$\Pi_1\circ N=\text{id}_{\sgpd}$.
More precisely, to solve a lifting problem in $[\Deltacat^{\text{op}},\gpd]$,
we can first apply $N$, solve the lifting problem
in $[\Deltacat^{\text{op}},\sset]$, and then apply $\Pi_1$ to obtain a solution to the
original lifting problem.
\end{proof}

\subsection{Reedy fibrations in $\presheafgpd$}
\label{SS:reedystack}
From now on, $\category{C}=\topspace$ (see \cref{R:universe}). We will use the notation
$\presheafgpd$ instead of $[\category{C}^{\text{op}},\gpd]$.
We begin with our main definition.

\begin{defn}
\label{D:reedy_cond}
We say that a map of presheaves of groupoids $p:\stack{X} \to \stack{Y}$ 
is a \emph{Reedy fibration} if $p\Del:\stack{X}\Del \to \stack{Y}\Del$ 
is a Reedy fibration in $\sgpd$ (see \cref{SS:reedy}).
\end{defn}

\begin{lemma}
\label{L:reedyforsets2}
Let $X$ and $Y$ be presheaves of simplicial sets, regarded as objects in $\presheafgpd$. 
Then, any morphism $p : X \to Y$ is a Reedy fibration.
\end{lemma}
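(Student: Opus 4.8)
The statement is the analogue of \cref{L:reedyforsets} one level up, in $\presheafgpd$ rather than $\sgpd$, so the natural strategy is to reduce to that lemma via \cref{D:reedy_cond}. By definition, $p : X \to Y$ is a Reedy fibration in $\presheafgpd$ precisely when $p\Del : X\Del \to Y\Del$ is a Reedy fibration in $\sgpd$. So the whole content is to identify $X\Del$ and $Y\Del$ when $X$ and $Y$ are presheaves of \emph{simplicial sets} regarded as presheaves of groupoids.

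\textbf{Key steps.} First I would recall that for a presheaf of sets $Z \in \presheafset$ we have $Z\Del = \Hom_{\presheafset}(|\Delta^\bullet|, Z)$, which is a simplicial set (a presheaf of sets on $\Deltacat$), and hence lands in the full subcategory $\sset \subset \sgpd$. For a \emph{presheaf of simplicial sets} $X$, one should think of $X$ as a simplicial object in $\presheafset$, i.e. $n \mapsto X_n$ with each $X_n \in \presheafset$; applying $\Del$ objectwise in the simplicial direction, $X\Del$ is the bisimplicial set $([m],[n]) \mapsto \Hom_{\presheafset}(|\Delta^m|, X_n)$, which is in particular a simplicial object in $\sset \subset \sgpd$. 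The crucial observation is then that $X\Del$, as an object of $\sgpd = [\Deltacat^{\mathrm{op}},\gpd]$, factors through the inclusion $\sset \hookrightarrow \sgpd$ — every value $(X\Del)_n$ is a set (viewed as a discrete groupoid), since $\Hom_{\presheafset}(|\Delta^\bullet|, X_n)$ is a set-valued presheaf. The same holds for $Y\Del$, and $p\Del$ is a morphism between such objects.

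\textbf{Conclusion.} Having established that $p\Del : X\Del \to Y\Del$ is a morphism of simplicial sets regarded as objects in $\sgpd$, \cref{L:reedyforsets} applies verbatim: any such morphism is a Reedy fibration in $\sgpd$. Therefore $p$ is a Reedy fibration in $\presheafgpd$ by \cref{D:reedy_cond}.

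\textbf{Main obstacle.} The only genuinely nontrivial point — and the one I would want to state carefully rather than wave at — is that the functor $-\Del : \presheafgpd \to \sgpd$ restricts to a functor $\presheafset \to \sset$ (equivalently, that $X\Del$ is objectwise discrete when $X$ is), so that the hypothesis of \cref{L:reedyforsets} is literally met. This is immediate from the formula $\stack{X}\Del = \Hom_{\presheafgpd}(|\Delta^\bullet|, \stack{X})$ together with \cref{L:Yoneda}-style reasoning: when $\stack{X}$ is a presheaf of sets, $\Hom_{\presheafgpd}(|\Delta^n|,\stack{X}) \cong \stack{X}(|\Delta^n|)$ is a set. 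Beyond this identification, there is nothing to prove; the lemma is really a bookkeeping consequence of \cref{L:reedyforsets} and the definitions.
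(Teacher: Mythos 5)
Your proof is correct and is essentially the paper's own argument: the paper's proof is the one-line reduction to \cref{L:reedyforsets}, and the content you isolate in your final paragraph --- that $-\Del$ carries presheaves of sets to simplicial sets because $\Hom_{\presheafgpd}(|\Delta^n|,X)\cong X(|\Delta^n|)$ is a set, so that \cref{L:reedyforsets} applies verbatim and \cref{D:reedy_cond} then gives the claim --- is exactly what that reduction rests on. The detour through bisimplicial sets in your second paragraph is unnecessary and slightly off (applying $-\Del$ levelwise to a simplicial object of $\presheafset$ produces a bisimplicial set, which is not an object of $\sgpd$, so \cref{L:reedyforsets} would not literally apply to it): as the use of this lemma in \cref{L:2fiberprod} confirms, the objects in question are presheaves of \emph{sets} regarded as discrete presheaves of groupoids, for which your final paragraph already gives the complete argument.
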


\begin{proof}
This follows from \cref{L:reedyforsets}.
\end{proof}

\begin{prop}
\label{P:inj=>reedy}
If $p:\stack{X} \to \stack{Y}$ is an injective fibration of presheaves of groupoids, 
then $p$ is a Reedy fibration.
\end{prop}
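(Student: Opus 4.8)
The plan is to reduce the statement to a lifting property of the restriction functor $-\Del$ and then invoke \cref{P:reedy=inj}, which says the Reedy and injective model structures on $\sgpd$ agree. Concretely, to show $p\Del:\stack{X}\Del \to \stack{Y}\Del$ is a Reedy fibration in $\sgpd$, it suffices by \cref{P:reedy=inj} to show it is an \emph{injective} fibration, i.e. that $p\Del$ has the right lifting property with respect to every trivial cofibration $i : A \to B$ in $\sgpd=[\Deltacat\op,\gpd]$. So I would start with an arbitrary lifting problem
\[
\begin{tikzpicture}[scale=1]
\node (a) at (0,2) {$A$};
\node (b) at (0,0) {$B$};
\node (c) at (2,2) {$\stack{X}\Del$};
\node (d) at (2,0) {$\stack{Y}\Del$};
\draw[->] (a) to node[left] {$i$} (b);
\draw[->] (a) to (c);
\draw[->] (b) to (d);
\draw[->] (c) to node[right] {$p\Del$} (d);
\end{tikzpicture}
\]
with $i$ a trivial cofibration of simplicial groupoids.

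The next step is to transport this across the adjunction $(\ )^{\sim} \dashv (\ )\Del$. By \cref{L:restriction isom} (applied at the level of the object- and morphism-presheaves of the simplicial groupoids, exactly as in its proof), maps $A \to \stack{X}\Del$ in $\sgpd$ correspond naturally and bijectively (indeed via an isomorphism of groupoids of such maps) to maps $\tilde{A} \to \stack{X}$ in $\presheafgpd$; similarly for $B$. Hence the lifting problem above is equivalent to a lifting problem
\[
\begin{tikzpicture}[scale=1]
\node (a) at (0,2) {$\tilde{A}$};
\node (b) at (0,0) {$\tilde{B}$};
\node (c) at (2,2) {$\stack{X}$};
\node (d) at (2,0) {$\stack{Y}$};
\draw[->] (a) to node[left] {$\tilde{i}$} (b);
\draw[->] (a) to (c);
\draw[->] (b) to (d);
\draw[->] (c) to node[right] {$p$} (d);
\end{tikzpicture}
\]
in $\presheafgpd$. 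So it remains to check that $\tilde{i} : \tilde{A} \to \tilde{B}$ is a trivial cofibration of presheaves of groupoids (in the injective model structure of \cref{SS:injective}), since then the hypothesis that $p$ is an injective fibration provides the required lift, and the adjunction carries it back.

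The key remaining point — and I expect this to be the main obstacle — is to verify that $(\ )^{\sim}$ sends trivial cofibrations of simplicial groupoids to trivial cofibrations of presheaves of groupoids. Since $(\ )^{\sim}$ is a left adjoint it preserves colimits, so one reduces, by the small-object-type argument, to showing it carries a generating set of trivial cofibrations of $\sgpd$ to trivial cofibrations. Using \cref{P:reedy=inj} together with the description of trivial cofibrations of groupoids in \cref{L:trivcof}, trivial cofibrations in $\sgpd$ are the objectwise ones (levelwise essentially surjective inclusions of full subcategories), generated — via $\Pi_1$ applied to generating trivial cofibrations of $[\Deltacat\op,\sset]$ — by maps built from horn inclusions $\Lambda^n_k \hookrightarrow \Delta^n$ (as simplicial groupoids) and their products. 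I would compute $(\ )^{\sim}$ on such generators: $\widetilde{\Delta^n}=|\Delta^n|$ and, as in \cref{E:Lambda}, $\widetilde{\Lambda^n_k}$ is the corresponding colimit of topological simplices, so $\widetilde{\Lambda^n_k}\hookrightarrow |\Delta^n|$ is an objectwise cofibration (injective on objects at each $T\in\topspace$, being an inclusion of topological spaces) and one checks it is an objectwise weak equivalence of groupoids. Granting the generating case, cobase change and transfinite composition — which $(\ )^{\sim}$ preserves — finish the argument. Alternatively, and perhaps more cleanly, one can observe that $(\ )^{\sim}\dashv (\ )\Del$ is a Quillen adjunction between the injective model structures (the right adjoint $(\ )\Del$ preserves injective fibrations and trivial fibrations because it is computed objectwise via $\stack{X}\mapsto \stack{X}(|\Delta^\bullet|)$ and evaluation preserves these), whence the left adjoint preserves (trivial) cofibrations automatically; then the displayed adjunction argument gives the result with no generator-chasing at all. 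I would present the Quillen-adjunction version as the main line and relegate the explicit generator computation to a remark.
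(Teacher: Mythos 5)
Your reduction is fine as far as it goes: by \cref{P:reedy=inj} it does suffice to show $p\Del$ is an injective fibration of $\sgpd$, and the adjunction of \cref{L:restriction isom} correctly converts the lifting problem to one against $\tilde{i}:\tilde{A}\to\tilde{B}$ in $\presheafgpd$. The gap is in the one step you yourself flag as the main obstacle: that $(\,)^{\sim}$ carries trivial cofibrations of $\sgpd$ to trivial cofibrations of $\presheafgpd$. Neither justification you offer works. The ``clean'' Quillen-adjunction version is circular: injective fibrations are \emph{not} characterized objectwise (that is the projective structure), so ``$(\,)\Del$ is computed by evaluation and evaluation preserves these'' does not show that $(\,)\Del$ preserves injective fibrations --- that assertion is, via \cref{P:reedy=inj}, essentially the proposition you are trying to prove. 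The generator version misidentifies the generating trivial cofibrations: $\Lambda^n_k\hookrightarrow\Delta^n$, viewed as a map of discrete simplicial groupoids, is \emph{not} a trivial cofibration in $\sgpd$, since levelwise it is a non-surjective injection of sets, hence not an equivalence of (discrete) groupoids; correspondingly your claim that $\widetilde{\Lambda^n_k}(T)\hookrightarrow\Hom_{\topspace}(T,|\Delta^n|)$ is an objectwise weak equivalence is false (take $T=|\Delta^n|$ and the identity map). The generating trivial cofibrations of the Reedy structure are instead built from $\partial\Delta^n\to\Delta^n$ tensored against generating trivial cofibrations of $\gpd$, and the preservation statement, while plausibly true, needs a genuine Reedy-cofibrancy argument for the cosimplicial sets $\Hom_{\topspace}(T,|\Delta^\bullet|)$ that you have not supplied.

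The paper sidesteps all of this by proving only what is needed. It verifies the Reedy condition level by level: after the tilde adjunction, the matching map becomes
\[
\Hom_{\presheafgpd}(|\Delta^n|,\stack{X})\to
\Hom_{\presheafgpd}(\widetilde{\partial\Delta^n},\stack{X})
\times_{\Hom_{\presheafgpd}(\widetilde{\partial\Delta^n},\stack{Y})}
\Hom_{\presheafgpd}(|\Delta^n|,\stack{Y}),
\]
and this is a fibration of groupoids because $\widetilde{\partial\Delta^n}\to|\Delta^n|$ is a plain (not trivial!) injective cofibration --- immediate since $(\,)^{\sim}$ preserves colimits and the map is objectwise injective on objects --- combined with the $\gpd$-enrichment of the injective model structure (\cref{P:enriched}, i.e.\ the pullback-power axiom). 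No preservation of \emph{trivial} cofibrations by $(\,)^{\sim}$ is ever required. You should either adopt that argument or supply a genuine proof of the preservation statement; as written, your proposal does not close.
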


\begin{proof}
      We have to show that, for every $n$, the map
      	\begin{align*}
		\Hom_{\sgpd}(\Delta^n,\stack{X}\Del) \to
		 \Hom_{\sgpd}(\partial\Delta^n,\stack{X}\Del)
		 \times_{\Hom_{\sgpd}(\partial\Delta^n,\stack{Y}\Del)}
		  \Hom_{\sgpd}(\Delta^n,\stack{Y}\Del)
	\end{align*}
 is a fibration of groupoids.   Via the tilde construction,
 the above map is isomorphic to
           	\begin{align*}
		\Hom_{\presheafgpd}(\widetilde{\Delta^n},\stack{X}) \to
		 \Hom_{\presheafgpd}(\widetilde{\partial\Delta^n},\stack{X})
		 \times_{\Hom_{\presheafgpd}(\widetilde{\partial\Delta^n},\stack{Y})}
		  \Hom_{\presheafgpd}(\widetilde{\Delta^n},\stack{Y}).
	\end{align*}
This map is a fibration of groupoids because $p:\stack{X} \to \stack{Y}$ is a fibration 
and $\widetilde{\partial\Delta^n} \to \widetilde{\Delta^n}=\Delta^n$ is a cofibration 	 
in the injective model structure on $\presheafgpd$ (to see the latter, write 	 
$\partial\Delta^n$ as the colimit of its faces 	 and use the fact that the tilde 
construction preserves colimits). The claim now follows from
\cref{P:enriched} (also see \cite{HTT}, Remark A.3.1.6(2')).
\end{proof}

\begin{prop}
\label{P:reedy cond2}
  Let $p: \stack{X} \to \stack{Y}$ be a Reedy fibration of presheaves of groupoids,  and
  let $A \to B$ be a monomorphism of simplicial sets. Then, the map
	\begin{align*}
		\Hom_{\sgpd}(B,\stack{X}\Del) \to
		 \Hom_{\sgpd}(A,\stack{X}\Del)
		 \times_{\Hom_{\sgpd}(A,\stack{Y}\Del)}
		  \Hom_{\sgpd}(B,\stack{Y}\Del)
	\end{align*}
 and, equivalently (see \cref{L:restriction isom}), the map
	 \begin{align*}
		\Hom_{\presheafgpd}(\tilde{B},\stack{X}) \to
		 \Hom_{\presheafgpd}(\tilde{A},\stack{X})
		 \times_{\Hom_{\presheafgpd}(\tilde{A},\stack{Y})}
		  \Hom_{\presheafgpd}(\tilde{B},\stack{Y})
     \end{align*}
	are fibrations of groupoids.
\end{prop}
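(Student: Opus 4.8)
The plan is to reduce the statement to a lifting-property reformulation and then apply the definition of a Reedy fibration in $\sgpd$ together with the fact that the Reedy and injective model structures on $\sgpd$ coincide (\cref{P:reedy=inj}). By \cref{L:restriction isom} the two displayed maps are isomorphic as maps of groupoids, so it suffices to treat the one in $\sgpd$, namely
\[
  \Hom_{\sgpd}(B,\stack{X}\Del) \to
  \Hom_{\sgpd}(A,\stack{X}\Del)
  \times_{\Hom_{\sgpd}(A,\stack{Y}\Del)}
  \Hom_{\sgpd}(B,\stack{Y}\Del).
\]
First I would observe that $p\Del:\stack{X}\Del\to\stack{Y}\Del$ is a Reedy fibration of simplicial groupoids by hypothesis, and therefore an injective fibration by \cref{P:reedy=inj}; in particular the injective model structure on $\sgpd$ is $\gpd$-enriched (\cref{P:enriched}, via \cite{HTT}, Remark A.3.3.4).

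Next I would note that a monomorphism $A\to B$ of simplicial sets, regarded as a map in $\sgpd$, is an (injective, equivalently Reedy) cofibration: on each level $A_n\to B_n$ is an injection of sets, hence a cofibration of groupoids in the sense of \cref{SS:modelgpd}, and by \cref{P:reedy=inj} objectwise cofibrations are exactly the injective cofibrations in $\sgpd$. Then the enriched model-category structure gives, for a cofibration $i:A\to B$ and a fibration $p\Del$, that the pushout-product / pullback-hom map
\[
  \Hom_{\sgpd}(B,\stack{X}\Del) \to
  \Hom_{\sgpd}(A,\stack{X}\Del)
  \times_{\Hom_{\sgpd}(A,\stack{Y}\Del)}
  \Hom_{\sgpd}(B,\stack{Y}\Del)
\]
is a fibration of groupoids. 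This is precisely the SM7-type axiom packaged in the notion of $\gpd$-enrichment, applied with the cofibration $\varnothing\to\varnothing$ in $\gpd$ trivially available; concretely it is \cite{HTT}, Remark A.3.1.6(2'), the same reference already invoked in the proof of \cref{P:inj=>reedy}. Translating back through \cref{L:restriction isom} identifies this with the $\presheafgpd$-statement.

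The only real point requiring care — the step I expect to be the main obstacle in writing it cleanly — is making sure the enriched lifting axiom is being quoted in the correct form: we need the pullback-hom of a $\gpd$-enriched model category to be a $\gpd$-fibration when fed a cofibration in $\sset$ (viewed in $\sgpd$) against a fibration in $\sgpd$, and this requires knowing that such a simplicial-set monomorphism really is a cofibration in the injective model structure on $\sgpd$, not merely a levelwise injection of some weaker kind. This is handled by \cref{P:reedy=inj} as indicated above. An entirely parallel route, which I would mention as an alternative, is to argue directly from the Reedy description: write $B$ as built from $A$ by attaching nondegenerate simplices, and at each stage one attaches along a pushout of $\partial\Delta^n\hookrightarrow\Delta^n$, where the relevant map is a fibration of groupoids by the very definition of Reedy fibration of $p\Del$; a transfinite composition of pullbacks of fibrations of groupoids is again a fibration of groupoids, giving the result. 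Either way the argument is short once the enrichment or Reedy bookkeeping is in place, and no new geometric input about Serre stacks is needed here.
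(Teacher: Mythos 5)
Your argument is correct and follows essentially the same route as the paper: reduce to the $\sgpd$ statement via \cref{L:restriction isom}, identify Reedy fibrations with injective fibrations via \cref{P:reedy=inj}, and invoke the $\gpd$-enrichment of the injective model structure (\cref{P:enriched}, i.e.\ \cite{HTT}, Remark A.3.1.6(2')) applied to the cofibration $A\to B$ against the fibration $p\Del$. The paper's stated alternative is (\cite{Dugger01}, Lemma 4.5) rather than your cell-attachment argument, but the main line of reasoning is identical.
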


\begin{proof}
In fact, the first map is a fibration of groupoids for any Reedy fibration 	
$X \to Y$ in $\sgpd$ (in our case $X=\stack{X}\Del$ and $Y=\stack{Y}\Del$). 
In view 	of \cref{P:reedy=inj} this follows from \cref{P:enriched} with
$\category{C}=\Delta$ (also see \cite{HTT}, Remark A.3.1.6(2')).
	
Alternatively, use (\cite{Dugger01}, Lemma 4.5), with $M=\gpd$, $K=A$, $L=B$,
$X=\stack{X}\Del$ and $Y=\stack{Y}\Del$.
\end{proof}

\begin{prop}
\label{P:reedy_replace}
	For any morphism of presheaves of groupoids $p:\stack{X} \to \stack{Y}$,
	there exists a strictly commutative diagram
	\[
	\begin{tikzpicture}[scale=1]

	\node (a) at (0,1.5) {$\stack{X}$};
	\node (b) at (0,0) {$\stack{X}'$};
	\node (c) at (2,1.5) {$\stack{Y}$};
	
	\draw[->] (a) to node[left] {$\sim$} node[right] {$g$} (b);
	\draw[->] (a) to node[above] {$p$} (c);
	\draw[->] (b) to node[below right] {$p'$} (c);

\end{tikzpicture}
\]
where $p'$ is an injective (hence, also Reedy) fibration and $g: \stack{X} \risom\stack{X}'$ 
is an equivalence of presheaves of groupoids.
\end{prop}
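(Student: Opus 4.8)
The plan is to invoke the injective model structure on $\presheafgpd$ introduced in \cref{SS:injective} and apply its factorization axiom. Concretely, I would factor $p : \stack{X} \to \stack{Y}$ as
\[
\stack{X} \xrightarrow{\,g\,} \stack{X}' \xrightarrow{\,p'\,} \stack{Y},
\]
with $g$ a trivial cofibration and $p'$ a fibration in the injective model structure. Since this is literally a factorization of $p$ in the underlying $1$-category of $\presheafgpd$, the triangle commutes strictly, which is one of the required assertions. By construction (\cref{SS:injective}), the weak equivalences of this model structure are the objectwise equivalences of groupoids, which is exactly what ``equivalence of presheaves of groupoids'' means in this paper; hence $g$, being a trivial cofibration, is an equivalence $\stack{X} \risom \stack{X}'$. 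Finally, $p'$ is an injective fibration, so \cref{P:inj=>reedy} shows it is also a Reedy fibration, which finishes the argument.

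There is essentially no obstacle here; the substantive work was already done in setting up the injective model structure and in \cref{P:inj=>reedy}. I would emphasize only that the argument imposes no fibrancy hypothesis on $\stack{X}$ or $\stack{Y}$, so it applies to an arbitrary $p$, and that the replacement it produces depends on the chosen factorization — the statement claims only existence, so functoriality is not needed (though the factorization can be taken functorial since the injective model structure is combinatorial).

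If an explicit construction is wanted, the natural candidate is the mapping cocylinder $\stack{X}\times_{\stack{Y}}\stack{Y}^{I}$, where $I$ is the interval groupoid (two objects with a unique isomorphism between them), the fiber product being formed along an evaluation map $\stack{Y}^{I}\to\stack{Y}$; then $g$ inserts the constant paths and $p'$ is the other evaluation. Here $g$ is an equivalence because evaluation at an endpoint is a trivial fibration and $\gpd$, hence $\presheafgpd$ objectwise, is right proper (every groupoid being fibrant). The delicate point on this route is that the natural verification only makes $p'$ an \emph{objectwise} fibration of groupoids; promoting this to an injective (or Reedy) fibration of presheaves would need $\stack{Y}$ to be injectively fibrant, so that $\stack{Y}^{I}\to\stack{Y}\times\stack{Y}$ is an injective fibration by the enrichment of \cref{P:enriched} — which is not automatic for an arbitrary $\stack{Y}$. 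For this reason I would present the model-categorical factorization as the proof and relegate the cocylinder description to a remark.
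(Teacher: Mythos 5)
Your proof is correct and is essentially the paper's own argument: factor $p$ as a trivial cofibration followed by a fibration in the injective model structure on $\presheafgpd$ and apply \cref{P:inj=>reedy}. The extra remarks on the mapping cocylinder are a sensible aside but not needed.
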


\begin{proof}
	Take the usual fibrant replacement in the injective model structure on
	$\presheafgpd$ and use \cref{P:inj=>reedy}.
\end{proof}

\section{Singular functor for stacks}
\label{S:Sing}
We shall define the  functor $\sing:\presheafgpd \to \sset$ of singular chains
and establish some of its basic properties. This functor will be the focus of
the rest of the paper.

\subsection{The functors $B$ and $\sing$}

\begin{defn}
\label{D:sing}
	Let
\begin{align*}
	B:\presheafgpd &\to \bsset,\\
	\stack{X} &\mapsto N(\stack{X} \Del ),
\end{align*}
\begin{align*}
	\sing:\presheafgpd &\to \sset,\\
	\stack{X} &\mapsto \Diag(N(\stack{X} \Del)).
\end{align*}
Here, $\bsset$ stands for the category of bisimplicial sets,
$N:\sgpd \to \bsset$ is the levelwise 
nerve functor, and
$\Diag:\bsset \to \sset$ refers to taking the diagonal of a bisimplicial set.
\end{defn}

\begin{rem}
\label{R:sing restricts}
When restricted to $\topspace$, the functor $\sing$ coincides with the usual singular 
chains functor $\sing : \topspace \to \sset$. More precisely, the following diagram 
commutes:
	\[
	\begin{tikzpicture}[scale=1]

	\node (a) at (0,1.5) {$\topspace$};
	\node (b) at (2,0) {$\sset$};
	\node (c) at (2,1.5) {$\presheafgpd$};
	
	\draw[->] (a) to node[below left] {$\sing$} (b);
	\draw[->] (a) to node[above] {} (c);
	\draw[->] (c) to node[right] {$\sing$} (b);

\end{tikzpicture}
\]
The top arrow in this diagram is the Yoneda embedding.
\end{rem}

\begin{lemma}
\label{L:levelequiv=>we}
Let $f : X \to Y$ be a morphism of simplicial groupoids that induces  equivalences of 
groupoids $X_n \to Y_n$ for all $n$. Then, the induced map $\Diag(NX) \to \Diag(NY)$ is
a weak equivalence of simplicial sets.
\end{lemma}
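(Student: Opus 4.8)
The plan is to reduce the statement to a known fact about bisimplicial sets: if a map of bisimplicial sets is a levelwise weak equivalence (i.e. $Z_{\bullet,n} \to W_{\bullet,n}$ is a weak equivalence of simplicial sets for every fixed $n$), then the induced map on diagonals is a weak equivalence. This is a standard result -- it follows, for instance, from the fact that $\Diag$ is weakly equivalent to the homotopy colimit over $\Deltacat\op$, or from the Bousfield--Friedlander theorem, or more directly from the realization lemma for bisimplicial sets. So the real content is just to check the hypothesis: that $N(f) : N(X) \to N(Y)$ is a levelwise weak equivalence of bisimplicial sets.

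First I would fix $n$ and consider the map of simplicial sets $N(X_n) \to N(Y_n)$ obtained by applying the nerve to the functor $X_n \to Y_n$. By hypothesis this functor is an equivalence of groupoids, and the nerve functor $N : \gpd \to \sset$ sends equivalences of groupoids to weak equivalences of simplicial sets (indeed to simplicial homotopy equivalences, since an equivalence of groupoids admits a quasi-inverse and the natural isomorphisms give the homotopies). Hence $N(X_n) \to N(Y_n)$ is a weak equivalence for every $n$. Unwinding the definition of the levelwise nerve $N : \sgpd \to \bsset$, the bisimplicial set $N(X)$ has $N(X)_{m,n} = N(X_n)_m$, so in the simplicial direction indexed by $m$ we have exactly $N(X)_{\bullet,n} = N(X_n)$, and thus $N(f)$ restricted to each $n$-th ``column'' is precisely $N(X_n) \to N(Y_n)$, a weak equivalence. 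This establishes that $N(f)$ is a levelwise (in fact columnwise) weak equivalence of bisimplicial sets.

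Next I would invoke the realization lemma: for a map $\phi : Z \to W$ of bisimplicial sets such that $\phi_{\bullet,n} : Z_{\bullet,n} \to W_{\bullet,n}$ is a weak equivalence for every $n$, the diagonal $\Diag(\phi) : \Diag(Z) \to \Diag(W)$ is a weak equivalence. Applying this to $\phi = N(f)$ gives exactly that $\Diag(N(X)) \to \Diag(N(Y))$ is a weak equivalence of simplicial sets, which is the claim. (One should note which direction of levelwise equivalence one uses -- here the equivalences are in the direction $m$ for each fixed $n$ -- and cite the appropriate form of the realization/Bousfield--Friedlander statement; since $\Diag$ is symmetric in the two variables up to the flip isomorphism, either convention works.)

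I do not expect any serious obstacle here; the proof is essentially an assembly of two well-known facts (nerve takes equivalences of groupoids to weak equivalences, and levelwise equivalences of bisimplicial sets induce equivalences on diagonals), together with an unwinding of the definition of the levelwise nerve functor to see that $N$ turns the hypothesis into a columnwise weak equivalence. The only point requiring a little care is keeping straight which simplicial direction is which in the bisimplicial set $N(\stack{X}\Del)$ -- the ``nerve'' direction versus the ``$\Deltacat$'' direction coming from $\stack{X}\Del \in \sgpd$ -- so that the realization lemma is applied in the correct variable.
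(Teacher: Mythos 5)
Your proposal is correct and is essentially the paper's argument: the paper simply cites the realization lemma for bisimplicial sets (Goerss--Jardine, Chapter IV, Proposition 1.7), which is exactly the ``levelwise weak equivalence implies diagonal weak equivalence'' statement you invoke, and the verification that the nerve turns the levelwise equivalences of groupoids into columnwise weak equivalences is the routine step you spell out. No issues.
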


\begin{proof}
	This follows from (\cite{Jardine}, Chapter IV, Proposition 1.7).
\end{proof}

\begin{cor}
\label{C:equiv=>we}
Let $f : \stack{X} \to \stack{Y}$ be an equivalence of presheaves of groupoids. Then, 
$\sing(f) :\sing(\stack{X}) \to \sing(\stack{Y})$ is a weak equivalence of simplicial sets.
\end{cor}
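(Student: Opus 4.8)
The statement to prove is Corollary~\ref{C:equiv=>we}: an equivalence $f:\stack{X}\to\stack{Y}$ of presheaves of groupoids induces a weak equivalence $\sing(f):\sing(\stack{X})\to\sing(\stack{Y})$.

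\medskip

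The plan is to reduce this directly to \cref{L:levelequiv=>we}, so the only real work is to check that the hypothesis of that lemma is satisfied. First I would unwind the definition of $\sing$: by \cref{D:sing}, $\sing(\stack{X})=\Diag(N(\stack{X}\Del))$, so $\sing(f)$ is the diagonal of the levelwise nerve of the morphism of simplicial groupoids $f\Del:\stack{X}\Del\to\stack{Y}\Del$. By \cref{L:levelequiv=>we}, it therefore suffices to show that $f\Del$ induces an equivalence of groupoids $(\stack{X}\Del)_n\to(\stack{Y}\Del)_n$ for every $n\geq 0$. Now $(\stack{X}\Del)_n=\Hom_{\presheafgpd}(|\Delta^n|,\stack{X})$, and via \cref{L:restriction isom} (or directly from the definition of $\stack{X}\Del$) this is just the groupoid of $|\Delta^n|$-points $\stack{X}(|\Delta^n|)$. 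The map induced by $f$ on these is exactly $f(|\Delta^n|):\stack{X}(|\Delta^n|)\to\stack{Y}(|\Delta^n|)$, which is an equivalence of groupoids because $f$ is an equivalence of presheaves of groupoids — that is precisely what ``equivalence'' means (the induced map on $T$-points is an equivalence of groupoids for every $T\in\topspace$), applied to $T=|\Delta^n|$.

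\medskip

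So the proof is essentially three lines: (i) $\sing(f)=\Diag(N(f\Del))$ by definition; (ii) $f\Del$ is a levelwise equivalence of simplicial groupoids since $(f\Del)_n=f(|\Delta^n|)$ is an equivalence of groupoids for each $n$, using that $f$ is an objectwise equivalence and \cref{L:restriction isom}; (iii) conclude by \cref{L:levelequiv=>we}. There is no serious obstacle here — the only point requiring a moment's care is the identification $(\stack{X}\Del)_n\cong\stack{X}(|\Delta^n|)$ and the compatibility of this identification with $f$, but that is immediate from naturality of the Yoneda-type isomorphism in \cref{L:restriction isom} (or simply from the explicit formula $\stack{X}\Del=\Hom_{\presheafgpd}(|\Delta^\bullet|,\stack{X})$). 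The hard part, such as it is, has already been absorbed into \cref{L:levelequiv=>we}, whose proof is outsourced to Jardine; given that lemma, the corollary is a formal consequence of the definitions.
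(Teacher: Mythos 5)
Your proof is correct and is exactly the argument the paper intends: the corollary is stated as an immediate consequence of \cref{L:levelequiv=>we}, obtained by noting that $(\stack{X}\Del)_n\cong\stack{X}(|\Delta^n|)$ and that an equivalence of presheaves of groupoids is by definition an objectwise equivalence, applied at $T=|\Delta^n|$. No gaps.
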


We will need the following definition  from (\cite{Jardine}, Chapter IV, Section 3.3).

\begin{defn}
\label{D:diag_adjoint}
Define the functor
\[
	d^*:\sset \to \bsset
\]
to be the one  uniquely determined by the following two properties:
\begin{itemize}
	\item $d^*(\Delta^n)=\Delta^{n,n}$ (see \cref{Setup} for notation);
	\item $d^*$ preserves colimits.
\end{itemize}
\end{defn}

\begin{prop}
\label{P:Adjunction}
	The functors $\Diag:\bsset \to \sset$ and $N:\sgpd \to \bsset$ have left adjoints:
\[
\begin{tikzpicture}
	
	\node[inner sep=4pt] (a) at (0,0) {$\sgpd$};
	\node (b) at (2,0) {$\bsset$};
	\node (c) at (4,0) {$\sset$.};
	
	\draw[->,transform canvas={yshift=0.7ex}] (a) to node[above] {$N$} (b);
	\draw[->,transform canvas={yshift=0.7ex}] (b) to node[above] {$\Diag$} (c);
	
	\draw[->,transform canvas={yshift=-0.7ex}] (b) to node[below] {$\Pi_1$} (a);
	\draw[->,transform canvas={yshift=-0.7ex}] (c) to node[below] {$d^*$} (b);
		
\end{tikzpicture}
\]
Here, $\Pi_1$ denotes the fundamental groupoid functor, and $d^*$ is as 
in (\cref{D:diag_adjoint}). Therefore, $\Diag\circ N$ also has $\Pi_1 \circ d^*$ 
as left adjoint. In particular, the functors $N$, $\Diag$ and 
$\sing=\Diag\circ N\circ ()\Del$ 
preserve limits.
\end{prop}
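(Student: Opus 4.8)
\emph{Overview.} The plan is to observe that everything in sight is a restriction, nerve, or diagonal functor between presheaf categories, so all three adjunctions are completely formal; the only real work is bookkeeping. I will treat the two left adjoints separately and then compose them.

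\emph{The adjunction $d^*\dashv\Diag$.} Write $\sset=[\Deltacat^{\mathrm{op}},\set]$ and $\bsset=[(\Deltacat\times\Deltacat)^{\mathrm{op}},\set]$. The diagonal functor $d:\Deltacat\to\Deltacat\times\Deltacat$, $[n]\mapsto([n],[n])$, induces by precomposition a restriction functor $\bsset\to\sset$, and comparing definitions one sees this is exactly $\Diag$ (indeed $(\Diag Z)_n=Z_{n,n}$). Restriction along any functor between (small) categories, when the presheaves take values in a cocomplete category such as $\set$, has a left adjoint $d_!$ given by left Kan extension. Being a left adjoint, $d_!$ preserves all colimits, and by the coend/co-Yoneda formula $d_!(\Delta^n)=\Hom_{\Deltacat\times\Deltacat}(-,d[n])=\Delta^{n,n}$. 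Since $\sset$ is the free cocompletion of $\Deltacat$, a colimit-preserving functor out of $\sset$ is determined by its restriction to representables, so $d_!$ coincides with the functor $d^*$ of \cref{D:diag_adjoint} (this is the functor of \cite{Jardine}, Ch.\ IV, \S3.3). Hence $d^*\dashv\Diag$.

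\emph{The adjunction $\Pi_1\dashv N$.} Identify $\bsset=[\Deltacat^{\mathrm{op}},\sset]$ and recall $\sgpd=[\Deltacat^{\mathrm{op}},\gpd]$; under these identifications the levelwise nerve $N:\sgpd\to\bsset$ is precisely postcomposition with the nerve $N:\gpd\to\sset$. The latter is right adjoint to the fundamental groupoid functor $\Pi_1$ (as already used in \cref{P:reedy=inj} and \cref{P:excellent}), and postcomposition with a right adjoint is right adjoint to postcomposition with its left adjoint. Therefore the levelwise $\Pi_1:\bsset\to\sgpd$ is left adjoint to $N$. Composing the two adjunctions and using that a composite of right adjoints is right adjoint to the composite of the corresponding left adjoints in the opposite order, we get $\Pi_1\circ d^*\dashv\Diag\circ N$.

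\emph{Preservation of limits.} As right adjoints, $N$, $\Diag$, and $\Diag\circ N$ preserve all limits. Finally, $()\Del:\presheafgpd\to\sgpd$ is restriction along $\Deltacat\to\topspace$, $[n]\mapsto|\Delta^n|$ (via \cref{L:Yoneda}), hence is itself a right adjoint to the tilde construction; alternatively, limits in $\presheafgpd$ and in $\sgpd$ are computed objectwise, so $()\Del$ manifestly preserves them. Consequently $\sing=\Diag\circ N\circ()\Del$ is a composite of limit-preserving functors and preserves limits.

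\emph{Main obstacle.} There is essentially no mathematical difficulty: the statement is purely formal. The only point demanding care is the identifications — writing $\Diag$ as restriction along the diagonal, the levelwise nerve as postcomposition, and $()\Del$ as restriction along $\Deltacat\to\topspace$ — together with checking that the left adjoint of $\Diag$ produced by Kan extension really is the functor $d^*$ of \cite{Jardine}; once those are pinned down, the adjunctions compose automatically.
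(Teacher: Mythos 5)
Your proof is correct and amounts to the same formal argument the paper invokes: the paper simply cites (\cite{Hollander08}, Corollary 2.3) for $\Pi_1\dashv N$ and (\cite{Jardine}, Chapter IV, Section 3.3) for $d^*\dashv\Diag$, whereas you unwind those references by identifying $\Diag$ as restriction along the diagonal $\Deltacat\to\Deltacat\times\Deltacat$ and the levelwise nerve as postcomposition with $N:\gpd\to\sset$. All the identifications (in particular that the left Kan extension $d_!$ agrees with the $d^*$ of \cref{D:diag_adjoint} by cocontinuity and its values on representables, and that $()\Del$ preserves limits as a right adjoint or because limits are objectwise) are accurate.
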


\begin{proof}
For the first adjunction see (\cite{Hollander08}, Corollary 2.3).
The second adjunction is  discussed in (\cite{Jardine}, Chapter IV, Section 3.3).
\end{proof}

\begin{lemma}
\label{L:respectshomotopy} Let $f,g : \stack{X} \to \stack{Y}$
be morphisms of presheaves of groupoids.
\begin{itemize}
\item[(i)]
If $\alpha : f \Rightarrow g$ is a 2-isomorphism, then
we have an induced homotopy $\hat{\alpha}$
from $\sing(f)$ to $\sing(g)$.
\item[(ii)] If $h$ is a strict homotopy from $f$ to $g$
(see  \cref{D:stack_homotopy}), then we have an induced homotopy $\hat{h}$
from $\sing(f)$ to $\sing(g)$.
\end{itemize}
\end{lemma}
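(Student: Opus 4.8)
The plan is to deduce both statements from the fact that $\sing = \Diag \circ N \circ (-)\Del$ is built out of functors that each carry the relevant data to homotopies of simplicial sets. First I would treat (i). A $2$-isomorphism $\alpha : f \Rightarrow g$ between $f, g : \stack{X} \to \stack{Y}$ restricts, via the functor $(-)\Del : \presheafgpd \to \sgpd$, to a $2$-isomorphism (i.e.\ a natural isomorphism levelwise) $\alpha\Del : f\Del \Rightarrow g\Del$ between the morphisms $f\Del, g\Del : \stack{X}\Del \to \stack{Y}\Del$ of simplicial groupoids. A natural isomorphism between two functors $G \to H$ of groupoids is precisely a functor $G \times I \to H$ restricting to the two given functors on the ends, where $I$ is the free-living isomorphism groupoid $\{0 \xrightarrow{\sim} 1\}$; applying this levelwise, $\alpha\Del$ corresponds to a map $\stack{X}\Del \times (I)_{\bullet} \to \stack{Y}\Del$ in $\sgpd$, where $(I)_{\bullet}$ denotes $I$ viewed as a constant simplicial groupoid. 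Applying the levelwise nerve $N$ gives a map $N(\stack{X}\Del) \times N(I)_{\bullet} \to N(\stack{Y}\Del)$ of bisimplicial sets, and then $\Diag$ gives a map $\Diag N(\stack{X}\Del) \times \Diag\big(N(I)_{\bullet}\big) \to \Diag N(\stack{Y}\Del) = \sing(\stack{Y})$, using that $\Diag$ is a right adjoint hence preserves products (\cref{P:Adjunction}); here $\Diag$ of the constant bisimplicial set on $N(I)$ is just $N(I)$. Since $N(I)$ is the nerve of the contractible groupoid $I$, there is a canonical map $\Delta^1 \to N(I)$ hitting the non-degenerate edge $0 \to 1$, and precomposing with $\sing(\stack{X}) \times \Delta^1 \to \sing(\stack{X}) \times N(I)$ produces the desired homotopy $\hat{\alpha} : \sing(\stack{X}) \times \Delta^1 \to \sing(\stack{Y})$ from $\sing(f)$ to $\sing(g)$.

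For (ii), a strict homotopy $h$ from $f$ to $g$ is by \cref{D:stack_homotopy} a morphism $H : \stack{X} \times [0,1] \to \stack{Y}$ of presheaves of groupoids with $H_0 = f$ and $H_1 = g$ (the $2$-isomorphisms $\epsilon_0, \epsilon_1$ being identities), where $[0,1] = |\Delta^1|$. Applying $(-)\Del$ we get $H\Del : (\stack{X} \times |\Delta^1|)\Del \to \stack{Y}\Del$ in $\sgpd$. The key point is to produce a map $\Delta^1 \to (|\Delta^1|)\Del = \sing(|\Delta^1|)$ — namely the unit $\Delta^1 \to \sing|\Delta^1|$ of the $(|-|, \sing)$ adjunction, equivalently $\Delta^1 \to \widetilde{\Delta^1}\Del = (|\Delta^1|)\Del$ via \cref{L:fakification0} (recall $\psi_{\Delta^1}$ is an isomorphism) — and to use that $(-)\Del$, being a right adjoint (it is $\Hom_{\presheafgpd}(|\Delta^\bullet|, -)$), preserves products, so $(\stack{X} \times |\Delta^1|)\Del \cong \stack{X}\Del \times (|\Delta^1|)\Del$. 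Composing, we obtain $\stack{X}\Del \times \Delta^1 \to \stack{X}\Del \times (|\Delta^1|)\Del \to \stack{Y}\Del$ in $\sgpd$, restricting to $f\Del$ and $g\Del$ on the two ends. Then apply $N$ and $\Diag$ as before, using $\Diag(A \times \mathrm{const}_{\Delta^1}) \cong \Diag(A) \times \Delta^1$ — more precisely $N$ sends the simplicial-groupoid product to the external-then-restricted product of bisimplicial sets and $\Diag$ of that external product against a discrete factor is the ordinary product — to get $\hat{h} : \sing(\stack{X}) \times \Delta^1 \to \sing(\stack{Y})$ from $\sing(f)$ to $\sing(g)$.

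The steps I would carry out in order are: (1) record that $(-)\Del : \presheafgpd \to \sgpd$ preserves finite products (it is a right adjoint by the discussion in \cref{S:Tilde}); (2) record that $N : \sgpd \to \bsset$ carries $\stack{X}\Del \times C$, for $C$ a constant simplicial groupoid, to $N(\stack{X}\Del) \times (\text{constant on } NC)$, and that $\Diag$ of such a product is $\Diag N(\stack{X}\Del) \times NC$ (using $\Diag$ preserves products and is trivial on constant-in-one-direction bisimplicial sets); (3) for (i), identify a $2$-isomorphism with a functor out of $\,\cdot \times I$ and supply the edge $\Delta^1 \to NI$; (4) for (ii), supply the edge $\Delta^1 \to (|\Delta^1|)\Del$ from the adjunction unit and use \cref{L:fakification0}; (5) assemble.

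The main obstacle I anticipate is purely bookkeeping rather than conceptual: making precise the interaction of $N$ and $\Diag$ with products when one factor is a constant (or discrete) simplicial object, so that the cylinder $\times \Delta^1$ on the simplicial-set side genuinely matches the cylinder $\times I$ (resp.\ $\times |\Delta^1|$) on the stack side. Once the identity $\Diag\big(N(\stack{X}\Del \times C)\big) \cong \sing(\stack{X}) \times NC$ is nailed down, both parts follow formally — (i) from $NI$ being contractible with a chosen non-degenerate edge, and (ii) from the adjunction unit $\Delta^1 \to \sing|\Delta^1|$. There is also a minor point in (ii) that the definition of strict homotopy allows $\epsilon_0, \epsilon_1$ to be identities but $\psi$ need not be relevant here since we are not working relative to any base $\stack{Y}$-structure; the construction only uses $H$, $H_0 = f$, $H_1 = g$.
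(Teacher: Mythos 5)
Your proposal is correct, and for part (ii) it is exactly the paper's (one-line) argument spelled out: $\sing$ preserves products, so $\sing(H)$ factors through $\sing(\stack{X})\times\sing(|\Delta^1|)$, and one precomposes with the adjunction unit $\Delta^1\to\sing(|\Delta^1|)$ to get the cylinder. For part (i) you differ from the paper in one small but genuine way: the paper takes the interval object to be the constant presheaf on the ordinal \emph{category} $[1]=\{0\to 1\}$, so that $\sing(\mathcal{I})=\Delta^1$ on the nose and $\hat{\alpha}:=\sing(\Phi_\alpha)$ is already the homotopy with no further choices — at the price of the parenthetical remark that $\sing$ must be extended to presheaves of categories. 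You instead stay inside groupoids by using the free-living isomorphism $I$, whose nerve is contractible but is not $\Delta^1$, and then select the edge $\Delta^1\to N(I)$ classifying $0\to 1$. Both routes are valid: yours avoids enlarging the domain of $\sing$ beyond groupoids, while the paper's avoids the extra edge-selection and the bookkeeping with the infinite-dimensional complex $N(I)$.
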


\begin{proof}
Part ($ii$) follows from the fact that $\sing$ commutes with products (\cref{P:Adjunction}). 
To prove part ($i$), let $\mathcal{I}$ be the constant presheaf of categories 
$\mathcal{I}: T \mapsto \{0\to 1\}$, where $\{0\to 1\}$ is the ordinal category 
(also denoted $[1]$). A 2-isomorphism $\alpha$ as above is the same thing as a morphism
\[\Phi_{\alpha} : \stack{X}\times\mathcal{I} \to \stack{Y}\]
whose restrictions to $\{0\}$ and $\{1\}$ are $f$ and $g$, respectively.  
It is easy to see that $\sing(\mathcal{I})=\Delta^1$.  (Note that we have only 
defined $\sing$ for presheaves of groupoids, but clearly the same definition makes 
sense for presheaves of categories as well.) By \cref{P:Adjunction}, we obtain a 
map of simplicial sets
   \[\hat{\alpha}:=\sing(\Phi_{\alpha}) : \sing(\stack{X})\times\Delta^1 \to \sing(\stack{Y}).
   \]
This is the desired homotopy.
\end{proof}

\begin{rem} $ $
\begin{itemize}
\item[(1)] The operation $\alpha \mapsto \hat{\alpha}$ respects composition of 2-isomorphisms
 in the sense that $\widehat{\alpha\cdot\beta}$ is canonically homotopic to the ``composition''
 of $\hat{\alpha}$ and $\hat{\beta}$. More precisely,
 $\hat{\alpha}$, $\hat{\beta}$ and $\widehat{\alpha\cdot\beta}$
 are the three faces of a canonical map
  \[\sing(\stack{X})\times\Delta^2 \to \sing(\stack{Y}).\]
 We also have higher coherences. That is, every string of $k$ composable 2-isomorphisms
 defines a canonical map   \[\sing(\stack{X})\times\Delta^k \to \sing(\stack{Y})\]
 whose restriction to various faces represent different ways of composing (a subset) of
 homotopies associated to these 2-isomorphisms.

\item[(2)] In the statement of \cref{L:respectshomotopy}($ii$) we could use
 a general homotopy $h=(H,\epsilon_0,\epsilon_1)$ from $f$ to $g$
(see  \cref{D:stack_homotopy}), but in this case instead of a homotopy from $f$ to $g$ we obtain
a sequence of three composable homotopies $\hat{\epsilon_0}$, $\hat{\epsilon_1}$ and $\hat{h}$.
\end{itemize}
\end{rem}

\begin{ex} In \cref{L:2fiberprod} below we will discuss the effect of $\sing$ on
2-fiber products of presheaves of groupoids. To motivate the assumptions made there,
we look at the following examples.
\begin{itemize}
\item[(1)]
 The functor $\sing$ does not respect 2-fiber products. For example,
 let $\stack{Z}$ be the constant presheaf on $\topspace$ with value $J$ (viewed as a stack), 
 where $J=\{0\longleftrightarrow 1\}$ is the interval groupoid, and let 
 $\stack{X}=\stack{Y}=*$ be singletons mapping to the points $0$ and $1$ 
 in $\stack{Z}$, respectively.
 Then,
 \[\stack{X}\ttimes_{\stack{Z}}\stack{Y}=*\ttimes_{\stack{Z}}*\]
 is equivalent to a point, while
  \[\sing(\stack{X})\times_{\sing(\stack{Z})}\sing(\stack{Y}) =
  *\times_{\sing(\stack{Z})}*\]
  is the empty set.
\item[(2)] It is not reasonable to expect that $\sing$ takes 2-fiber products to
homotopy fiber products either. For example, let $\stack{Z}=[0,1]$ be the unit
interval, and let $\stack{X}=\stack{Y}=*$ be singletons
 mapping to  the points $0$ and $1$ in $\stack{Z}$, respectively.
  Then,
 \[\stack{X}\ttimes_{\stack{Z}}\stack{Y}=*\ttimes_{\stack{Z}}*=*\times_{[0,1]}*\]
 is the empty set, while
  \[\sing(\stack{X})\stackrel{h}{\times}_{\sing(\stack{Z})}\sing(\stack{Y}) =
  *\stackrel{h}{\times}_{\sing(\stack{Z})}*\]
  is non-empty (in fact, homotopy equivalent to a point).
\end{itemize}
\end{ex}

\begin{lemma}
\label{L:2fiberprod}
Consider the following diagram in $\presheafgpd$:
\[
\begin{tikzpicture}[scale=1]

	\node (b) at (0,0) {$\stack{Y}$};
	\node (c) at (2,2) {$\stack{X}$};
	\node (d) at (2,0) {$\stack{Z}$};
	
	\draw[->] (b) to node[below] {} (d);
	\draw[->] (c) to node[right] {p} (d);

\end{tikzpicture}
\]
Suppose that $p$ is a Reedy fibration (by \cref{L:reedyforsets2}
this is automatic if $\stack{X}$ and $\stack{Z}$
are presheaves of sets). Then, there is a natural weak equivalence of simplicial
sets
 \[\sing(\stack{X})\times_{\sing(\stack{Z})}\sing(\stack{Y})
 \stackrel{\sim}{\longrightarrow} \sing(\stack{X}\ttimes_{\stack{Z}}\stack{Y}).
 \]
\end{lemma}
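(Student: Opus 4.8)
The strategy is to reduce the statement about presheaves of groupoids to a statement about simplicial groupoids via the restriction functor $(-)_\Deltacat$, and then invoke the standard fact that the diagonal nerve sends levelwise equivalences (and levelwise homotopy pullbacks along Reedy/isofibrations) to weak equivalences. First I would observe that $\sing = \Diag \circ N \circ (-)_\Deltacat$ preserves strict limits by \cref{P:Adjunction}, and that $(-)_\Deltacat$ also preserves strict limits (it is a right adjoint, or simply because strict limits of presheaves of groupoids are computed objectwise and $(-)_\Deltacat$ is an objectwise Hom). Hence
\[
\sing(\stack{X}\times_{\stack{Z}}\stack{Y}) \cong \sing(\stack{X})\times_{\sing(\stack{Z})}\sing(\stack{Y}),
\]
so the content of the lemma is really to compare the strict 2-fiber product $\stack{X}\ttimes_{\stack{Z}}\stack{Y}$ with the strict fiber product $\stack{X}\times_{\stack{Z}}\stack{Y}$ after applying $\sing$, using the Reedy-fibration hypothesis on $p$.

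The key step is the following. Applying $(-)_\Deltacat$ to the diagram, set $X = \stack{X}_\Deltacat$, $Y = \stack{Y}_\Deltacat$, $Z = \stack{Z}_\Deltacat$ in $\sgpd$, with $p_\Deltacat : X \to Z$ a Reedy fibration by hypothesis (\cref{D:reedy_cond}). Since $(-)_\Deltacat$ preserves strict limits, $(\stack{X}\times_{\stack{Z}}\stack{Y})_\Deltacat = X\times_Z Y$ and $(\stack{X}\ttimes_{\stack{Z}}\stack{Y})_\Deltacat = X\ttimes_Z Y$, the latter because the 2-fiber product of presheaves of groupoids is computed objectwise (\cref{SS:strict}) and the same holds levelwise in the simplicial direction. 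In $\sgpd$ with its Reedy $=$ injective model structure (\cref{P:reedy=inj}), a Reedy fibration is in particular an objectwise (i.e.\ levelwise in $\Deltacat$) isofibration of groupoids: indeed evaluating the Reedy condition at the object $\Delta^n$ shows $X_n \to M_nX\times_{M_nY}Y_n$ is an isofibration, and one checks inductively (using that matching objects are built from isofibrations) that $X_n \to Z_n$ is an isofibration for every $n$. Now apply \cref{L:strictvs2-categorical} levelwise: for each $n$ the natural map $X_n\times_{Z_n} Y_n \to X_n\ttimes_{Z_n} Y_n$ is an equivalence of groupoids, since $X_n \to Z_n$ is an isofibration. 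Thus the levelwise map $X\times_Z Y \to X\ttimes_Z Y$ in $\sgpd$ is a levelwise equivalence of groupoids.

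Finally, apply $\Diag\circ N$. By \cref{L:levelequiv=>we}, a morphism of simplicial groupoids that is a levelwise equivalence of groupoids is carried by $\Diag\circ N$ to a weak equivalence of simplicial sets. Since $\sing(\stack{X}\ttimes_{\stack{Z}}\stack{Y}) = \Diag N\big((\stack{X}\ttimes_{\stack{Z}}\stack{Y})_\Deltacat\big) = \Diag N(X\ttimes_Z Y)$ and likewise the strict version equals $\Diag N(X\times_Z Y) \cong \sing(\stack{X})\times_{\sing(\stack{Z})}\sing(\stack{Y})$, we conclude that the natural map
\[
\sing(\stack{X})\times_{\sing(\stack{Z})}\sing(\stack{Y}) \xrightarrow{\ \sim\ } \sing(\stack{X}\ttimes_{\stack{Z}}\stack{Y})
\]
is a weak equivalence. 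Naturality is automatic since every map in the construction is natural in the diagram. The main obstacle I anticipate is the bookkeeping in the middle step — verifying carefully that a Reedy fibration in $\sgpd$ restricts to a levelwise isofibration of groupoids (so that \cref{L:strictvs2-categorical} applies), and that the formation of the 2-fiber product genuinely commutes with $(-)_\Deltacat$ and with passing to each simplicial level; everything else is a direct concatenation of results already established in the excerpt.
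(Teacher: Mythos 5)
Your proposal is correct and follows essentially the same route as the paper: identify $\sing(\stack{X})\times_{\sing(\stack{Z})}\sing(\stack{Y})$ with $\sing(\stack{X}\times_{\stack{Z}}\stack{Y})$ via \cref{P:Adjunction}, observe that the Reedy fibration hypothesis makes $p_{\Deltacat}$ a levelwise isofibration so that \cref{L:strictvs2-categorical} gives a levelwise equivalence $\stack{X}\times_{\stack{Z}}\stack{Y}\to\stack{X}\ttimes_{\stack{Z}}\stack{Y}$ over $\Deltacat$, and conclude with \cref{L:levelequiv=>we}. The only difference is that you spell out the (standard) inductive argument that a Reedy fibration is a levelwise fibration, which the paper asserts parenthetically.
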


\begin{proof}
	Since $p$ is a Reedy fibration (hence objectwise fibration when restricted to
	$\Deltacat$), the
	natural map
	 \[ \stack{X}\times_{\stack{Z}}\stack{Y} \to \stack{X}\ttimes_{\stack{Z}}\stack{Y}
	 \]
 is an objectwise weak equivalence when restricted to $\Deltacat$
 (see Lemma \ref{L:strictvs2-categorical}). It follows
 from \cref{L:levelequiv=>we} that
 the induced map
   	 	 \[ \sing(\stack{X}\times_{\stack{Z}}\stack{Y}) \stackrel{\sim}{\longrightarrow}
   	 	 \sing(\stack{X}\ttimes_{\stack{Z}}\stack{Y})
	 \]
is a weak equivalence of simplicial sets. Precomposing with the isomorphism of
\cref{P:Adjunction}, we obtain the desired weak equivalence
\[ \sing(\stack{X})\times_{\sing(\stack{Z})}\sing(\stack{Y})
    \stackrel{\cong}{\longrightarrow}
\sing(\stack{X}\times_{\stack{Z}}\stack{Y}) \stackrel{\sim}{\longrightarrow}
   	 	 \sing(\stack{X}\ttimes_{\stack{Z}}\stack{Y}).
 \]	
\end{proof}

\section{Lifting lemmas}
\label{S:Lifting}
In this section we prove some lifting lemmas which will be used in the
subsequent sections in the proofs of our main results. We invite the
reader to consult \cref{R:weak} before reading this section to prevent possible
confusion caused by our usage of the term `weak' in what follows.

\subsection{Strictifying lifts}
\label{SS:strictify}

The following lemma is useful when we want to replace a lax solution to  a 
strict lifting problem with a strict solution.

\begin{lemma}
\label{L:strictify}
Consider the following  strictly commutative diagram, where $p$ is a Reedy fibration
of presheaves of groupoids (\cref{D:reedy_cond}) and $i$ is a monomorphism of simplicial sets:

\[
\begin{tikzpicture}[scale=1]

	\node (a) at (0,2) {$\tilde{A}$};
	\node (b) at (0,0) {$\tilde{B}$};
	\node (c) at (2,2) {$\stack{X}$};
	\node (d) at (2,0) {$\stack{Y}$};
	
	\draw[->] (a) to node[left] {$\tilde{i}$} (b);
	\draw[->] (a) to node[above] {$f$} (c);
	\draw[->] (b) to node[below] {$g$} (d);
	\draw[->] (c) to node[right] {$p$} (d);

\end{tikzpicture}
\]	
Suppose that there exists a lift $h$ and 2-isomorphisms $\beta$ and $\gamma$
making the following diagram 2-commutative:		
	\[
\begin{tikzpicture}[scale=1]

	\node (a) at (0,2) {$\tilde{A}$};
	\node (b) at (0,0) {$\tilde{B}$};
	\node (c) at (2,2) {$\stack{X}$};
	\node (d) at (2,0) {$\stack{Y}$};
	
	\draw[->] (a) to node[left] {$\tilde{i}$} (b);
	\draw[->] (a) to node[above] {$f$} (c);
	\draw[->] (b) to node[below] {$g$} (d);
	\draw[->] (c) to node[right] {$p$} (d);
	
	\draw[->] (b) to node[above left,inner sep = 1pt] {$h$} (c);

	\draw[dbl] (1,0) to [bend right=0] node[below right,inner sep=1pt] {$\gamma$} (2,1);
	\draw[dbl] (1,2) to [bend right=0] node[above left,inner sep=1pt] {$\beta$} (0,1);

\end{tikzpicture}
\]
Then, we can replace $h$ by a 2-isomorphic morphism $h'$ so that $\beta$ and $\gamma$ 
become the identity 2-isomorphisms. More precisely, $h'\circ\tilde{i}=f$, $p\circ h'=g$, 
and there is $\theta : h' \Rightarrow h$ such that $\theta\circ\tilde{i}=\beta$ and
$p\circ\theta=\gamma$.
\end{lemma}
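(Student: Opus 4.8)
The plan is to recognise the desired strictification as an instance of the path--lifting (isofibration) property for the fibration of groupoids supplied by \cref{P:reedy cond2}. Throughout I would work directly in $\presheafgpd$; by \cref{L:restriction isom} everything could equally be phrased in $\sgpd$, but that translation is not needed.

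First I would observe that, because the outer square is \emph{strictly} commutative, $p\circ f=g\circ\tilde i$, so the pair $(f,g)$ is a genuine object of the fibre product groupoid
\[
\mathcal{G}:=\Hom_{\presheafgpd}(\tilde A,\stack X)\times_{\Hom_{\presheafgpd}(\tilde A,\stack Y)}\Hom_{\presheafgpd}(\tilde B,\stack Y).
\]
Consider the functor
\[
\Phi:\Hom_{\presheafgpd}(\tilde B,\stack X)\longrightarrow\mathcal{G},\qquad k\longmapsto(k\circ\tilde i,\ p\circ k),
\]
so that $\Phi(h)=(h\circ\tilde i,\ p\circ h)$. Since $p$ is a Reedy fibration and $i:A\to B$ is a monomorphism of simplicial sets, \cref{P:reedy cond2} tells us that $\Phi$ is a fibration of groupoids, i.e.\ an isofibration in the sense of \cref{D:gpd_fib}.

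Next I would assemble $\beta$ and $\gamma$ into a single isomorphism of $\mathcal{G}$. Recall (taking $\alpha=\id$ in \cref{D:WLLP}) that $\beta:f\Rightarrow h\circ\tilde i$ and $\gamma:g\Rightarrow p\circ h$, and that the 2-commutativity hypothesis is precisely the identity $p\circ\beta=\gamma\circ\tilde i$. This identity is exactly the compatibility condition required for the pair $\varphi:=(\beta,\gamma)$ to define a morphism $(f,g)\to\Phi(h)$ in $\mathcal{G}$, and since $\beta,\gamma$ are 2-isomorphisms, $\varphi$ is an isomorphism. Applying the isofibration property of $\Phi$ to the object $h$ and the isomorphism $\varphi:(f,g)\to\Phi(h)$ yields an isomorphism $\theta:h'\to h$ in $\Hom_{\presheafgpd}(\tilde B,\stack X)$ with $\Phi(\theta)=\varphi$; in particular $\Phi(h')$ is the source of $\varphi$, namely $(f,g)$.

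Finally I would read off the conclusion componentwise. The equality $\Phi(h')=(f,g)$ says $h'\circ\tilde i=f$ and $p\circ h'=g$, so $h'$ is a strict solution of the original lifting problem; and $\Phi(\theta)=(\beta,\gamma)$ says $\theta\circ\tilde i=\beta$ and $p\circ\theta=\gamma$, with $\theta:h'\Rightarrow h$ the required 2-isomorphism. I do not expect any genuine obstacle here: the only things needing care are that the compatibility making $\varphi$ a morphism of $\mathcal{G}$ is literally the 2-commutativity of the given diagram, and that the directions of $\beta$, $\gamma$, $\varphi$ and $\theta$ all align so that the isofibration lift comes out as a morphism $h'\Rightarrow h$ satisfying the stated identities — which, as indicated above, they do.
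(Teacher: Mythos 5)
Your proof is correct and is essentially identical to the paper's: both invoke \cref{P:reedy cond2} to see that $k\mapsto(k\circ\tilde i,\,p\circ k)$ is a fibration of groupoids and then lift the isomorphism $(\beta,\gamma):(f,g)\Rightarrow(h\circ\tilde i,\,p\circ h)$ to $\theta:h'\Rightarrow h$. Your write-up just makes explicit the verification (which the paper leaves implicit) that $p\circ\beta=\gamma\circ\tilde i$ is exactly the condition for $(\beta,\gamma)$ to be a morphism of the strict fiber product groupoid.
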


\begin{proof} By \cref{P:reedy cond2}, the natural  map
 \begin{align*}
		\Psi : \Hom_{\presheafgpd}(\tilde{B},\stack{X}) \to
		 \Hom_{\presheafgpd}(\tilde{A},\stack{X})
		 \times_{\Hom_{\presheafgpd}(\tilde{A},\stack{Y})}
		  \Hom_{\presheafgpd}(\tilde{B},\stack{Y})
\end{align*}
is a  fibration of groupoids.
The map $h$ can be regarded as an object on the left hand side, with
$\Psi(h)=(h\circ \tilde{i},p\circ  h)$. Since $\Psi$ is a fibration, we can lift the
2-isomorphism $(\beta,\gamma) : (f,g) \Rightarrow (h\circ \tilde{i},p\circ  h)$ to
a 2-isomorphism $\theta : h'\to h$. This is exactly what we need.  
%
%
%
%
\end{proof}

In most of our applications of the above lemma, we will have $B=\Delta^n$, in which case
$\tilde{B}=|\Delta^n|$.

%

\begin{cor}
\label{C:adjust_htpy}
Let $p :\stack{X} \to \stack{Y}$ be a Reedy fibration of presheaves of groupoids, 
$A$ a simplicial set, and $H : \widetilde{A\times\Delta^1} \to \stack{X}$ a 
restricted homotopy (see \cref{SS:restricted}) relative to $\stack{Y}$ starting at 
$H_0:=H|_{\tilde{A}\times\{0\}}: \tilde{A} \to \stack{X}$. Then, for every 
2-isomorphism $\beta : f' \Rightarrow H_0$, there exists a restricted homotopy 
\[H' : \widetilde{A\times\Delta^1} \to \stack{X} \text{ relative to } \stack{Y}\] 
and 
a 2-isomorphism $\Theta : H' \Rightarrow H$ such that $p\circ \Theta =
p\circ\beta\circ\widetilde{\pr_1}$ as 2-isomorphisms
\[p\circ f' \circ\widetilde{\pr_1} 
\Rightarrow p\circ H_0 \circ\widetilde{\pr_1} \,(=p\circ H)  \] 
(i.e., $\Theta$ is relative to $p\circ\beta\circ\widetilde{\pr_1}$), and that
\[f'=H'_0:=H'|_{\tilde{A}\times\{0\}} \text{ and } 
\beta=\Theta_0:=\Theta|_{\tilde{A}\times\{0\}}.\]
\end{cor}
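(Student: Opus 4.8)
The statement is the "homotopy version" of \cref{L:strictify}: there we adjusted a single lift by a $2$-isomorphism; here we must adjust a whole restricted homotopy $H$ so that its time-$0$ restriction changes from $H_0$ to a prescribed $2$-isomorphic morphism $f'$. The plan is to set this up as a lifting problem in a groupoid of homotopies and solve it using the fibrancy statement of \cref{P:reedy cond2}, exactly as in the proof of \cref{L:strictify}. Concretely, a restricted homotopy relative to $\stack{Y}$ with domain $\widetilde{A\times\Delta^1}$ is, by definition and \cref{R:fake_remark}, the same data as a morphism $A\times\Delta^1 \to \stack{X}\Del$ whose composite with $p\Del$ factors through the projection $A\times\Delta^1 \to A \to \stack{Y}\Del$. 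Restriction along $A\times\{0\}\hookrightarrow A\times\Delta^1$ gives a morphism $A \to \stack{X}\Del$, i.e. a map $\tilde A \to \stack{X}$, and this restriction is compatible with the $p$-factorization. The key point is that the inclusion $A\times\{0\}\hookrightarrow A\times\Delta^1$ is a monomorphism of simplicial sets, so \cref{P:reedy cond2} applies.

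First I would form, using \cref{P:reedy cond2} with the monomorphism $i : A\times\{0\}\hookrightarrow A\times\Delta^1$, the fibration of groupoids
\[
\Psi : \Hom_{\presheafgpd}(\widetilde{A\times\Delta^1},\stack{X}) \to
\Hom_{\presheafgpd}(\tilde A,\stack{X})
\times_{\Hom_{\presheafgpd}(\tilde A,\stack{Y})}
\Hom_{\presheafgpd}(\widetilde{A\times\Delta^1},\stack{Y}).
\]
Next I would identify the restricted homotopy $H$ with an object of the source of $\Psi$; then $\Psi(H)$ is the pair $(H_0, p\circ H)$, where we think of $p\circ H$ as the fixed composite $p\circ H_0\circ\widetilde{\pr_1}$ living in $\Hom_{\presheafgpd}(\widetilde{A\times\Delta^1},\stack{Y})$. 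The given $2$-isomorphism $\beta : f' \Rightarrow H_0$ together with the identity $2$-isomorphism on $p\circ H$ assembles into a $2$-isomorphism $(\beta,\id) : (f', p\circ H) \Rightarrow (H_0, p\circ H)$ in the target groupoid — here one must check that $(\beta,\id)$ is a legitimate morphism in the strict fiber product, i.e. that $p\circ\beta$ agrees with $\id$ under both projections; but the right factor carries the identity and the left factor maps to $p\circ\beta$, and compatibility holds because the factorization data is relative to $\stack{Y}$, so $p\circ f'\circ\widetilde{\pr_1}$ and $p\circ H_0\circ\widetilde{\pr_1}$ are literally equal and $p\circ\beta$ is compatible with this. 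Then, since $\Psi$ is a fibration of groupoids (\cref{D:gpd_fib}), I lift $(\beta,\id)$ to a $2$-isomorphism $\Theta : H' \Rightarrow H$ with $\Psi(\Theta) = (\beta,\id)$. Unwinding, $\Psi(\Theta) = (\Theta_0, p\circ\Theta)$, so $\Theta_0 = \beta$ and $p\circ\Theta = \id_{p\circ H}$ — wait: we want $p\circ\Theta = p\circ\beta\circ\widetilde{\pr_1}$, not the identity; so the correct choice in the target groupoid is the $2$-isomorphism $(\beta, p\circ\beta\circ\widetilde{\pr_1})$ from $(f', p\circ f'\circ\widetilde{\pr_1})$ to $(H_0, p\circ H)$, which is visibly a morphism in the strict fiber product since both components map compatibly to $p\circ\beta\circ\widetilde{\pr_1}$. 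Lifting this along $\Psi$ produces $\Theta : H' \Rightarrow H$ with $\Theta_0 = \beta$ and $p\circ\Theta = p\circ\beta\circ\widetilde{\pr_1}$, which is exactly the assertion; moreover $\Psi(H') = (f', p\circ f'\circ\widetilde{\pr_1})$ shows that $H'_0 = f'$ and that $H'$ is again a restricted homotopy relative to $\stack{Y}$.

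The main obstacle is bookkeeping rather than anything deep: one must carefully identify which groupoid the various homotopies live in, verify that the chosen pair of $2$-isomorphisms genuinely defines a morphism of the strict (not just $2$-categorical) fiber product — this is where the "relative to $\stack{Y}$" hypothesis is used, ensuring the relevant composites with $p$ are strictly equal, not merely isomorphic — and then translate the output of the lift back through \cref{L:restriction isom} and \cref{R:fake_remark} into the language of restricted homotopies (checking the $H_i$ notation of \cref{D:fake homotopy} matches, i.e. that $\Theta_0$ really means $\Theta\circ\tilde i$ for the time-$0$ map $i$). No genuinely new idea beyond \cref{L:strictify} is needed; this corollary is the same argument applied to the monomorphism $A\times\{0\}\hookrightarrow A\times\Delta^1$ instead of $A\hookrightarrow B$.
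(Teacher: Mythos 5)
Your proposal is correct and is essentially the paper's own argument: the paper proves this corollary by invoking \cref{L:strictify} with $B=A\times\Delta^1$, $i$ the time-$0$ inclusion, $f=f'$, $g=p\circ f'\circ\widetilde{\pr_1}$, $h=H$ and $\gamma=p\circ\beta\circ\widetilde{\pr_1}$, which is exactly the lifting problem for the fibration of groupoids from \cref{P:reedy cond2} that you set up and solve directly. The only difference is that you unfold the proof of \cref{L:strictify} inline rather than citing it, and your final choice of $(\beta,\,p\circ\beta\circ\widetilde{\pr_1})$ as the morphism to lift in the strict fiber product matches the paper's choice.
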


\begin{proof}
 With the notation of \cref{L:strictify},
 let $B=A\times\Delta^1$, $i : A \to A\times\Delta^1$ the inclusion at time $0$, $f=f'$,
 $g=p\circ f' \circ\widetilde{\pr_1}$,  
 $h=H$, $\beta=\beta$ and $\gamma=p\circ\beta\circ\widetilde{\pr_1}$, as in the diagram
	\[
\begin{tikzpicture}[scale=1]

	\node (a) at (0,2) {$\tilde{A}$};
	\node (b) at (0,0) {$\widetilde{A\times\Delta^1}$};
	\node (c) at (3,2) {$\stack{X}$};
	\node (d) at (3,0) {$\stack{Y}$};
	
	\draw[->] (a) to node[left] {$\tilde{i}$} (b);
	\draw[->] (a) to node[above] {$f'$} (c);
	\draw[->] (b) to node[below] {$p\circ f' \circ\widetilde{\pr_1}$} (d);
	\draw[->] (c) to node[right] {$p$} (d);
	
	\draw[->] (b) to node[above left,inner sep = 1pt] {$H$} (c);

	\draw[dbl] (2,0) to [bend right=0] node[below right,inner sep=1pt] {$\gamma$} (3,1);
	\draw[dbl] (1,2) to [bend right=0] node[above left,inner sep=1pt] {$\beta$} (0,1);

\end{tikzpicture}
\]
 The result now follows from	  \cref{L:strictify}.
\end{proof}

\subsection{Strict lifts for Serre+Reedy fibrations}
\label{SS:strictliftingSerre}

From now on, we will assume that our simplicial sets $A$ and $B$ are finite  
non-singular simplicial sets  (see \cref{C:fakification2} and the preceding
paragraph).  For example, $\Delta^n$, $\partial\Delta^n$ 
and $\Lambda^n_k$ have this property.  If $A$ and $B$ have this property, 
then $A\times B$ also has this property, and so does any colimit $A\stackvee_CB$, as 
long as the maps $C \to A$ and $C \to B$ are monomorphisms. In particular, if 
$i: A \to B$ is a monomorphism, then the mapping cylinder $\Cyl(i)$ has this 
property.

\begin{lemma}
\label{L:fake_lift}
 Let $p : \stack{X}\to \stack{Y}$  be a morphism of Serre topological stacks and
  $i : A \to B$   a monomorphism of finite non-singular simplicial sets. If $p$ is a (weak)
  Serre fibration and either $p$ or $i$ is a weak equivalence, then
  $\tilde{i}: \tilde{A} \to \tilde{B}$ has (weak) LLP with respect to $p$ (see \cref{D:WLLP}).
\end{lemma}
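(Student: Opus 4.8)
The plan is to move the lifting problem out of the ``tilde'' world into the world of genuine topological spaces via the gluing property (\cref{C:fakification2}), solve it there using the behaviour of (weak) Serre fibrations of stacks against cellular inclusions of finite CW complexes, and then push the solution back.

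So suppose we are given the data of \cref{D:WLLP}: a strictly commutative square with $\tilde{i} : \tilde{A} \to \tilde{B}$ on the left, $f : \tilde{A} \to \stack{X}$ on top, $g : \tilde{B} \to \stack{Y}$ on the bottom and $p$ on the right, together with a $2$-isomorphism $\alpha : p\circ f \Rightarrow g\circ\tilde{i}$. Since $A$ and $B$ are finite non-singular simplicial sets and $\stack{X},\stack{Y}$ are Serre topological stacks, \cref{C:fakification2} says that the four functors $\Sigma_{A,\stack{X}},\Sigma_{B,\stack{X}},\Sigma_{A,\stack{Y}},\Sigma_{B,\stack{Y}}$ are equivalences of groupoids. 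Using essential surjectivity, choose $\bar{f} : |A|\to\stack{X}$ with a $2$-isomorphism $\nu_A : \bar{f}\circ\psi_A \Rightarrow f$, and $\bar{g} : |B|\to\stack{Y}$ with $\nu_B : \bar{g}\circ\psi_B \Rightarrow g$. Since $\psi$ is natural we have $|i|\circ\psi_A = \psi_B\circ\tilde{i}$, so the pasted $2$-isomorphism
\[ (p\circ\nu_A)\cdot\alpha\cdot(\nu_B\circ\tilde{i})^{-1} \colon \ p\circ\bar{f}\circ\psi_A \ \Rightarrow\ \bar{g}\circ|i|\circ\psi_A \]
is a $2$-isomorphism between $\Sigma_{A,\stack{Y}}(p\circ\bar{f})$ and $\Sigma_{A,\stack{Y}}(\bar{g}\circ|i|)$; by full faithfulness of $\Sigma_{A,\stack{Y}}$ it comes from a unique $2$-isomorphism $\bar{\alpha} : p\circ\bar{f} \Rightarrow \bar{g}\circ|i|$. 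This is precisely a lifting problem for $|i| : |A|\to|B|$ against $p$.

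Now $|A|,|B|$ are finite CW complexes and $|i|$ is a cellular inclusion. If $i$ is a weak equivalence, then $|i|$ induces isomorphisms on all $\pi_n$, and since $p$ is a (weak) Serre fibration, \cref{L:SerrevsSerre} gives that $|i|$ has (W)LLP with respect to $p$. If instead $p$ is a weak equivalence, then, because $\stack{X}$ is Serre, \cref{P:trivserre} shows $p$ is a (weak) trivial Serre fibration, so by \cref{D:WTS_fib} the finite CW inclusion $|i|$ again has (W)LLP with respect to $p$. Either way we obtain $\bar{h} : |B|\to\stack{X}$, a $2$-isomorphism $\bar{\gamma} : \bar{g}\Rightarrow p\circ\bar{h}$, and a fiberwise homotopy $\bar{H} : |A|\times[0,1]\to\stack{X}$ from $\bar{f}$ to $\bar{h}\circ|i|$ relative to $\bar{\alpha}\cdot(\bar{\gamma}\circ|i|)$, with $\bar{H}$ a ghost homotopy (a $2$-isomorphism $\bar{\beta}:\bar{f}\Rightarrow\bar{h}\circ|i|$) in the non-weak case.

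Finally, transfer back: set $h:=\bar{h}\circ\psi_B:\tilde{B}\to\stack{X}$, note $h\circ\tilde{i}=\bar{h}\circ|i|\circ\psi_A$ by naturality of $\psi$, put $\gamma:=\nu_B^{-1}\cdot(\bar{\gamma}\circ\psi_B):g\Rightarrow p\circ h$, and let $H:=\bar{H}\circ(\psi_A\times\id_{[0,1]}):\tilde{A}\times[0,1]\to\stack{X}$. The remaining homotopy data for $H$ (the $2$-isomorphisms $\epsilon_0,\epsilon_1,\psi$ of \cref{D:stack_homotopy}) are obtained by precomposing $\bar{\epsilon}_0,\bar{\epsilon}_1,\bar{\psi}$ with $\psi_A$ (respectively $\psi_A\times\id_{[0,1]}$) and splicing in $\nu_A,\nu_B$; in the non-weak case one sets $\beta:=\nu_A^{-1}\cdot(\bar{\beta}\circ\psi_A)$, and $H$ inherits from $\bar{H}$ the property of factoring through $\pr_1$, so it is a ghost homotopy. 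The only real work is in this last step: verifying that the compatibility identities $\psi_0=p\circ\epsilon_0$ and $\psi_1\cdot(p\circ\epsilon_1)=\alpha\cdot(\gamma\circ\tilde{i})$ (and, in the non-weak case, $p\circ\beta=\alpha\cdot(\gamma\circ\tilde{i})$) survive the transfer. This is $2$-categorical bookkeeping — one unwinds the definition of $\bar{\alpha}$ and cancels the inserted copies of $\nu_A,\nu_B$ and their inverses using naturality of $\psi$ and the interchange law — but it is the step that requires care, and it is where I expect the proof to be delicate rather than deep.
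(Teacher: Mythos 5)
Your proposal is correct and follows essentially the same route as the paper's proof: use \cref{C:fakification2} to replace $f$ and $g$ (up to $2$-isomorphism) by maps out of $|A|$ and $|B|$, solve the resulting lifting problem for the cellular inclusion $|i|$ via \cref{L:SerrevsSerre} or \cref{P:trivserre}, and precompose with the $\psi$ maps to transfer the solution back. The paper compresses the case analysis and the $2$-categorical bookkeeping into a single sentence; your write-up just makes those steps explicit.
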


\begin{proof}
Consider the lifting problem
	\[
\begin{tikzpicture}[scale=1]
	\node (a) at (0,2) {$\tilde{A}$};
	\node (b) at (0,0) {$\tilde{B}$};
	\node (c) at (2,2) {$\stack{X}$};
	\node (d) at (2,0) {$\stack{Y}$};
	
	\draw[->] (a) to node[left] {$\tilde{i}$} (b);
	\draw[->] (a) to node[above] {$f$} (c);
	\draw[->] (b) to node[below] {$g$} (d);
	\draw[->] (c) to node[right] {$p$} (d);
	
	\draw[dbl] (1,2) to [bend left=30] node[below right] {$\alpha$} (0,1);
	
\end{tikzpicture}
\]
First note that to solve it we are allowed to replace each of $f$ and $g$ with 
a 2-isomorphic morphism (and adjust $\alpha$ accordingly). 
So, we may assume, by \cref{C:fakification2}, that there are maps 
$f' : |A| \to \stack{X}$ and $g' : |B| \to \stack{Y}$ such that 
$f=f'\circ\psi_A$ and $g=g'\circ \psi_B$. Here, $\psi_A : \tilde{A} \to |A|$ 
is as in \cref{Eq:psi}.
Thus, our lifting problem translates to
	\[
\begin{tikzpicture}[scale=1]
	\node (a) at (0,2) {$|A|$};
	\node (b) at (0,0) {$|B|$};
	\node (c) at (2,2) {$\stack{X}$};
	\node (d) at (2,0) {$\stack{Y}$};
	
	\draw[->] (a) to node[left] {$|i|$} (b);
	\draw[->] (a) to node[above] {$f'$} (c);
	\draw[->] (b) to node[below] {$g'$} (d);
	\draw[->] (c) to node[right] {$p$} (d);
	
	\draw[dbl] (1,2) to [bend left=30] node[below right] {$\alpha'$} (0,1);
	
\end{tikzpicture}
\]
(The existence of the unique $\alpha'$ is guaranteed by \cref{C:fakification2}.) 
This problem can now be solved under the given assumptions. Precomposing with the 
$\psi$ maps, we obtain a solution to the original lifting problem. (Also see
 \cref{P:trivserre}.)
\end{proof}

\begin{lemma}
\label{L:strict_lift}
Let $p : \stack{X}\to \stack{Y}$  be a morphism of Serre topological stacks and 
$i : A \to B$   a monomorphism of finite non-singular simplicial sets. If $p$ is a 
Serre fibration and also a Reedy fibration, and either $p$ or $i$ is a weak equivalence, 
then $\tilde{i}: \tilde{A} \to \tilde{B}$ has strict LLP with respect to $p$. That is, 
if  in the diagram
	\[
\begin{tikzpicture}[scale=1]
	\node (a) at (0,2) {$\tilde{A}$};
	\node (b) at (0,0) {$\tilde{B}$};
	\node (c) at (2,2) {$\stack{X}$};
	\node (d) at (2,0) {$\stack{Y}$};
	
	\draw[->] (a) to node[left] {$\tilde{i}$} (b);
	\draw[->] (a) to node[above] {$f$} (c);
	\draw[->] (b) to node[below] {$g$} (d);
	\draw[->] (c) to node[right] {$p$} (d);
	
	\draw[->,dashed] (b) to node[above left] {$h$} (c);
	
\end{tikzpicture}
\]
the outer square is strictly commutative, then there exists a lift $h$ making both 
triangles strictly commutative.	
\end{lemma}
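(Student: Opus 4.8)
The plan is to split the work according to the two hypotheses on $p$: the Serre-fibration hypothesis (together with the fact that $p$ or $i$ is a weak equivalence) produces a lift that is correct up to $2$-isomorphism, and the Reedy-fibration hypothesis is exactly what is needed to rigidify that lift into a strict one. Since the square in the statement is strictly commutative, the auxiliary $2$-isomorphism $\alpha$ occurring in \cref{D:WLLP} can be taken to be the identity throughout, which is what makes the two steps dovetail cleanly.

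\emph{Step 1 (a $2$-commutative lift).} Because $p$ is a (genuine) Serre fibration and either $p$ or $i$ is a weak equivalence, \cref{L:fake_lift} applies in its non-weak form and shows that $\tilde i:\tilde A\to\tilde B$ has the left lifting property with respect to $p$ \emph{with a ghost homotopy}. (If one prefers not to quote \cref{L:fake_lift} in its non-weak reading, re-run its proof: use \cref{C:fakification2} to replace $f$ and $g$, up to $2$-isomorphism, by maps factoring through $\psi_A$ and $\psi_B$, thereby converting the problem into a lifting problem of topological spaces along $|i|:|A|\to|B|$ against $p$; this $|i|$ is a cellular inclusion of finite CW complexes which is a weak equivalence when $i$ is, so \cref{L:SerrevsSerre} solves it with a ghost homotopy, while if instead $p$ is a weak equivalence then $p$ is a trivial Serre fibration by \cref{P:trivserre} and again $|i|$, being a finite CW inclusion, has the ghost-homotopy lifting property; then precompose with the $\psi$-maps.) Applying this to the given square we obtain a morphism $h:\tilde B\to\stack{X}$ together with $2$-isomorphisms $\beta:f\Rightarrow h\circ\tilde i$ and $\gamma:g\Rightarrow p\circ h$ satisfying the coherence $p\circ\beta=\gamma\circ\tilde i$ (here we used $\alpha=\id$).

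\emph{Step 2 (strictification).} The data $(h,\beta,\gamma)$ is precisely a $2$-commutative lift of the strictly commutative outer square, which is the hypothesis of \cref{L:strictify}; that lemma requires exactly that $p$ be a Reedy fibration and $i$ a monomorphism of simplicial sets (it proceeds through \cref{P:reedy cond2}). Invoking it, we replace $h$ by a $2$-isomorphic morphism $h'$ with $h'\circ\tilde i=f$ and $p\circ h'=g$ on the nose. This $h'$ is the required strict lift. The only thing that needs care is the $2$-cell bookkeeping in Step 1 — checking that the coherence datum delivered there really is $p\circ\beta=\gamma\circ\tilde i$ and not something involving a nontrivial $\alpha$ — and this is automatic precisely because the original square is strict. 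Beyond that there is no genuine obstacle: the content of the lemma is the observation that neither hypothesis on $p$ can be dropped, since without Serre-ness there need be no lift even up to homotopy, and without the Reedy condition the lift of Step 1 is only $2$-commutative and cannot in general be straightened.
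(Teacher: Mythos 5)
Your proof is correct and follows exactly the paper's two-step argument: first obtain a $2$-commutative lift with ghost homotopy from \cref{L:fake_lift}, then rectify it via \cref{L:strictify} using the Reedy hypothesis. The extra bookkeeping you supply (that $\alpha=\id$ forces the coherence $p\circ\beta=\gamma\circ\tilde i$ needed as input to \cref{L:strictify}) is a correct elaboration of what the paper leaves implicit.
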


\begin{proof}
  First use \cref{L:fake_lift} to find a solution $h$ which makes the two triangles
  commutative up to 2-isomorphism. Then use \cref{L:strictify} to
  rectify $h$ to make the triangles strictly commutative.
\end{proof}

\begin{cor}
\label{C:strict_lift2}
Assumptions being as in \cref{L:strict_lift}, the map
	 \begin{align*}
		\Hom_{\presheafgpd}(\tilde{B},\stack{X}) \to
		 \Hom_{\presheafgpd}(\tilde{A},\stack{X})
		 \times_{\Hom_{\presheafgpd}(\tilde{A},\stack{Y})}
		  \Hom_{\presheafgpd}(\tilde{B},\stack{Y})
\end{align*}
and, equivalently (see \cref{L:restriction isom}), the map
 	\begin{align*}
		\Hom_{\sgpd}(B,\stack{X}\Del) \to
		 \Hom_{\sgpd}(A,\stack{X}\Del)
		 \times_{\Hom_{\sgpd}(A,\stack{Y}\Del)}
		  \Hom_{\sgpd}(B,\stack{Y}\Del)
	\end{align*}
are fibrations of groupoids that are surjective  on objects (hence, also on morphisms).	
\end{cor}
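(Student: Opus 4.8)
The plan is to split the statement into three assertions and dispatch each using results already in hand. By \cref{L:restriction isom} the two displayed maps are isomorphic as functors of groupoids, compatibly with their projections to $\Hom_{\presheafgpd}(\tilde{A},\stack{X})\times_{\Hom_{\presheafgpd}(\tilde{A},\stack{Y})}\Hom_{\presheafgpd}(\tilde{B},\stack{Y})$ versus $\Hom_{\sgpd}(A,\stack{X}\Del)\times_{\Hom_{\sgpd}(A,\stack{Y}\Del)}\Hom_{\sgpd}(B,\stack{Y}\Del)$, so it suffices to treat the first one; call it $\Psi$.

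First I would note that $\Psi$ is a fibration of groupoids. Since $p$ is a Reedy fibration and $i:A\to B$ is a monomorphism of simplicial sets, this is immediate from \cref{P:reedy cond2}: that proposition says exactly that $\Psi$ is a fibration of groupoids for any Reedy fibration $p$, and it uses no Serre hypothesis.

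Next, surjectivity on objects. An object of the target of $\Psi$ is a pair $(f,g)$ with $f:\tilde{A}\to\stack{X}$, $g:\tilde{B}\to\stack{Y}$, and $p\circ f=g\circ\tilde{i}$ — that is, a strictly commutative square of precisely the shape appearing in \cref{L:strict_lift}. Since the hypotheses of the present corollary are the hypotheses of that lemma ($p$ a morphism of Serre topological stacks which is both a Serre fibration and a Reedy fibration, $i$ a monomorphism of finite non-singular simplicial sets, and $p$ or $i$ a weak equivalence), \cref{L:strict_lift} supplies a strict lift $h:\tilde{B}\to\stack{X}$ with $h\circ\tilde{i}=f$ and $p\circ h=g$, i.e. an object of $\Hom_{\presheafgpd}(\tilde{B},\stack{X})$ with $\Psi(h)=(f,g)$. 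Hence $\Psi$ is surjective on objects.

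Finally, surjectivity on morphisms is a formal consequence: an isofibration of groupoids that is surjective on objects is surjective on morphisms. Given a morphism $\delta:(f_0,g_0)\to(f_1,g_1)$ in the target, choose $h_0$ with $\Psi(h_0)=(f_0,g_0)$, then use that $\Psi$ is a fibration to lift the isomorphism $\delta^{-1}$ along $h_0$, obtaining a morphism of $\Hom_{\presheafgpd}(\tilde{B},\stack{X})$ lying over $\delta$. I do not expect a real obstacle here, as the corollary is essentially a repackaging of \cref{L:strict_lift} and \cref{P:reedy cond2}; the only point needing a little care is the bookkeeping identifying an object of the strict fiber product with a strictly commutative square, together with checking that the identification of \cref{L:restriction isom} carries the one $\Psi$ to the other so that both inherit the same properties.
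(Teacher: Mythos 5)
Your proposal is correct and follows essentially the same route as the paper: the fibration claim is exactly \cref{P:reedy cond2}, surjectivity on objects is a restatement of \cref{L:strict_lift}, and the two displayed maps are identified via \cref{L:restriction isom}. Your explicit remark that an isofibration of groupoids surjective on objects is surjective on morphisms is precisely what the paper's parenthetical ``(hence, also on morphisms)'' is implicitly invoking.
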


\begin{proof}
Surjectivity on objects is simply a restatement of \cref{L:strict_lift}.
They are fibrations by \cref{P:reedy cond2}.
\end{proof}

\subsection{Strict lifts for weak Serre+Reedy fibrations}
\label{SS:strictliftingweakSerre}

\begin{lemma}
\label{L:weak_lifting}
Let $p : \stack{X}\to \stack{Y}$  be a morphism of Serre topological stacks and $i : A \to B$ 
a monomorphism of finite non-singular simplicial sets. If $p$ is a weak Serre fibration 
and also a Reedy fibration, and either $p$ or $i$ is a weak equivalence, then 
$\tilde{i}: \tilde{A} \to \tilde{B}$ has \emph{strict} WLLP with respect to $p$ 
in the following sense.  If in the diagram
	\[
\begin{tikzpicture}[scale=1]
	\node (a) at (0,2) {$\tilde{A}$};
	\node (b) at (0,0) {$\tilde{B}$};
	\node (c) at (2,2) {$\stack{X}$};
	\node (d) at (2,0) {$\stack{Y}$};
	
	\draw[->] (a) to node[left] {$\tilde{i}$} (b);
	\draw[->] (a) to node[above] {$f$} (c);
	\draw[->] (b) to node[below] {$g$} (d);
	\draw[->] (c) to node[right] {$p$} (d);
	
	\draw[->,dashed] (b) to node[above left] {$h$} (c);
	
\end{tikzpicture}
\]
 the outer square is strictly commutative, then there exists a lift $h$ and a
 morphism $H : \widetilde{A\times\Delta^1} \to \stack{X}$ such that
 \begin{itemize}
   \item[i)] the lower triangle is strictly commutative, and
   \item[ii)] $H$ is a strict restricted fiberwise homotopy from $f$
   to $h\circ\tilde{i}$ relative to $\stack{Y}$ (see \cref{SS:restricted}), where
   strictness means that  $H_0=f$ and $H_1=h\circ\tilde{i}$.
 \end{itemize}  	
\end{lemma}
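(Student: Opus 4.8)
The plan is to produce a lax lift via the weak lifting property of \cref{L:fake_lift} and then rectify it in two stages, the Reedy fibration hypothesis powering each rectification. First, since the outer square is strictly commutative, the comparison $2$-isomorphism of \cref{D:WLLP} is the identity, so \cref{L:fake_lift} applies directly and yields a morphism $h_0 : \tilde B \to \stack{X}$, a $2$-isomorphism $\gamma : g \Rightarrow p\circ h_0$, and a fiberwise homotopy $(H^{(0)},\epsilon_0,\epsilon_1,\psi^{(0)})$ from $f$ to $h_0\circ\tilde i$ relative to $\gamma\circ\tilde i$. Precomposing $H^{(0)}$ with the natural map $\widetilde{A\times\Delta^1} \to \tilde A\times[0,1]$, \cref{L:htpy=>fakehtpy} converts this into a restricted fiberwise homotopy $(H^{(1)},\epsilon_0,\epsilon_1,\psi^{(1)})$ from $f$ to $h_0\circ\tilde i$ relative to $\gamma\circ\tilde i$; in particular $\psi^{(1)}_0 = p\circ\epsilon_0$ and $\psi^{(1)}_1\cdot(p\circ\epsilon_1) = \gamma\circ\tilde i$.

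Next I rectify the lift. By \cref{P:reedy cond2} applied to the monomorphism $\emptyset \hookrightarrow B$, the map $\Hom_{\presheafgpd}(\tilde B,\stack{X}) \to \Hom_{\presheafgpd}(\tilde B,\stack{Y})$ induced by $p$ is a fibration of groupoids; lifting $\gamma$ along it produces $h : \tilde B \to \stack{X}$ with $p\circ h = g$ together with a $2$-isomorphism $\theta : h \Rightarrow h_0$ with $p\circ\theta = \gamma$. Set $u := \theta\circ\tilde i : h\circ\tilde i \Rightarrow h_0\circ\tilde i$ (so $p\circ u = \gamma\circ\tilde i$) and replace $\epsilon_1$ by $\epsilon_1' := \epsilon_1\cdot u^{-1} : H^{(1)}_1 \Rightarrow h\circ\tilde i$. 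Then $(H^{(1)},\epsilon_0,\epsilon_1',\psi^{(1)})$ is a restricted fiberwise homotopy from $f$ to $h\circ\tilde i$, and --- this is the crucial cancellation --- it is now relative to the \emph{identity} $2$-isomorphism of $p\circ f$, since $\psi^{(1)}_1\cdot(p\circ\epsilon_1') = \psi^{(1)}_1\cdot(p\circ\epsilon_1)\cdot(p\circ u)^{-1} = (\gamma\circ\tilde i)\cdot(\gamma\circ\tilde i)^{-1} = \id$.

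Finally I rectify the homotopy by feeding the above into \cref{L:strictify}, applied to the monomorphism of finite non-singular simplicial sets $A\times\partial\Delta^1 \hookrightarrow A\times\Delta^1$: the outer square has top map $\widetilde{A\times\partial\Delta^1} \to \stack{X}$ equal to $f$ on $\tilde A\times\{0\}$ and to $h\circ\tilde i$ on $\tilde A\times\{1\}$, bottom map $p\circ f\circ\widetilde{\pr}_1 : \widetilde{A\times\Delta^1} \to \stack{Y}$, and right map $p$; it is strictly commutative because $p\circ h\circ\tilde i = g\circ\tilde i = p\circ f$. With $H^{(1)}$ as the lax solution, the boundary $2$-isomorphism given by $\epsilon_0$ at time $0$ and $(\epsilon_1')^{-1}$ at time $1$, and $\psi^{(1)}$ as the $2$-isomorphism over $\stack{Y}$, the identities $p\circ\epsilon_0 = \psi^{(1)}_0$ and $(p\circ\epsilon_1')^{-1} = \psi^{(1)}_1$ established above are exactly the $2$-commutativity hypotheses of \cref{L:strictify}. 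The lemma then provides $H : \widetilde{A\times\Delta^1} \to \stack{X}$, $2$-isomorphic to $H^{(1)}$, with $H_0 = f$, $H_1 = h\circ\tilde i$ and $p\circ H = p\circ f\circ\widetilde{\pr}_1$. The pair $(h,H)$ is the desired solution: the lower triangle commutes strictly, and $(H,\id,\id,\id)$ is a strict restricted fiberwise homotopy from $f$ to $h\circ\tilde i$ relative to $\stack{Y}$ (the choice $\varphi = \id$ being legitimate since $p\circ f = p\circ h\circ\tilde i$).

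The step I expect to be most delicate is the rectification of the lift: one must check that transporting the homotopy along $u$ collapses the ``relative to $\gamma\circ\tilde i$'' condition to ``relative to $\stack{Y}$'' and that the resulting data satisfies the two compatibility identities needed to invoke \cref{L:strictify}. Everything else is assembling \cref{L:fake_lift}, \cref{L:htpy=>fakehtpy}, \cref{P:reedy cond2} and \cref{L:strictify} in the right order; the weak-Serre-fibration and weak-equivalence hypotheses enter only through \cref{L:fake_lift}, while the Reedy fibration hypothesis drives the two rectifications.
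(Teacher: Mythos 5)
Your proof is correct, and it follows the same overall strategy as the paper---obtain a lax lift from \cref{L:fake_lift}, then use the Reedy hypothesis to rectify---but the rectification is organized differently. The paper first converts the fiberwise homotopy to a restricted one, straightens it over $\stack{Y}$ and fixes its starting point via \cref{C:adjust_htpy}, and only then adjusts $h$ by applying \cref{L:strictify} to $\tilde{i}:\tilde{A}\to\tilde{B}$ with top map $H_1$, killing $\epsilon_1$ and $\gamma$ simultaneously. You instead fix the lift first, lifting $\gamma$ through the groupoid fibration $\Hom_{\presheafgpd}(\tilde{B},\stack{X})\to\Hom_{\presheafgpd}(\tilde{B},\stack{Y})$ (from \cref{P:reedy cond2} with $\emptyset\hookrightarrow B$) to get $h$ strictly over $g$, transport the endpoint data along $\theta\circ\tilde{i}$, and then rectify the entire homotopy in one application of \cref{L:strictify} to the boundary inclusion $A\times\partial\Delta^1\hookrightarrow A\times\Delta^1$. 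Your route bypasses \cref{C:adjust_htpy} entirely and handles both endpoints and the over-$\stack{Y}$ condition at once; the cancellation $\psi^{(1)}_1\cdot(p\circ\epsilon_1')=\id$ and the two compatibility identities $p\circ\epsilon_0=\psi^{(1)}_0$, $p\circ(\epsilon_1')^{-1}=\psi^{(1)}_1$ that you verify are exactly what make the single invocation of \cref{L:strictify} legitimate (note that \cref{L:strictify} only needs a monomorphism of simplicial sets, which $A\times\partial\Delta^1\hookrightarrow A\times\Delta^1$ is). The trade-off is essentially cosmetic: the paper's version isolates the ``adjust a homotopy at time $0$'' step as a reusable corollary (which it needs again in \cref{C:weak_lifting2}), while yours is slightly more self-contained here.
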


\begin{proof}
 First use \cref{L:fake_lift} to find a solution $h$ which makes the lower triangle
  commutative up to a 2-isomorphism and the upper triangle commutative
  up to fiberwise homotopy  $H' : \tilde{A}\times[0,1] \to \stack{X}$,
  as in the diagram
	\[
\begin{tikzpicture}[scale=1]

	\node (a) at (0,2) {$\tilde{A}$};
	\node (b) at (0,0) {$\tilde{B}$};
	\node (c) at (2,2) {$\stack{X}$};
	\node (d) at (2,0) {$\stack{Y}$};
	
	\draw[->] (a) to node[left] {$\tilde{i}$} (b);
	\draw[->] (a) to node[above] {$f$} (c);
	\draw[->] (b) to node[below] {$g$} (d);
	\draw[->] (c) to node[right] {$p$} (d);
	
	\draw[->] (b) to node[above left,inner sep=1pt] {$h$} (c);
	
	\draw[dbl] (1,0) to [bend right=0] node[below right,inner sep=1pt] {$\gamma$} (2,1);
	
	\node at (0.45,1.55) {$H'$};
\end{tikzpicture}
\]
First, we rectify $H'$ using \cref{C:adjust_htpy}, as follows. (Note that \cref{C:adjust_htpy} 
only applies to restricted homotopy and not ordinary homotopy, so we need to replace 
$H'$ by the corresponding restricted homotopy $H : \widetilde{A\times\Delta^1} \to \stack{X}$;
see \cref{L:htpy=>fakehtpy}.) Since $p$ is a Reedy fibration and 
$\widetilde{A\times\Delta^1}$ is cofibrant, \cref{P:reedy cond2}
(with $B=A\times\Delta^1$ and $A$ the empty set)  implies that
\[\Hom_{\presheafgpd}(\widetilde{A\times\Delta^1},\stack{X}) 
\to\Hom_{\presheafgpd}(\widetilde{A\times\Delta^1},\stack{Y})\] 
is a fibration of groupoids. So,
we can replace $H$ by a 2-isomorphic map
so that it becomes relative to 
$\stack{Y}$ (namely,  $p\circ f \circ \widetilde{\pr}_1 = p \circ H$ on the nose); see
\cref{R:restrictedfiberwisehomotopy} to see why this is allowed. 
We can now use  \cref{C:adjust_htpy} to rectify $H$ so that $H_0=f$.

There are two more things to do now: ensure that the 2-isomorphism 
$\epsilon_1 : H_1 \Rightarrow h\circ\tilde{i}$ becomes an equality, and that 
$\gamma=\id$. This is achieved by  applying \cref{L:strictify} to the diagram
	\[
	\begin{tikzpicture}[scale=1]
	
	\node (a) at (0,2) {$\tilde{A}$};
	\node (b) at (0,0) {$\tilde{B}$};
	\node (c) at (2,2) {$\stack{X}$};
	\node (d) at (2,0) {$\stack{Y}$};
	
	\draw[->] (a) to node[left] {$\tilde{i}$} (b);
	\draw[->] (a) to node[above] {$H_1$} (c);
	\draw[->] (b) to node[below] {$g$} (d);
	\draw[->] (c) to node[right] {$p$} (d);
	
	\draw[->] (b) to node[above left,inner sep=1pt] {$h$} (c);
	
	\draw[dbl] (1,0) to [bend right=0] node[below right,inner sep=1pt] {$\gamma$} (2,1);
	\draw[dbl] (1,2) to [bend right=0] node[above left,inner sep=1pt] {$\epsilon_1$} (0,1);
	
\end{tikzpicture}
\]
  to adjust $h$ so that $\epsilon_1$ and $\gamma$ become the identity 2-isomorphisms.
\end{proof}

Let $p : \stack{X}\to \stack{Y}$  be a map of presheaves of groupoids and
$i : A \to B$ a map of simplicial sets. Let $L$  be the groupoid 
\begin{align*}
L:= & \ \Hom_{\presheafgpd}(\widetilde{A\times\Delta^1},\stack{X})_p
		\times_{\Hom_{\presheafgpd}(\tilde{A},\stack{X})}
		\Hom_{\presheafgpd}(\tilde{B},\stack{X}) \\
  \cong  & \ 		
         \Hom_{\sgpd}(A\times\Delta^1, \stack{X}\Del)_p
  \times_{\Hom_{\sgpd}(A, \stack{X}\Del)}
  \Hom_{\sgpd}(B, \stack{X}\Del) 
\end{align*}
of pairs $(H,h)$, where $h : \tilde{B} \to \stack{X}$
is a morphism and $H : \widetilde{A\times\Delta^1} \to  \stack{X}$
is a restricted fiberwise homotopy relative to $\stack{Y}$  such that $H_1=h\circ\tilde{i}$.
Here $H_1:\tilde{A} \to \stack{X}$  stands for the precomposition of
$H$ with the time $1$ inclusion map $\tilde{A} \to \widetilde{A\times\Delta^1}$, 
and the subscript $p$ stands for `fiberwise  relative to $\stack{Y}$'. More precisely,
\begin{align*} 
 \Hom_{\presheafgpd}(\widetilde{A\times\Delta^1},\stack{X})_p:= & \
  \Hom_{\presheafgpd}(\widetilde{A\times\Delta^1},\stack{X})
  \times_{\Hom_{\presheafgpd}(\widetilde{A\times\Delta^1},\stack{Y})}  
  \Hom_{\presheafgpd}(\tilde{A},\stack{Y}) \\
  \cong & \ 
   \Hom_{\sgpd}(A\times\Delta^1,\stack{X}\Del)
  \times_{\Hom_{\sgpd}(A\times\Delta^1,\stack{Y}\Del)}  
  \Hom_{\sgpd}(A,\stack{Y}\Del),
 \end{align*}
where the first map in the  fiber product is induced by $p$, and the 
second map is induced  by the  projection $\widetilde{A\times\Delta^1} \to \tilde{A}$.

Thus, we have isomorphisms of groupoids
\begin{align*} 
L\cong   & \
  \Hom_{\presheafgpd}(\widetilde{A\times\Delta^1},\stack{X})
   \times_{\Hom_{\presheafgpd}(\widetilde{A\times\Delta^1},\stack{Y})}
   \Hom_{\presheafgpd}(\tilde{A},\stack{Y}) \\
   &  \ \ \ \ \ \ \ \ \ \ \ \ \times_{\Hom_{\presheafgpd}(\tilde{A},\stack{X})}
		\Hom_{\presheafgpd}(\tilde{B},\stack{X}) \\ 
	\cong	&  
 \Hom_{\sgpd}(A\times\Delta^1,\stack{X}\Del)
    \times_{\Hom_{\sgpd}(A\times\Delta^1,\stack{Y}\Del)}  
  \Hom_{\sgpd}(A,\stack{Y}\Del) \\
      & \ \ \ \ \ \ \ \ \ \ \ \  \times_{\Hom_{\sgpd}(A, \stack{X}\Del)}
    \Hom_{\sgpd}(B, \stack{X}\Del). 	
\end{align*} 

%
%
%

\begin{cor}
\label{C:weak_lifting2}
Notation being as above and assumptions being as in \cref{L:weak_lifting},
the map
\begin{align*}
		L  & \to
		 \Hom_{\presheafgpd}(\tilde{A},\stack{X})
		 \times_{\Hom_{\presheafgpd}(\tilde{A},\stack{Y})}
		  \Hom_{\presheafgpd}(\tilde{B},\stack{Y}) \\
		(H,h) & \mapsto (H_0,p\circ h)
\end{align*}
and, equivalently (see \cref{L:restriction isom}), the map
\begin{align*}
		L  & \to
		 \Hom_{\sgpd}(A,\stack{X}\Del)
		 \times_{\Hom_{\sgpd}(A,\stack{Y}\Del)}
		  \Hom_{\sgpd}(B,\stack{Y}\Del)
\end{align*}
are fibrations of groupoids that are surjective  on objects (hence, also on morphisms,
as well as tuples of composable morphisms).
\end{cor}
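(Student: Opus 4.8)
The statement has two substantive parts — that the functor $F\colon L\to T$ in question (I write $T$ for its target groupoid $\Hom_{\presheafgpd}(\tilde A,\stack X)\times_{\Hom_{\presheafgpd}(\tilde A,\stack Y)}\Hom_{\presheafgpd}(\tilde B,\stack Y)$) is surjective on objects, and that it is a fibration of groupoids — the parenthetical assertions being formal consequences: for groupoids, a fibration that is surjective on objects is surjective on strings of composable morphisms, since given $d_0\to\cdots\to d_n$ in $T$ one picks a preimage of $d_n$ and lifts the arrows from the right one at a time. So the plan is to establish surjectivity on objects and the fibration property.

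Surjectivity on objects is just a reformulation of \cref{L:weak_lifting}. An object of $T$ is precisely a strictly commutative square with sides $f\colon\tilde A\to\stack X$, $g\colon\tilde B\to\stack Y$, $\tilde i$ and $p$; \cref{L:weak_lifting}, whose hypotheses are exactly the ones we assume, supplies a lift $h\colon\tilde B\to\stack X$ with $p\circ h=g$ together with a strict restricted fiberwise homotopy $H\colon\widetilde{A\times\Delta^1}\to\stack X$ relative to $\stack Y$ with $H_0=f$ and $H_1=h\circ\tilde i$, i.e. an object $(H,h)\in L$ lying over the given square.

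For the fibration property I would factor $F$ as $L\xrightarrow{\ \lambda\ }L_0\xrightarrow{\ \mu\ }T$. Here $L_0$ is the groupoid of pairs $(H,v)$ with $H\colon\widetilde{A\times\Delta^1}\to\stack X$ a restricted fiberwise homotopy relative to $\stack Y$ and $v\colon\tilde B\to\stack Y$ a morphism such that $p\circ H_1=v\circ\tilde i$; equivalently $L_0=\Hom_{\presheafgpd}(\widetilde{A\times\Delta^1},\stack X)_p\times_{\Hom_{\presheafgpd}(\tilde A,\stack Y)}\Hom_{\presheafgpd}(\tilde B,\stack Y)$, the fibre product over $H\mapsto p\circ H_1$ and $v\mapsto v\circ\tilde i$. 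Put $\lambda(H,h)=(H,p\circ h)$ and $\mu(H,v)=(H_0,v)$; $\mu$ does land in $T$, since $p\circ H_0=p\circ H_1$ for a homotopy relative to $\stack Y$, and $F=\mu\circ\lambda$. One sees directly that $L\cong L_0\times_T\Hom_{\presheafgpd}(\tilde B,\stack X)$, the fibre product formed along $L_0\to T$, $(H,v)\mapsto(H_1,v)$, and $\Hom_{\presheafgpd}(\tilde B,\stack X)\to T$, $h\mapsto(h\circ\tilde i,\,p\circ h)$, with $\lambda$ the projection to $L_0$. The second of these functors is a fibration of groupoids by \cref{P:reedy cond2} (applied to the monomorphism $i\colon A\to B$; only the Reedy hypothesis on $p$ enters here), and fibrations of groupoids are stable under strict pullback along arbitrary functors, so $\lambda$ is a fibration. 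For $\mu$: given $(H,v)\in L_0$ and an isomorphism $(\alpha,\beta)\colon(u',v')\Rightarrow(H_0,v)$ in $T$ (so $p\circ\alpha=\beta\circ\tilde i$), I would apply \cref{C:adjust_htpy} to $H$ and the $2$-isomorphism $\alpha\colon u'\Rightarrow H_0$ (playing the role of its $\beta$), obtaining a restricted fiberwise homotopy $H'$ relative to $\stack Y$ with $H'_0=u'$ and a $2$-isomorphism $\Theta\colon H'\Rightarrow H$ with $\Theta_0=\alpha$ and $p\circ\Theta=(p\circ\alpha)\circ\widetilde{\pr}_1$. Then $(H',v')\in L_0$ because $p\circ H'_1=p\circ H'_0=p\circ u'=v'\circ\tilde i$, and $(\Theta,\beta)\colon(H',v')\Rightarrow(H,v)$ is a morphism of $L_0$ lying over $(\alpha,\beta)$ — the remaining compatibilities are immediate from the output of \cref{C:adjust_htpy}, except for $p\circ\Theta_1=\beta\circ\tilde i$, which is the relation $p\circ\alpha=\beta\circ\tilde i$. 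Thus $\mu$, and hence $F=\mu\circ\lambda$, is a fibration of groupoids.

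The real obstacle — the reason this is not a one-liner like \cref{C:strict_lift2}, whose fibration claim was literally an instance of \cref{P:reedy cond2} — is the mismatch between the $H_0$ that occurs in the output of $F$ and the $H_1$ that occurs in the defining relation $H_1=h\circ\tilde i$ of $L$: an isomorphism prescribed against $H_0$ must be carried along the homotopy before it can be used to correct $h$, and that transport is exactly the content of \cref{C:adjust_htpy}; everything else is manipulation of fibre products of groupoids. Note, finally, that only the Reedy-fibration hypothesis on $p$ is used in the fibration part of the argument; the weak Serre fibration and weak equivalence hypotheses — indeed even the hypothesis that the stacks are Serre topological stacks — are invoked solely through \cref{L:weak_lifting}, that is, only for surjectivity on objects.
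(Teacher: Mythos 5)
Your proposal is correct and uses exactly the same ingredients as the paper: surjectivity on objects is read off from \cref{L:weak_lifting}, and the fibration property is obtained by first transporting the given $2$-isomorphism at time $0$ along the homotopy via \cref{C:adjust_htpy} and then extending over $\tilde{B}$ via the Reedy fibration property of \cref{P:reedy cond2}. The only (cosmetic) difference is that you package the fibrancy argument as a factorization $L \to L_0 \to T$ into two fibrations, whereas the paper verifies the isomorphism-lifting property of the composite directly; the two steps of the paper's verification correspond precisely to your $\mu$ and $\lambda$.
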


\begin{proof} 
Let us denote the map in question by $\Psi$. 
The surjectivity of $\Psi$
on objects is simply a restatement of \cref{L:weak_lifting}. Let us spell this out.
Consider an object in
 \[		 \Hom_{\presheafgpd}(\tilde{A},\stack{X})
		 \times_{\Hom_{\presheafgpd}(\tilde{A},\stack{Y})}
		  \Hom_{\presheafgpd}(\tilde{B},\stack{Y}), \]
namely a pair $(f,g)$ making the outer square in the following diagram strictly commutative:
	\[
\begin{tikzpicture}[scale=1]
	\node (a) at (0,2) {$\tilde{A}$};
	\node (b) at (0,0) {$\tilde{B}$};
	\node (c) at (2,2) {$\stack{X}$};
	\node (d) at (2,0) {$\stack{Y}$};
	
	\draw[->] (a) to node[left] {$\tilde{i}$} (b);
	\draw[->] (a) to node[above] {$f$} (c);
	\draw[->] (b) to node[below] {$g$} (d);
	\draw[->] (c) to node[right] {$p$} (d);
	
	\draw[->,dashed] (b) to node[above left] {$h$} (c);
	
\end{tikzpicture}
\]
By \cref{L:weak_lifting}, this lifting problem has a weak solution $(H,h)$,
namely $h : \tilde{B} \to \stack{X}$ and 
$H : \widetilde{A\times\Delta^1} \to \stack{X}$ such that
 \begin{itemize}
   \item[i)] the lower triangle is strictly commutative, and
   \item[ii)] $H$ is a strict restricted fiberwise homotopy from $f$
   to $h\circ\tilde{i}$ relative to $\stack{Y}$, where
   strictness means that  $H_0=f$ and $H_1=h\circ\tilde{i}$.
 \end{itemize}  	
By definition of $L$, 
such a pair determines an object in
$L$ mapping to the pair $(f,g)$, $\Psi(H,h)=(f,g)$. 
This proves surjectivity on objects.

To prove fibrancy, suppose in the above setting that we are also given 
2-isomorphisms $\beta : f' \Rightarrow f$ and $\gamma :g' \Rightarrow g$
such that $p\circ \beta  = \gamma\circ \tilde{i}$. 
We need to construct a pair $(\Theta, \theta) \in \mor(L)$ with the following
properties:
    \begin{itemize}
   \item[i)] $\theta:  h' \Rightarrow h$ is relative to 
   $\gamma$ (that is, $p\circ\theta =\gamma$),
   \item[ii)] $\Theta : H' \Rightarrow H$ is relative to 
   $\gamma\circ \tilde{i}\circ\widetilde{\pr_1}=p\circ \beta \circ\widetilde{\pr_1}$
   (that is, $p\circ \Theta = \gamma \circ \tilde{i} \circ \widetilde{\pr_1}$), 
    $\Theta_0=\beta$ and $\Theta_1=\theta\circ \tilde{i}$.
 \end{itemize}

By \cref{C:adjust_htpy},
we have a restricted fiberwise homotopy $H' : \widetilde{A\times\Delta^1} \to \stack{X}$ 
relative to $\stack{Y}$,
and a 2-isomorphism $\Theta : H' \Rightarrow H$ relative
to $p\circ\beta\circ\widetilde{\pr_1}$ such that
$f'=H'_0$ and $\beta=\Theta_0$. This is our desired $\Theta$. 

To find $\theta$, note that
its restriction to $\tilde{A}$ is already determined, namely $\Theta_1$. So, we need
to extend $\Theta_1$ to the whole of $\tilde{B}$ in such a way that
$p\circ\theta=\gamma$. We do this by solving the following
lifting problem for $(h', \theta)$:
	\[
\begin{tikzpicture}[scale=1]
	\node (a) at (0,3) {$\tilde{A}$};
	\node (b) at (0,0) {$\tilde{B}$};
	\node (c) at (4,3) {$\stack{X}$};
	\node (d) at (4,0) {$\stack{Y}$};
	
	\draw[->] (a) to node[left] {$\tilde{i}$} (b);
	\draw[->] (a) to [bend left = 15]   node[below, inner sep=1pt] (H') {}
	      node[above, pos=0.4] {$H_1'$}  (c);
	\draw[->] (a) to [bend right = 15]  node[above, inner sep=1pt] (H) {}
	      node[below, pos=0.4] {$H_1$} (c);
	\draw[->] (b) to [bend left = 15]   node[below, inner sep=1pt] (g') {}
	      node[above] {$g'$} (d);
	\draw[->] (b) to [bend right = 15]  node[below, inner sep=1pt] (g) {}
	      node[below] {$g$} (d);
	\draw[->] (c) to node[right] {$p$} (d);
	
	\draw[->,dashed] (b) to  [bend left = 15]   node[below, inner sep=1pt] (h') {}
	      node[above left, pos=0.3] {$h'$} (c);
	\draw[->]        (b) to  [bend right = 15]  node[below, inner sep=1pt] (h) {}
	      node[below right, pos=0.7] {$h$} (c);
	
	\draw[double,
              double equal sign distance,
              -implies,
              shorten >= 2pt,
              shorten <= 2pt
              ]
              (H') to node[left] {$\Theta_1$} (H);	
     \draw[double,
              double equal sign distance,
              -implies,
              shorten >= 2pt,
              shorten <= 2pt
              ]
              (g') to node[left] {$\gamma$} (g);	 
    \draw[double, dashed,
              double equal sign distance,
              -implies,
              shorten >= 2pt,
              shorten <= 2pt
              ]
              (h') to node[left] {$\theta$} (h);	                   
	
\end{tikzpicture}
\]
Existence of a solution is guaranteed by 
\cref{P:reedy cond2}. 
\end{proof}

%
%
%
%

\section{Singular functor preserves fibrations}
\label{S:Kan}
In this section we study the effect of the  functor $\sing$ on fibrations of stacks.
We begin with a simple example to show why the Reedy condition is necessary in the
statement of our main result (\cref{T:sing of WSR fib}).

\begin{ex}
Let $X$ be a trivial groupoid with more than one point, namely one that is 
equivalent but not equal to a point. 
Let $\stack{X}$ be the constant presheaf with value $X$ (viewed as a stack).
Pick a point in $X$ and consider the map $*\to \stack{X}$. This map
is an equivalence of stacks, hence is a Serre fibration. However, the
induced map of simplicial sets
 \[\sing(*)=* \to N(X)=\sing(\stack{X})\]
is  not a Kan fibration.
\end{ex}

\subsection{Weak Kan fibrations}
\label{SS:weakKan}

In what follows, the homotopy groups $\pi_n(X,x)$ of a simplicial set $X$ which is not 
necessarily Kan are taken to be those of its geometric realization.

\begin{defn}
\label{D:weak kan}
	We say that a map of simplicial sets $p:X \to Y$ is 
    a \emph{weak  Kan fibration} if for any
    trivial cofibration  $i : A \to \Delta^n$, 
    every lifting problem
\[
\begin{tikzpicture}[scale=1]

	\node (a) at (0,2) {$A$};
	\node (b) at (0,0) {$\Delta^n$};
	\node (c) at (2,2) {$X$};
	\node (d) at (2,0) {$Y$};
	
	\draw[->] (a) to node[left] {$i$} (b);
	\draw[->] (a) to node[above] {$f$} (c);
	\draw[->] (b) to node[below] {$g$} (d);
	\draw[->] (c) to node[right] {$p$} (d);
	
	\draw[->,dashed] (b) to node[above left] {$h$} (c);
	
\end{tikzpicture}
\]
	has a weak solution; namely, 
	there exists $h:\Delta^n \to X$  such that the bottom triangle commutes and
	$f: A \to X$ is fiberwise homotopic to
	$h \circ i : A \to X$ relative to $Y$.
	We say that $p$ is 
    a \emph{weak trivial Kan fibration}  if it is a weak  Kan fibration
    and, in addition, it has the weak lifting property with respect to
    the inclusions $\partial\Delta^n \to \Delta^n$, $n\geq 0$. 
\end{defn}


In the above definition, a fiberwise homotopy relative to $Y$ means
a map of simplicial sets $H :  A\times\Delta^1 \to X$ such that $p\circ H$ is
the trivial homotopy from $p\circ f$ to itself.

\begin{rem}
We do not know if the above definition is the ``correct'' simplicial counterpart
of the notion of a weak Serre fibration, but it serves our purposes in this paper
(thanks to Lemma \ref{L:d^*}).
It is not clear to us whether a weak Kan fibration will have the weak left lifting 
property with respect to \emph{all} trivial cofibrations.
\end{rem}

\begin{lemma}
\label{L:trivialgamma}
Let $p: X \to Y$ be a trivial weak Kan fibration. Assume that $Y$ is a Kan 
simplicial set,
and that there exists a 
Kan simplicial set $X'$  together with a weak equivalence $X' \to X$. 
Then, $p$ is a weak equivalence.
\end{lemma}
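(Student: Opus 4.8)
I want to show that the map $p : X \to Y$, assumed to be a trivial weak Kan fibration with $Y$ Kan and admitting a weak equivalence $X' \to X$ from a Kan complex $X'$, is a weak equivalence. The strategy is to prove that $p$ induces isomorphisms on all homotopy groups (with basepoints, and on $\pi_0$), working via the long exact sequence of a ``fibration-like'' situation or, more directly, via an explicit lifting argument with spheres and disks modelled by $\partial\Delta^n \hookrightarrow \Delta^n$. Since homotopy groups of a general simplicial set are defined here as those of the geometric realization, I will want to translate the simplicial weak lifting property into statements about maps from $|\partial\Delta^n|$ and $|\Delta^n|$; but it is cleaner to stay simplicial and use the Kan replacements.

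\emph{First step.} Replace $X$ by the given Kan complex $X'$ up to weak equivalence. Because $Y$ is already Kan, we may try to factor (or compare) the composite $X' \to X \xrightarrow{p} Y$ through a genuine Kan fibration $\bar{p} : \bar{X} \to Y$ with $X' \to \bar{X}$ a weak equivalence (standard factorization in the Kan--Quillen model structure). Then it suffices to show $\bar{p}$ is a weak equivalence of Kan complexes, for which — everything now being fibrant — it is enough to show $\bar{p}$ is a homotopy equivalence, equivalently that it induces isomorphisms on $\pi_n$ for all basepoints. The point of the hypotheses is exactly to feed the weak lifting properties of $p$ into surjectivity and injectivity on homotopy groups.

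\emph{Key steps for the $\pi_n$-isomorphism.} For surjectivity of $\pi_n(X,x) \to \pi_n(Y,y)$: given a class in $\pi_n(Y)$, represent it (using that $Y$ is Kan) by a map $\Delta^n \to Y$ sending $\partial\Delta^n$ to the basepoint; the inclusion of the basepoint $\partial\Delta^n \to \Delta^n$ composed with the point in $X$ gives a square to which the weak lifting property against $\partial\Delta^n \hookrightarrow \Delta^n$ (part of being a \emph{trivial} weak Kan fibration) applies, yielding $h : \Delta^n \to X$ with $p h$ equal to the given map on the nose and $h|_{\partial\Delta^n}$ fiberwise homotopic \emph{rel $Y$} to the constant map; since that homotopy lies over the trivial homotopy in $Y$, one can push it off to see the class of $h$ maps to the desired class — here the Kan-ness of $X'$ (hence the ability to rectify fiberwise homotopies up to based homotopy in $X$) is what makes the argument go through. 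For injectivity: if a map $\Delta^n \to X$ representing $0$ in $\pi_n(Y)$ is given together with a nullhomotopy downstairs, assemble a square with the nullhomotopy as bottom map and the cylinder/horn data on top, and apply the weak Kan lifting property against a trivial cofibration $A \to \Delta^{n+1}$ (the relevant $A$ being the union of the cylinder ends and one side, which \emph{is} a trivial cofibration into $\Delta^{n+1}$); the weak solution gives a nullhomotopy upstairs up to fiberwise homotopy rel $Y$, which again is rectified to a genuine based nullhomotopy in $X$ using the Kan replacement $X'$. The $\pi_0$ statement is the $n=0$ degenerate case of the same two arguments.

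\emph{Main obstacle.} The technical heart is handling the gap between ``weak'' lifts (lifts only up to fiberwise homotopy relative to $Y$) and honest lifts: a fiberwise homotopy rel $Y$ produced by the weak Kan property need not be rectifiable to an equality of maps, so I cannot directly conclude that $p$ lifts maps on the nose. This is precisely where the hypotheses that $Y$ is Kan and that $X$ is weakly equivalent to a Kan complex $X'$ are used — they let me transport the fiberwise homotopy into the Kan world where homotopies can be composed, inverted and straightened, so that the \emph{homotopy class} of the lift is well defined even though the lift itself is not canonical. Making this transport precise, and checking that ``fiberwise homotopic rel $Y$'' maps induce the same map on $\pi_n$ after passing to $X'$ (so that the two maps $\pi_n(X)\to\pi_n(Y)$ constructed agree and are mutually inverse with $p_*$), is the step I expect to require the most care; it will likely lean on \cref{L:respectshomotopy}-style compatibility of $\sing$/nerve with homotopies, or on an elementary direct manipulation of simplicial homotopies once everything is fibrant.
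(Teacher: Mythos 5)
Your overall strategy---deducing that $p$ induces isomorphisms on all homotopy groups directly from the two weak lifting properties---is exactly the paper's, and your surjectivity sketch is essentially correct (indeed slightly cleaner than the paper's: you apply the weak lift for $\partial\Delta^n\hookrightarrow\Delta^n$ with constant top map and then glue the fiberwise homotopy on as a collar to get a based representative, whereas the paper first lifts a horn via $\{0\}\to\Delta^n$ and then weakly fills the remaining face). But two of your stated steps would fail as written. First, the preliminary factorization of $X'\to X\to Y$ through a genuine Kan fibration $\bar p:\bar X\to Y$ buys nothing: by two-out-of-three, showing $\bar p$ is a weak equivalence is literally the same problem as showing $p$ is one, and the weak lifting properties of $p$ do not transfer to $\bar p$; you abandon this reduction immediately, so it is harmless, but it should be deleted. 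Second, and more seriously, your proposed mechanism for closing the argument---\emph{rectifying} the fiberwise homotopy rel $Y$ to an honest based (null)homotopy in $X$ by ``transporting into the Kan world'' of $X'$---is not available: $X$ is not Kan, the weak equivalence points from $X'$ to $X$, and no such straightening of a weak lift into a strict one is possible in general. The correct observation (implicit in the paper) is that no rectification is needed: after geometric realization the fiberwise homotopy is just a free homotopy lying over a constant homotopy in $Y$; for surjectivity you glue it onto $h$ as a collar, and for injectivity it exhibits your representative as freely homotopic to a map that extends over the disk, which already forces the class to vanish at any basepoint.

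Two smaller points. In the injectivity step, ``the union of the cylinder ends and one side'' is not a trivial cofibration into $\Delta^{n+1}$ (it is not even naturally a subcomplex of $\Delta^{n+1}$), so \cref{D:weak kan} does not apply to it; the correct input is the inclusion $\partial\Delta^{n+1}\hookrightarrow\Delta^{n+1}$ (or $\partial\Delta^{n}\hookrightarrow\Delta^{n}$ in the paper's indexing) together with the \emph{trivial} part of the weak Kan fibration hypothesis, the top map being your representative on one face and constants on the others, and the bottom map the simplicial nullhomotopy supplied by Kan-ness of $Y$. Finally, the actual role of $X'$ is not to straighten homotopies (for surjectivity it is not used at all): it is to guarantee that an arbitrary class in $\pi_n(|X|,x)$, for $x$ in the image of $X'$, is represented by a simplicial map into $X$---without this the injectivity argument cannot even begin, since $X$ itself need not be Kan.
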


\begin{proof} 
 First we prove that $\pi_n(p)$ is injective. Let $x$ be a base point 
 that is in the image of $X'$, and let $y=p(x)$. The fact that  $X'$ is Kan guarantees
 that any class in $\pi_n(X,x)$ is represented by a pointed map
 $f : \partial\Delta^n \to X$. If the image of this class in $\pi_n(Y,y)$ is trivial, we will
 have, since $Y$ is Kan,  a filling  $g$ for $p\circ f$, as in the diagram
    \[
\begin{tikzpicture}[scale=1]
	\node (a) at (0,2) {$\partial\Delta^n$};
	\node (b) at (0,0) {$\Delta^n$};
	\node (c) at (2,2) {$X$};
	\node (d) at (2,0) {$Y$};
	
	\draw[->] (a) to node[left] {$i$} (b);
	\draw[->] (a) to node[above] {$f$} (c);
	\draw[->] (b) to node[below] {$g$} (d);
	\draw[->] (c) to node[right] {$p$} (d);
	
	\draw[->,dashed] (b) to node[above left] {$h$} (c);
\end{tikzpicture}
\]
So, a lift $h$ exists which makes the diagram commutative (possibly after
replacing $f$ by a fiberwise homotopic map). This implies that the class represented
by $f$ in $\pi_n(X,x)$  is trivial.

To prove surjectivity of $\pi_n(p)$, let $g : \partial\Delta^n \to Y$ represent an
arbitrary class in  $\pi_n(Y,y)$. To lift this to $X$, we begin
by lifting 
$g|_{\Lambda_0^n} : \Lambda_0^n \to Y$ to $X$. To do so, first extend 
$g|_{\Lambda_0^n}$ to the whole $\Delta^n$ using the Kan property of $Y$.
Then, apply the weak lifting property to the
trivial cofibration $\{0\} \to \Delta^n$. Restricting the outcome to
$\Lambda_0^n$, we find a lift
$\hat{g} : \Lambda_0^n \to X$, sending $0$ to a point that is fiberwise homotopic to $x$.
Since $\pi_n(X,x)$ and $\pi_n(X,\hat{g}(0))$ have the same image in 
$\pi_n(Y,y)$, as can be seen by 
passing to the geometric realization, there is no harm in replacing $x$ with $\hat{g}(0)$.
So we may assume that $\hat{g}(0)=x$. Now consider the following lifting problem
      \[
\begin{tikzpicture}[scale=1]
	\node (a) at (0,2) {$\partial\Delta^{n-1}$};
	\node (b) at (0,0) {$\Delta^{n-1}$};
	\node (c) at (3,2) {$X$};
	\node (d) at (3,0) {$Y$};
	
	\draw[->] (a) to node[left] {$j$} (b);
	\draw[->] (a) to node[above] {$\hat{g}\circ (d_0|_{\partial\Delta^{n-1}})$} (c);
	\draw[->] (b) to node[below] {$g \circ d_0$} (d);
	\draw[->] (c) to node[right] {$p$} (d);
	
	\draw[->,dashed] (b) to node[above left] {$h$} (c);
\end{tikzpicture}
\]
Here, $d_0 : \Delta^{n-1} \to \partial\Delta^{n}$ is the $0^{th}$ face of
$\partial\Delta^{n}$ and $j : \partial\Delta^{n-1} \to \Delta^{n-1}$ is the inclusion map.
A weak solution to this problem can be glued to $\hat{g}$ to give a map
\[ G : \Lambda_0^n \stackvee_{\partial\Delta^{n-1}}  (\partial\Delta^{n-1}\times \Delta^1) 
\stackvee_{\partial\Delta^{n-1}} \Delta^{n-1}  \longrightarrow X\]
making the following diagram commutative
        \[
\begin{tikzpicture}[scale=1]
	\node (a) at (0,2) 
	   {$\Lambda_0^n \stackvee_{\partial\Delta^{n-1}}  (\partial\Delta^{n-1}\times \Delta^1)
       \stackvee_{\partial\Delta^{n-1}} \Delta^{n-1} $};
	\node (b) at (0,0) {$\partial\Delta^{n}$};
	\node (c) at (5,2) {$X$};
	\node (d) at (5,0) {$Y$};
	
	\draw[->] (a) to node[left] {$P$} (b);
	\draw[->] (a) to node[above] {$G$} (c);
	\draw[->] (b) to node[below] {$g$} (d);
	\draw[->] (c) to node[right] {$p$} (d);
	
\end{tikzpicture}
\]
Here, 
 \[P: \Lambda_0^n \stackvee_{\partial\Delta^{n-1}}  (\partial\Delta^{n-1}\times \Delta^1)
       \stackvee_{\partial\Delta^{n-1}} \Delta^{n-1} \to \partial\Delta^{n}\] 
is the map that collapses $\partial\Delta^{n-1}\times \Delta^1$
to $\partial\Delta^{n-1}$ via  the first projection; note that 
\[|\Lambda_0^n \stackvee_{\partial\Delta^{n-1}}  (\partial\Delta^{n-1}\times \Delta^1)
       \stackvee_{\partial\Delta^{n-1}} \Delta^{n-1}|\]
        is homeomorphic to an $n$-sphere. 
The (geometric realization of the) map $G$ represents a lift of the class in $\pi_n(Y,y)$
represented by $g$  to a class in class in $\pi_n(X,x)$.
This completes the proof of surjectivity.
\end{proof}

\begin{lemma}
\label{L:ob and mor kan}
 Let $p : \stack{X}\to \stack{Y}$  be a morphism of Serre topological stacks
 that is a (weak) (trivial) Serre fibration and a Reedy fibration. 
 Let $R_0(\stack{X})=\ob(\stack{X}\Del)$,
  $R_1(\stack{X})=\mor(\stack{X}\Del)$ and
 \[R_m(\stack{X})=
   R_1(\stack{X})\times_{R_0(\stack{X})}\times\cdots\times_{R_0(\stack{X})}R_1(\stack{X}).
   \]  
 Then, for every $m\geq 0$, the induced map
 \[R_m(\stack{X}) \to R_m(\stack{Y})\]
is a (weak) (trivial) Kan fibration of simplicial sets.
\end{lemma}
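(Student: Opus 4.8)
The strategy is to re-express a lifting problem for $R_m(p)$ as one of the lifting problems already resolved by \cref{C:strict_lift2} and \cref{C:weak_lifting2}. The key point is the following reinterpretation: for any simplicial set $K$, a map of simplicial sets $K\to R_m(\stack{X})$ is the same datum as an $m$-tuple of composable morphisms in the groupoid
\[\Gamma_{\stack{X}}(K):=\Hom_{\sgpd}(K,\stack{X}\Del)\ \cong\ \Hom_{\presheafgpd}(\tilde K,\stack{X}),\]
the displayed isomorphism being \cref{L:restriction isom}. Indeed, by definition $R_m(\stack{X})_j$ is the set of $m$-strings of composable morphisms in $\stack{X}(|\Delta^j|)=(\stack{X}\Del)_j$, whereas the objects of $\Gamma_{\stack{X}}(K)$ are the maps $K\to\ob(\stack{X}\Del)$ and its morphisms are the maps $K\to\mor(\stack{X}\Del)$ lying over a pair of objects; hence $\mor^{(m)}\bigl(\Gamma_{\stack{X}}(K)\bigr)$ is exactly the set of maps $K\to R_m(\stack{X})$, naturally in $K$ (contravariantly) and in $\stack{X}$. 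I would also record the elementary remark that a fibration of groupoids in the sense of \cref{D:gpd_fib} that is surjective on objects is automatically surjective on $m$-tuples of composable morphisms: lift the initial vertex of a given string using surjectivity on objects, then lift the successive arrows one at a time using the isofibration property, which for groupoids lifts morphisms out of a prescribed object as well as into it. Thus the parenthetical clause of \cref{C:weak_lifting2} applies equally to \cref{C:strict_lift2}.

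Now fix $m$ and treat first the Kan and trivial Kan fibration cases. Assume $p$ is a Serre fibration and a Reedy fibration; in the trivial case $p$ is moreover a trivial Serre fibration, hence also a weak equivalence by \cref{P:trivserre}. Let $i:A\hookrightarrow B$ be the horn inclusion $\Lambda^n_k\hookrightarrow\Delta^n$, and, in the trivial case, also $\partial\Delta^n\hookrightarrow\Delta^n$. These are monomorphisms of finite non-singular simplicial sets, with $i$ a weak equivalence in the non-trivial case, so \cref{C:strict_lift2} applies and tells us that
\[\Gamma_{\stack{X}}(B)\longrightarrow\Gamma_{\stack{X}}(A)\times_{\Gamma_{\stack{Y}}(A)}\Gamma_{\stack{Y}}(B)\]
is a fibration of groupoids, surjective on objects and therefore, by the remark above, surjective on $m$-tuples of composable morphisms. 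Translating this through the identification of the previous paragraph: a strictly commutative square from $i$ to $R_m(p):R_m(\stack{X})\to R_m(\stack{Y})$ is precisely such an $m$-tuple in the target groupoid, and an $m$-tuple in the source groupoid lifting it is precisely a strict diagonal filler. Hence $R_m(p)$ has the strict right lifting property against all horn inclusions and, in the trivial case, against all $\partial\Delta^n\hookrightarrow\Delta^n$; that is, it is a Kan fibration, resp.\ a trivial Kan fibration.

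For the weak cases, assume $p$ is a weak Serre fibration and a Reedy fibration; in the weak trivial case $p$ is moreover a trivial weak Serre fibration, hence also a weak equivalence by \cref{P:trivserre}. Let $i:A\hookrightarrow\Delta^n$ be a trivial cofibration, and, in the weak trivial case, also $\partial\Delta^n\hookrightarrow\Delta^n$; here $A$ is a subobject of $\Delta^n$, hence finite and non-singular, and $i$ is a weak equivalence in the weak Kan case, so \cref{C:weak_lifting2} applies. Let $L$ be the groupoid appearing there, with its map $\Psi:L\to\Gamma_{\stack{X}}(A)\times_{\Gamma_{\stack{Y}}(A)}\Gamma_{\stack{Y}}(\Delta^n)$, a fibration of groupoids surjective on objects and hence on $m$-tuples of composable morphisms. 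The substantive step is to unwind an $m$-tuple of composable morphisms in $L$: using the isomorphism
\[L\ \cong\ \Hom_{\sgpd}(A\times\Delta^1,\stack{X}\Del)_p\times_{\Hom_{\sgpd}(A,\stack{X}\Del)}\Hom_{\sgpd}(\Delta^n,\stack{X}\Del)\]
recorded before \cref{C:weak_lifting2}, together with the identification of the first paragraph, such an $m$-tuple amounts to a pair $(h,H)$ in which $h:\Delta^n\to R_m(\stack{X})$ is a map and $H:A\times\Delta^1\to R_m(\stack{X})$ is a fiberwise homotopy relative to $R_m(\stack{Y})$, meaning that $R_m(p)\circ H$ factors through the projection $A\times\Delta^1\to A$, with $H_1=h\circ i$; and $\Psi$ carries it to $(H_0,\,R_m(p)\circ h)$. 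Surjectivity of $\Psi$ on such tuples is then exactly the weak lifting property of \cref{D:weak kan} for $R_m(p)$ against $i$, so $R_m(p)$ is a weak Kan fibration, resp.\ a weak trivial Kan fibration.

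The step I expect to be the main obstacle is this last unwinding: one must carefully match the iterated fiber products of $\Hom$-groupoids hidden inside $L$ with the simplicial-set data of a weak solution, in particular checking that the ``fiberwise relative to $\stack{Y}$'' condition (the subscript $p$) becomes ``$R_m(p)\circ H$ is the constant homotopy'', that the time-$1$ restriction of $H$ is forced to equal the restriction of $h$ along $i$, and that $\Psi$ reads off the pair $(H_0,R_m(p)\circ h)$. Once the identification of the first paragraph is set up, everything else is formal; in particular the case $m=0$, where $R_0(\stack{X})=\ob(\stack{X}\Del)$, is just the special instance of this argument.
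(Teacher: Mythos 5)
Your proof is correct and follows essentially the same route as the paper's: the paper applies the nerve functor to the groupoid fibrations of \cref{C:strict_lift2} and \cref{C:weak_lifting2} and identifies the $m$-simplices of $N\Hom_{\sgpd}(B,\stack{X}\Del)$ with maps $B\to R_m(\stack{X})$, which is exactly your identification of such maps with $m$-tuples of composable morphisms. Your explicit remark that surjectivity on objects plus the isofibration property yields surjectivity on composable $m$-tuples is precisely what justifies the parenthetical clauses in those corollaries, so nothing is missing.
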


\begin{proof}
First, we prove the statement in the case of  a Serre fibration.
Let
$A=\Lambda^n_k$ and $B=\Delta^n$, and 
let  $i : A \to B$ be  the horn inclusion.
By \cref{C:strict_lift2}, we have a fibration of groupoids
	\begin{align*}
		\Hom_{\sgpd}(B,\stack{X}\Del) \to
		 \Hom_{\sgpd}(A,\stack{X}\Del)
		 \times_{\Hom_{\sgpd}(A,\stack{Y}\Del)}
		  \Hom_{\sgpd}(B,\stack{Y}\Del)
	\end{align*}
which is surjective on objects. Taking nerves on both sides,
we find a fibration of simplicial sets 
   	\begin{align*}
		N\Hom_{\sgpd}(B,\stack{X}\Del) \to
		 N\Hom_{\sgpd}(A,\stack{X}\Del)
		 \times_{N\Hom_{\sgpd}(A,\stack{Y}\Del)}
		  N\Hom_{\sgpd}(B,\stack{Y}\Del)
	\end{align*}
which is  surjective on $m$-simplices, for all $m$.	
The surjectivity on $m$-simplices precisely
translates to the fact that $i$ has LLP with respect to $R_m(\stack{X}) \to R_m(\stack{Y})$,
as the above map on the level on $m$-simplices is, term by term, equal to the map
 	\begin{align*}
		\Hom_{\sset}(B,R_m(\stack{X})) \to
		 \Hom_{\sset}(A,R_m(\stack{X}))
		 \times_{\Hom_{\sset}(A,R_m(\stack{Y}))}
		  \Hom_{\sset}(B,R_m(\stack{Y})).
	\end{align*}
This shows that $R_m(\stack{X}) \to R_m(\stack{Y})$
is a Kan fibration.
The case of a trivial Serre fibration is proved similarly (taking $A=\partial\Delta^n$
instead of $\Lambda^n_k$).

Now consider the case where $p$ is a weak  Serre fibration. 
Let $B=\Delta^n$ and  $i : A \to B$ be  
as in  \cref{D:weak kan}. By
\cref{C:weak_lifting2}, we have a fibration of simplicial sets 
\begin{align}\label{Eq:surjective}
		NL  & \to
		 N\Hom_{\sgpd}(A,\stack{X}\Del)
		 \times_{N\Hom_{\sgpd}(A,\stack{Y}\Del)}
		  N\Hom_{\sgpd}(B,\stack{Y}\Del)
\end{align} 
which is  surjective on $m$-simplices, for all $m$.	
By the discussion   just before
Lemma \ref{C:weak_lifting2}, and the fact that taking nerves
commutes with fiber products, $NL$ is isomorphic
to 
\begin{align*} 
& N\Hom_{\sgpd}(A\times\Delta^1,\stack{X}\Del)
    \times_{N\Hom_{\sgpd}(A\times\Delta^1,\stack{Y}\Del)}  
  N\Hom_{\sgpd}(A,\stack{Y}\Del) \\
      &   \ \ \ \ \ \ \ \ \ \ \ \ \ \ \ \ \ \ \ \ \ \ \ \ 
      \times_{N\Hom_{\sgpd}(A, \stack{X}\Del)}
    N\Hom_{\sgpd}(B, \stack{X}\Del). 
\end{align*}
Its set of $m$-simplices  is then equal to
\begin{align*} 
(NL)_m = &
 \Hom_{\sset}(A\times\Delta^1,R_m(\stack{X}))
    \times_{\Hom_{\set}(A\times\Delta^1,R_m(\stack{Y}))}  
  \Hom_{\sset}(A,R_m(\stack{Y})) \\
     &   \ \ \ \ \ \ \ \ \ \ \ \  \ \ \ \ \ \ \ \ \ \ \ \ 
      \times_{\Hom_{\sset}(A, R_m(\stack{X}))}
    \Hom_{\sset}(B, R_m(\stack{X}))   \\
    \cong &
     \Hom_{\sset}(A\times\Delta^1,R_m(\stack{X}))_p  \times_{\Hom_{\sset}(A, R_m(\stack{X}))}
    \Hom_{\sset}(B, R_m(\stack{X})),
\end{align*} 
where 
\[
\Hom_{\sset}(A\times\Delta^1,R_m(\stack{X}))_p:=
\Hom_{\sset}(A\times\Delta^1,R_m(\stack{X}))
    \times_{\Hom_{\set}(A\times\Delta^1,R_m(\stack{Y}))}  
  \Hom_{\sset}(A,R_m(\stack{Y}))
\]
is the set of fiberwise homotopies. Thus, we can think of $(NL)_m$ as the set 
of pairs $(H,h)$, where $h : B \to R_m(\stack{X})$
is a map of simplicial sets and $H : A\times\Delta^1 \to  R_m(\stack{X})$
is a  fiberwise homotopy relative to $R_m(\stack{Y})$  
such that $H_1=h\circ i$.

Hence, on the level of $m$-simplices, the map \ref{Eq:surjective} above
can be identified with the natural map
  	\begin{align*}
		\Hom_{\sset}(A\times\Delta^1,R_m(\stack{X}))_p  
		 \times_{\Hom_{\sset}(A, R_m(\stack{X}))}
          \Hom_{\sset}(B, R_m(\stack{X})) \\
		\ \ \ \ \ \ \ \ \to
		 \Hom_{\sset}(A,R_m(\stack{X}))
		 \times_{\Hom_{\sset}(A,R_m(\stack{Y}))}
		  \Hom_{\sset}(B,R_m(\stack{Y})).
	\end{align*}
which assigns to any weak solution $(H,h)$, viewed as an element in the left hand side,
its associated 	 lifting problem $(f,g)$, viewed as an element in the  right hand side, 
as in the diagram  
	\[
\begin{tikzpicture}[scale=1]

	\node (a) at (0,2) {$A$};
	\node (b) at (0,0) {$B$};
	\node (c) at (2,2) {$R_m(\stack{X})$};
	\node (d) at (2,0) {$R_m(\stack{Y})$};

	\draw[->] (a) to node[left] {$i$} (b);
	\draw[->] (a) to node[above] {$f$} (c);
	\draw[->] (b) to node[below] {$g$} (d);
	\draw[->] (c) to node[right] {$p$} (d);
	
	\draw[dashed,->] (b) to node[above left,inner sep=1pt] {$h$} (c);
	
	\node at (0.45,1.55) {$H$};
\end{tikzpicture}
\]
The surjectivity of this
map precisely means that any such lifting problem has a weak solution.

The case of a weak trivial Serre fibration is proved similarly.
\end{proof}

\subsection{A lemma on $d^*: \sset \to \bsset$}
\label{SS:d*}
In this section  we prove a lemma which is used in the proof of \cref{L:jardine_lemma},
which in turn plays an important role in the proof of our first main result, 
\cref{T:sing of WSR fib}.

First, we briefly recall notion of exterior product of simplicial sets.
Given simplicial sets $X$ and $Y$, their  \emph{exterior product}  is
the bisimplicial set $X\boxtimes Y$ defined by
  \[
   (X\boxtimes Y)_{m,n}:=X_m\times Y_n.
 \]
We have $\Diag(X\boxtimes Y)=X\times Y$. The exterior product has the property that
 the functor
 \begin{align*}
  \sset & \to \bsset, \\
  A & \mapsto A\boxtimes \Delta^n,
 \end{align*}
 is left adjoint to
 \begin{align*}
    \bsset & \to \sset, \\
    X & \mapsto X_{*,n}.
 \end{align*}

Let $A \to B$ be  a map of simplicial sets. Recall  the  
functor $d^*: \sset \to \bsset$ from \cref{D:diag_adjoint}. We have a natural map 
    \[d^*(A) \to A\boxtimes B,\]
namely, the  adjoint (see \cref{P:Adjunction}) to the diagonal inclusion
  \[ A \to \Diag(A\boxtimes B)=A\times B.
  \]
In the next lemma we show that for any monomorphism $A \to \Delta^n$, the
map 
$d^*(A) \to A\boxtimes \Delta^n$ is a trivial cofibration. 
The case $A=\Lambda^n_k$ of the following lemma is proved in \cite{Jardine}
(see \cite{Jardine}, top of the page 221, just before Lemma  3.12).
 
\begin{lemma}
\label{L:d^*} 
Let $\gamma : A \to \Delta^n$ be a cofibration (not necessarily trivial)
of simplicial sets.
Then,  for every $m$, we have
\[(d^*A)_{m,*}=\coprod_{\alpha \in A_m} C_{\alpha}, \]
where $C_{\alpha}\subseteq A$ is the union of all
faces of $A$ that contain $\alpha$.
The natural map  of bisimplicial sets
    \[i : d^*(A) \to A\boxtimes\Delta^{n},\]
namely, the left adjoint to the diagonal inclusion
  \[ (\id,\gamma) : A \to \Diag(A\boxtimes\Delta^n)=A\times\Delta^n,
  \]
is given on the $m^{th}$ column by the inclusion
 \[i_m : \coprod_{\alpha \in A_m} C_{\alpha} \hookrightarrow 
 \coprod_{\alpha \in A_m} \Delta^n.\]
In particular, $i_m$ is a trivial cofibration of simplicial sets for every
$m$ (thus, $i$ is a vertical pointwise trivial cofibration of bisimplicial sets).
\end{lemma}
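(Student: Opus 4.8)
The plan is to compute $d^*A$ as a colimit of representables and read off its columns. Since $d^*$ preserves colimits and $d^*(\Delta^k)=\Delta^{k,k}=\Delta^k\boxtimes\Delta^k$, writing $A=\colim_{([k],x)\in\Deltacat/A}\Delta^k$ as the canonical colimit over its category of simplices gives $d^*A=\colim_{([k],x)}\bigl(\Delta^k\boxtimes\Delta^k\bigr)$. Passing to the $m$-th column via the functor $X\mapsto X_{m,*}$, which preserves all colimits (they are computed pointwise), and using $(\Delta^k\boxtimes\Delta^k)_{m,*}=(\Delta^k)_m\times\Delta^k=\coprod_{\sigma\in(\Delta^k)_m}\Delta^k$, yields $(d^*A)_{m,*}=\colim_{([k],x)}\coprod_{\sigma\in(\Delta^k)_m}\Delta^k$. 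Since $\gamma$ is a monomorphism, $A$ is a subcomplex of $\Delta^n$, and I will freely identify simplices of $A$ with maps into $[n]$.

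I would then reindex this colimit by a Fubini identity: a colimit of the form $\colim_{d}\coprod_{s\in G(d)}K(d)$ equals $\colim_{(d,s)\in\int G}K(d)$, the colimit over the category of elements of $G\colon\Deltacat/A\to\set$, $([k],x)\mapsto(\Delta^k)_m$. The rule $(([k],x),\sigma)\mapsto\sigma^*x\in A_m$ defines a functor from $\int G$ to the discrete set $A_m$, so $\int G=\coprod_{\alpha\in A_m}\mathcal E_\alpha$ and hence $(d^*A)_{m,*}=\coprod_{\alpha\in A_m}\bigl(\colim_{\mathcal E_\alpha}\Delta^k\bigr)$. To identify the $\alpha$-summand with $C_\alpha$, I would show that the full subcategory $\mathcal E_\alpha^{\mathrm{nd}}\subseteq\mathcal E_\alpha$ of objects $(([k],x),\sigma)$ with $x$ non-degenerate is final: for an arbitrary object, the epi--mono factorization $[k]\twoheadrightarrow[\ell]\hookrightarrow[n]$ of the map underlying $x$ produces, using uniqueness of such factorizations, a terminal object of the relevant comma category. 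Moreover $\mathcal E_\alpha^{\mathrm{nd}}$ is isomorphic, as a poset, to $\mathcal S_\alpha:=\{\,S\subseteq[n]\mid \operatorname{im}(\alpha)\subseteq S,\ \Delta^S\subseteq A\,\}$ ordered by inclusion. Since $\Delta^{S_1}\cap\Delta^{S_2}=\Delta^{S_1\cap S_2}$, the poset $\mathcal S_\alpha$ is closed under intersection, so the colimit of $S\mapsto\Delta^S$ over it is just the union $\bigcup_{S\in\mathcal S_\alpha}\Delta^S=C_\alpha$. Chasing the adjoint transpose $(\mathrm{id},\gamma)\colon A\to\Diag(A\boxtimes\Delta^n)$ defining $i$ through the same colimit — on the summand $\Delta^k$ indexed by $(([k],x),\sigma)$ it is $\gamma\bar{x}\colon\Delta^k\to\Delta^n$, landing in the $\sigma^*x$-component — identifies $i_m$ with the evident inclusion $\coprod_{\alpha\in A_m}C_\alpha\hookrightarrow\coprod_{\alpha\in A_m}\Delta^n$.

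Finally, each inclusion $C_\alpha\hookrightarrow\Delta^n$ is a trivial cofibration. It is a monomorphism (an inclusion of subcomplexes of $\Delta^n$), so it suffices that $C_\alpha$ be weakly contractible. Every maximal simplex of $C_\alpha$ is a maximal element of $\mathcal S_\alpha$, hence contains $\operatorname{im}(\alpha)$ and so contains the vertex $v:=\min\operatorname{im}(\alpha)$; therefore $C_\alpha$ is closed under adjoining $v$ to any of its simplices, i.e.\ it is a cone with apex $v$, whence $|C_\alpha|$ is contractible and $C_\alpha\hookrightarrow\Delta^n$ is a weak equivalence. A coproduct of trivial cofibrations of simplicial sets is a trivial cofibration, so $i_m$ is one for every $m$; this is precisely the assertion that $i$ is a vertical pointwise trivial cofibration of bisimplicial sets.

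The step I expect to be the main obstacle is the identification of the $\alpha$-summand $\colim_{\mathcal E_\alpha}\Delta^k$ with $C_\alpha$: keeping the reindexing categories straight, proving finality of the non-degenerate subcategory cleanly, and correctly invoking the principle that the colimit of an intersection-closed family of subobjects is their union. Once that is in place, the contractibility of $C_\alpha$ via the cone structure and the coproduct step are routine, and matching $i_m$ with the stated inclusion is a formal diagram chase.
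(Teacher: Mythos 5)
Your argument is correct, but it reaches the column formula by a genuinely different route than the paper. The paper first treats the case where $A$ is a single face (so $d^*(A)=A\boxtimes A$ sits inside $\Delta^n\boxtimes\Delta^n$ in the evident way), records as its key observation that these embeddings respect intersections of faces, and then writes a general $A$ as the coequalizer of the inclusions of its faces; since $d^*$ preserves colimits, the intersection compatibility shows the induced map $d^*(A)\to\Delta^n\boxtimes\Delta^n$ is injective with $m$-th column image $\coprod_\alpha C_\alpha$. You instead expand $A$ over its full category of simplices $\Deltacat/A$, push the colimit into the $m$-th column, regroup via the Fubini identity over the category of elements, and identify each $\alpha$-block by a cofinality reduction to the poset $\mathcal{S}_\alpha$ of faces containing $\alpha$. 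Your observation that $\mathcal{S}_\alpha$ is closed under intersection (so that the colimit of the subobjects $\Delta^S$ is their union) is exactly the paper's ``key observation'' in disguise, so the mathematical content coincides; your version trades the paper's injectivity-of-the-coequalizer argument for bookkeeping with comma categories, and buys a cleaner separation of the combinatorics from the colimit formalism. Two small remarks: in the finality step the epi--mono factorization object is \emph{initial} (not terminal) in the under-category $e\downarrow\mathcal{E}_\alpha^{\mathrm{nd}}$ --- either suffices, since all you need is that these comma categories are nonempty and connected; and your explicit verification that each $C_\alpha\hookrightarrow\Delta^n$ is a trivial cofibration (every face in $\mathcal{S}_\alpha$ contains a fixed vertex of $\alpha$, so $C_\alpha$ is a cone and hence contractible) is a welcome addition, as the paper leaves that final assertion unjustified.
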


In the above lemma, by a {\em face} of $A$ we mean the sub simplicial set generated by a
 (non-degenerate) simplex in $A$. Note that such a face is isomorphic to some
 simplex $\Delta^m$ and that $A \subseteq \Delta^n$ is necessarily a union of
 a collection of faces in $\Delta^n$.

\begin{proof}
 First we consider the case where $\gamma$ is the inclusion of a face
 (so $A=\Delta^d$ for some $d$). In this case,
  $d^*(A)=A\boxtimes A$, which can be identified with a 
  sub bisimplicial set of 
 $A\boxtimes\Delta^n$ via the map whose effect on the $m^{th}$ column 
 is given by
     \[i^A_m : \coprod_{\alpha \in A_m} A \xhookrightarrow{\gamma} 
 \coprod_{\alpha \in A_m} \Delta^n \ (\subseteq \coprod_{\alpha \in \Delta^n_m} \Delta^n ).\]
 The key observation here is that, for any two faces $F_{i}$ and $F_{j}$ of $A$,
 the
 image of $i_m^{F_{i}\cap F_{j}}$ in the $m^{\text{th}}$ column 
 $\coprod_{\alpha \in \Delta^n_m} \Delta^n$ of  $\Delta^n\boxtimes\Delta^n$
 is equal to the intersection of the images of $i_m^{F_j}$ and $i_m^{F_k}$.
 
 Now, for general $A$, write is as a coequalizer of the inclusions of its faces, namely
  \[A =\operatorname{coeq}\left(\coprod_{j,k}F_j\cap F_k 
     \rightrightarrows \coprod_{j}F_{j}\right).\]
 Since $d^*$ commutes with colimits, we have    
  \[d^*(A) =\operatorname{coeq}\left(\coprod_{j,k}d^*(F_j\cap F_k) 
     \rightrightarrows \coprod_{j}d^*(F_{j})\right).\]
 The  observation above that  $i_m$
 respects intersections implies that 
 $i^A_m : d^*(A) \to \Delta^n\boxtimes\Delta^n$ is injective and 
 the image under $i^A_m$ of $d^*(A)$ in the $m^{\text{th}}$ column 
 $\coprod_{\alpha \in \Delta^n_m} \Delta^n$ of  $\Delta^n\boxtimes\Delta^n$ is
 the union of images of all  $i^{F_j}_m$. This is
 precisely $\coprod_{\alpha \in A_m} C_{\alpha}$.
\end{proof}

\subsection{A criterion for diagonal fibrations}
\label{SS:diagonal}

To prove our first main result we need a generalization of  Lemma 4.8  
of (\cite{Jardine}, Chapter IV) which we now prove.

%
%
%

In the next lemma,
we are regarding $X$ as the simplicial object $[m] \mapsto X_{m,*}$ in $\sset$.

\begin{lemma}
\label{L:jardine_lemma}
Let  $f : X \to Y$ be a Reedy fibration of bisimplicial sets.
Let $\gamma : A \to \Delta^n$ be a monomorphism.
Suppose  that $\gamma$ has (weak) left lifting property 
with respect to $f_{*,n} : X_{*,n} \to Y_{*,n}$, for all $n$ (\cref{D:weak kan}). 
Then, $\gamma$ has (weak) left lifting property  with respect to 
$\Diag(f) : \Diag(X) \to \Diag(Y)$. In particular,
if each $f_{*,n} : X_{*,n} \to Y_{*,n}$ is a (weak) (trivial) Kan fibration, 
then so is $\Diag(f)$.
\end{lemma}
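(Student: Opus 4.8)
The plan is to transpose the lifting problem against $\Diag(f)$ across the adjunction $d^{*}\dashv\Diag$ of \cref{P:Adjunction}, and then to solve the resulting problem against $f$ by splitting it into two halves — one controlled by the Reedy condition on $f$, the other by the hypothesis on $f_{*,n}$.

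First I would set up the transposition. A (weak) lifting problem given by $a\colon A\to\Diag(X)$ and $b\colon\Delta^{n}\to\Diag(Y)$ corresponds, under $d^{*}\dashv\Diag$, to maps $\hat a\colon d^{*}(A)\to X$ and $\hat b\colon d^{*}(\Delta^{n})=\Delta^{n,n}\to Y$ with $f\circ\hat a=\hat b\circ d^{*}(\gamma)$. Conversely, a (weak) solution of this transposed problem — a map $\hat\ell\colon\Delta^{n,n}\to X$ with $f\circ\hat\ell=\hat b$ together with a fiberwise homotopy over $Y$ from $\hat a$ to $\hat\ell\circ d^{*}(\gamma)$ — transposes back to a (weak) solution of the original problem: one transposes $\hat\ell$ across $d^{*}\dashv\Diag$ and applies $\Diag$ to the homotopy, precomposing with the adjunction unit $A\to\Diag(d^{*}(A))$. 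Since $\Diag\circ d^{*}$ is not the identity functor, this last step is a genuine, if routine, diagram chase using naturality of the unit.

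Next I would solve the transposed problem. The key is the factorization, using \cref{L:d^*},
\[
d^{*}(\gamma)\ =\ \big(\gamma\boxtimes\id_{\Delta^{n}}\big)\circ j\colon\quad d^{*}(A)\xrightarrow{\ j\ }A\boxtimes\Delta^{n}\xrightarrow{\ \gamma\boxtimes\id\ }\Delta^{n}\boxtimes\Delta^{n}=\Delta^{n,n},
\]
where $j$ is the canonical map of \cref{L:d^*}; that this factorization holds is a short transpose computation. By \cref{L:d^*}, each column of $j$ is a coproduct of the trivial cofibrations $C_{\alpha}\hookrightarrow\Delta^{n}$, so $j$ is a pointwise monomorphism and a pointwise weak equivalence, hence a trivial cofibration in the Reedy (equivalently injective) model structure on $\bsset=[\Deltacat\op,\sset]$. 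As $f$ is a Reedy fibration, $\hat a$ therefore extends \emph{strictly} along $j$ to a map $\bar a\colon A\boxtimes\Delta^{n}\to X$ with $f\circ\bar a=\hat b\circ(\gamma\boxtimes\id)$; this is the only use of the Reedy hypothesis on $f$. It then remains to extend $\bar a$ along $\gamma\boxtimes\id_{\Delta^{n}}$, over $f$ and $\hat b$. Transposing under the other adjunction, $(-\boxtimes\Delta^{n})\dashv(-)_{*,n}$, this becomes precisely the lifting problem of $\gamma$ against $f_{*,n}\colon X_{*,n}\to Y_{*,n}$ (with data $\bar a^{\vee}\colon A\to X_{*,n}$ and $\hat b^{\vee}\colon\Delta^{n}\to Y_{*,n}$), which by hypothesis has a (weak) solution. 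Transposing that solution back — using the identification $(A\times\Delta^{1})\boxtimes\Delta^{n}\cong(A\boxtimes\Delta^{n})\times\Delta^{1,0}$ to transpose the homotopy — produces $\hat\ell\colon\Delta^{n,n}\to X$ over $\hat b$ together with a fiberwise homotopy over $Y$ from $\bar a$ to $\hat\ell\circ(\gamma\boxtimes\id)$. Precomposing this homotopy with $j$ gives a fiberwise homotopy over $Y$ from $\hat a=\bar a\circ j$ to $\hat\ell\circ d^{*}(\gamma)$, so $\hat\ell$ is the required (weak) solution of the transposed problem, and transposing back settles the first assertion. The ``in particular'' follows by applying this with $\gamma$ ranging over the horn inclusions $\Lambda^{n}_{k}\hookrightarrow\Delta^{n}$ (giving that $\Diag(f)$ is a (weak) Kan fibration) and, for the ``trivial'' case, also over the boundary inclusions $\partial\Delta^{n}\hookrightarrow\Delta^{n}$.

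The one genuinely conceptual step is the factorization $d^{*}(\gamma)=(\gamma\boxtimes\id)\circ j$, which cleanly separates the ``Reedy part'' (the trivial cofibration $j$, handled by $f$ being a Reedy fibration) from the ``column part'' ($\gamma\boxtimes\id$, handled by $\gamma$ lifting against $f_{*,n}$). I expect the main obstacle to be not a hard idea but the bookkeeping of the weak case: one must track lifts-with-fiberwise-homotopies back and forth across \emph{two} adjunctions and through $\Diag$, being careful with the various products ($(A\times\Delta^{1})\boxtimes\Delta^{n}$ versus $(A\boxtimes\Delta^{n})\times\Delta^{1,0}$, and $\Diag$ of a product) and with the insertion of adjunction units, since $\Diag\circ d^{*}\neq\id$.
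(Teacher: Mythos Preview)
Your proposal is correct and follows essentially the same route as the paper: transpose via $d^{*}\dashv\Diag$, factor $d^{*}(\gamma)$ through $A\boxtimes\Delta^{n}$, handle the first factor with the Reedy hypothesis (using \cref{L:d^*}), and handle the second via $(-\boxtimes\Delta^{n})\dashv(-)_{*,n}$ and the assumption on $f_{*,n}$. The only cosmetic difference is in the weak case bookkeeping: the paper packages the returned homotopy as a map $d^{*}(A\times\Delta^{1})\to X$ (precomposing $H'$ with the canonical map $d^{*}(A\times\Delta^{1})\to(A\times\Delta^{1})\boxtimes\Delta^{n}$), whereas you phrase it via $(A\boxtimes\Delta^{n})\times\Delta^{1,0}$ and then ``precompose with $j$'' --- strictly speaking you precompose with $j\times\id_{\Delta^{1,0}}$ and then pass through $\Diag$ and the unit, which amounts to the same thing.
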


\begin{proof}
We want to show that  $\gamma : A \to \Delta^n$
has (W)LLP with respect to $\Diag(f)$. By adjunction, this is equivalent to showing that
the lifting problem
\[
\begin{tikzpicture}[scale=1,baseline=(current  bounding  box.center)]

	\node (a) at (0,2) {$d^*(A)$};
	\node (b) at (0,0) {$d^*(\Delta^n)$};
	\node (c) at (2,2) {$X$};
	\node (d) at (2,0) {$Y$};
	
	\draw[->] (a) to node[left] {$d^*(\gamma)$} (b);
	\draw[->] (a) to node[above] {$u$} (c);
	\draw[->] (b) to node[below] {$v$} (d);
	\draw[->] (c) to node[right] {$f$} (d);
	
	\draw[->,dashed] (b) to node[above left] {$h$} (c);
	
\end{tikzpicture}\tag{$\ast$}
\]
in bisimplicial sets has a solution, with the caveat that, in the `weak' setting,
instead of a fiberwise homotopy in the upper triangle of ($\ast$) we should be
asking for a map $d^*(A\times\Delta^1) \to X$ (with the obvious properties).

We solve ($\ast$) in two steps, by writing the left vertical map
\[d^*(A) \to d^*(\Delta^n)=\Delta^{n,n}=\Delta^n\boxtimes \Delta^n
\]
as composition of two inclusions
\[
  d^*(A) \xrightarrow{i}  A\boxtimes \Delta^n
  \xrightarrow{j} \Delta^n\boxtimes \Delta^n.
\]
Here, the map $i$ is  adjoint to the diagonal inclusion
  \[ (\id,\gamma) :A \to \Diag(A\boxtimes \Delta^n)=A\times \Delta^n;
  \]
see the paragraph before \cref{L:d^*}.

\medskip
\noindent{\bf Step 1.}
We first solve the lifting problem
\[
\begin{tikzpicture}[scale=1,baseline=(current  bounding  box.center)]

	\node (a) at (0,2) {$d^*(A)$};
	\node (b) at (0,0) {$A\boxtimes \Delta^n$};
	\node (c) at (2,2) {$X$};
	\node (d) at (2,0) {$Y$};
	
	\draw[->] (a) to node[left] {$i$} (b);
	\draw[->] (a) to node[above] {$u$} (c);
	\draw[->] (b) to node[below] {$v\circ j$} (d);
	\draw[->] (c) to node[right] {$f$} (d);
	
	\draw[->,dashed] (b) to node[above left] {$h$} (c);
	
\end{tikzpicture}\tag{$\ast\ast$}
\]
By \cref{L:d^*}, $i$ is a pointwise  
trivial cofibration, so it has strict LLP with respect
to $f$, as $f$ is a Reedy fibration (see \cite{Jardine}, Chapter IV, Lemma  3.3(1)).
Therefore, our lifting problem has indeed a strict solution.

\medskip
\noindent{\bf Step 2.} We now solve the lifting problem
\[
\begin{tikzpicture}[scale=1,baseline=(current  bounding  box.center)]

	\node (a) at (0,2) {$A\boxtimes \Delta^n$};
	\node (b) at (0,0) {$\Delta^n\boxtimes \Delta^n$};
	\node (c) at (2,2) {$X$};
	\node (d) at (2,0) {$Y$};
	
	\draw[->] (a) to node[left] {$j$} (b);
	\draw[->] (a) to node[above] {$h$} (c);
	\draw[->] (b) to node[below] {$v$} (d);
	\draw[->] (c) to node[right] {$f$} (d);
	
	\draw[->,dashed] (b) to node[above left] {$l'$} (c);
	
\end{tikzpicture}\tag{$\ast\ast\ast$}
\]
Consider the adjoint lifting problem
\[
\begin{tikzpicture}[scale=1,baseline=(current  bounding  box.center)]

	\node (a) at (0,2) {$A$};
	\node (b) at (0,0) {$\Delta^n$};
	\node (c) at (2,2) {$X_{*,n}$};
	\node (d) at (2,0) {$Y_{*,n}$};
	
	\draw[->] (a) to node[left] {$\gamma$} (b);
	\draw[->] (a) to node[above] {} (c);
	\draw[->] (b) to node[below] {} (d);
	\draw[->] (c) to node[right] {$f_{*,n}$} (d);
	
	\draw[->,dashed] (b) to node[above left] {$l$} (c);
	
\end{tikzpicture}
\]

If $\gamma$ has strict LLP with respect to
$f_{*,n} : X_{*,n} \to Y_{*,n}$, this problem has a strict solution.
Hence, our original problem  ($\ast$) also has  a strict solution, and we are done.

If $\gamma$ has weak LLP with respect to
$f_{*,n} : X_{*,n} \to Y_{*,n}$, a lift
$l : \Delta^n \to X_{*,n}$ exists, but the upper triangle commutes only
up to a fiberwise homotopy $H : A\times\Delta^1 \to X_{*,n}$
(relative to $Y_{*,n}$). By adjunction, this gives rise to a lift
\[l' : \Delta^n\boxtimes \Delta^n  \to X
\]
in ($\ast\ast\ast$). The upper triangle in ($\ast\ast\ast$), however, is not,
strictly speaking, homotopy commutative. Rather, instead of a homotopy we have a map
$H' : (A\times\Delta^1)\boxtimes \Delta^n \to X$, the adjoint of $H$.
Let $H''$ be the composition
 \[H'' : d^*(A\times\Delta^1) \to (A\times\Delta^1)\boxtimes \Delta^n
  \xrightarrow{H'}  X.
 \]
Here, the first map is  adjoint to
\[(\id,\gamma) \times \id_{\Delta^1} :
  A \times\Delta^1 \to \Diag((A\times\Delta^1)\boxtimes \Delta^n)=
(A\times\Delta^1)\times \Delta^n=A\times \Delta^n\times\Delta^1,
\]
where $(\id,\gamma) : A \to A\times \Delta^n$ is the diagonal inclusion;
see the paragraph before \cref{L:d^*}.
It follows that the pair
  \[l' : \Delta^n \boxtimes \Delta^n  \to X, \ \ H'' : d^*(A\times\Delta^1) \to X
\]
is the desired solution to ($\ast$).
\end{proof}

\subsection{$\sing$ preserves fibrations}
\label{SS:preserves}

We are finally ready to prove one of our main results.

\begin{theorem}
\label{T:sing of WSR fib}
 Let $p : \stack{X}\to \stack{Y}$  be a morphism of Serre topological stacks
 that is a (weak) (trivial) Serre fibration and also a Reedy fibration. Then,
 $\sing(p):\sing(\stack{X}) \to \sing(\stack{Y})$ is a (weak) (trivial) Kan fibration.
\end{theorem}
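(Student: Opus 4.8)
The plan is to realize $\sing(p)$ as the diagonal of a Reedy fibration of bisimplicial sets whose rows are the maps $R_n(p)$ appearing in \cref{L:ob and mor kan}, and then to apply \cref{L:jardine_lemma}. Write $B(p)=N(p\Del)\colon B(\stack{X})\to B(\stack{Y})$ for the induced map of bisimplicial sets, where $B(\stack{X})=N(\stack{X}\Del)$, so that $\sing(p)=\Diag(B(p))$ by \cref{D:sing}. For a groupoid $G$ one has $N(G)_n=\mor(G)\times_{\ob(G)}\cdots\times_{\ob(G)}\mor(G)$ with $n$ copies of $\mor(G)$; since the levelwise nerve and levelwise fiber products are applied objectwise over $\Deltacat$, the row $B(\stack{X})_{*,n}=\bigl([m]\mapsto N(\stack{X}\Del_m)_n\bigr)$ is precisely the simplicial set $R_n(\stack{X})$ of \cref{L:ob and mor kan}. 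Hence $B(p)_{*,n}=R_n(p)\colon R_n(\stack{X})\to R_n(\stack{Y})$, which by \cref{L:ob and mor kan} is a (weak) (trivial) Kan fibration of simplicial sets; this is the step that uses the standing hypotheses on $p$ and, through \cref{S:Lifting}, carries the substantive content of the argument.

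Next I would verify that $B(p)$ is a Reedy fibration of bisimplicial sets, regarding a bisimplicial set $X$ as the simplicial object $[m]\mapsto X_{m,*}$ in $\sset$ as in \cref{L:jardine_lemma}. The nerve $N\colon\gpd\to\sset$ is a right adjoint (\cref{P:Adjunction}), hence preserves all limits, and limits in $\sgpd$ are computed objectwise; consequently $N$ sends the matching object $M_m(\stack{X}\Del)$ of the simplicial groupoid $\stack{X}\Del$ to the $m$-th matching object of $B(\stack{X})$, and it likewise preserves the fiber product $M_m(\stack{X}\Del)\times_{M_m(\stack{Y}\Del)}\stack{Y}\Del_m$. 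Therefore the Reedy map $B(\stack{X})_{m,*}\to M_mB(\stack{X})\times_{M_mB(\stack{Y})}B(\stack{Y})_{m,*}$ is obtained by applying $N$ to the map of groupoids $\stack{X}\Del_m\to M_m(\stack{X}\Del)\times_{M_m(\stack{Y}\Del)}\stack{Y}\Del_m$. By \cref{D:reedy_cond}, the hypothesis that $p$ is a Reedy fibration says exactly that $p\Del$ is a Reedy fibration in $\sgpd$, so this map of groupoids is a fibration of groupoids; since the nerve of a fibration of groupoids is a Kan fibration (as used in the proof of \cref{P:reedy=inj}), the Reedy map is a Kan fibration for every $m$. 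Thus $B(p)$ is a Reedy fibration of bisimplicial sets.

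Finally, \cref{L:jardine_lemma} applies to $f=B(p)$: it is a Reedy fibration of bisimplicial sets, and each $f_{*,n}=R_n(p)$ is a (weak) (trivial) Kan fibration, so $\Diag(B(p))=\sing(p)$ is a (weak) (trivial) Kan fibration. At this point there is essentially no remaining obstacle: the genuine difficulty --- solving lifting problems for finite non-singular simplicial sets against morphisms of stacks that are simultaneously Serre and Reedy fibrations --- has already been dealt with in \cref{S:Lifting} and encapsulated in \cref{L:ob and mor kan}, while the bisimplicial bookkeeping is handled by \cref{L:jardine_lemma}. The only genuinely new points in the present proof are the identification of the rows of $N(\stack{X}\Del)$ with the $R_n(\stack{X})$ and the passage of Reedy fibrancy from $\sgpd$ to $\bsset$ through the levelwise nerve.
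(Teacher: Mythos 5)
Your proof is correct and follows essentially the same route as the paper: identify the rows of $B(p)=N(p\Del)$ with the maps $R_n(p)$ handled by \cref{L:ob and mor kan}, verify that $B(p)$ is a Reedy fibration of bisimplicial sets using that the nerve preserves limits and fibrations of groupoids, and conclude via \cref{L:jardine_lemma}. The only difference is that you spell out the matching-object bookkeeping more explicitly than the paper does.
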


\begin{proof}
   Let  $R_m(\stack{X}):=B(\stack{X})_{*,m}$ be  the
   $m^{\text{th}}$ row of the bisimplicial set 
   $B(\stack{X})$ (where $B(\stack{X})$ is defined in \cref{D:sing}). 
   Note that we have
   \[R_0(\stack{X})=\ob(\stack{X}\Del),
   \ R_1(\stack{X})=\mor(\stack{X}\Del), \
   R_m(\stack{X})=
   R_1(\stack{X})\times_{R_0(\stack{X})}\times\cdots\times_{R_0(\stack{X})}R_1(\stack{X}).
   \]
It follows from \cref{L:ob and mor kan} that, for every $m$, 
$B(p)_{*,m} : B(\stack{X})_{*,m} \to B(\stack{Y})_{*,m}$ is a (weak) (trivial) Kan fibration. 
Furthermore, $B(p)$ is a Reedy fibration of bisimplicial sets because, 
by assumption, $p\Del : \stack{X}\Del\to \stack{Y}\Del$ is  a Reedy fibration 
of simplicial groupoids, and the nerve functor $N : \gpd \to \sset$ preserves 
fibrations and limits (see the proof of Proposition \ref{P:reedy=inj}).  
It follows now from \cref{L:jardine_lemma} that $B(p)$ is a diagonal (weak)  
(trivial) Kan fibration. In other words, $\sing(p):\sing(\stack{X}) \to \sing(\stack{Y})$ 
is a (weak) (trivial) Kan fibration.
\end{proof}

\begin{cor}
 Let $\stack{X}$  be a Reedy fibrant Serre topological stack. Then,
 $\sing(\stack{X})$ is a  Kan simplicial set.
\end{cor}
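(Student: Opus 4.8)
The plan is to apply \cref{T:sing of WSR fib} to the unique morphism $p : \stack{X} \to *$, where $*$ denotes the terminal presheaf of groupoids. Note that $*$ is represented by the one-point space, so it is a (Serre) topological stack, and that $\sing(*) = *$: indeed $*\Del$ is the constant simplicial groupoid with value the terminal groupoid, its levelwise nerve is the constant bisimplicial set $\Delta^0$, and its diagonal is $\Delta^0$. (Alternatively, $\sing$ preserves limits by \cref{P:Adjunction}, and $*$ is the empty limit.) Thus, once we know that $p$ is a Serre fibration and a Reedy fibration, \cref{T:sing of WSR fib} gives that $\sing(p) : \sing(\stack{X}) \to *$ is a Kan fibration, which is exactly the statement that $\sing(\stack{X})$ is a Kan simplicial set.

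It remains to verify the two hypotheses on $p : \stack{X} \to *$. The Reedy condition holds by assumption: by \cref{D:reedy_cond}, saying that $\stack{X}$ is Reedy fibrant means precisely that $p\Del : \stack{X}\Del \to *$ is a Reedy fibration in $\sgpd$. For the Serre condition, I would observe that any morphism whose target is a point is automatically a (weak) Serre fibration. Concretely, fix a finite CW complex $A$ and consider a lifting problem for the inclusion $i : A \to A \times [0,1]$, $a \mapsto (a,0)$, against $p$ (in the sense of \cref{D:WLLP}); the $2$-isomorphism $\alpha$ in that diagram is necessarily the identity, since the target is a point. Then $h := f \circ \pr_1 : A \times [0,1] \to \stack{X}$, with $\pr_1 : A \times [0,1] \to A$ the projection, is a strict solution: $h \circ i = f$ on the nose, and $p \circ h$ is the unique map $A\times[0,1] \to *$. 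Hence $i$ has the (strict, a fortiori weak) LLP with respect to $p$, so $p$ is a Serre fibration.

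Combining these observations with \cref{T:sing of WSR fib} completes the proof. I do not anticipate any real obstacle here; the only points needing a word of care are the identification $\sing(*) = *$ and the elementary fact that a morphism to a point is a Serre fibration (because $A \hookrightarrow A \times [0,1]$ is split by $\pr_1$), both of which are routine.
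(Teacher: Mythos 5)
Your proof is correct and is essentially the argument the paper intends: the corollary is stated without proof precisely because it is the special case of \cref{T:sing of WSR fib} applied to $\stack{X}\to *$, and your verifications (that $\sing(*)=\Delta^0$, that Reedy fibrancy of $\stack{X}$ is by definition the Reedy condition on $\stack{X}\to *$, and that any morphism to the terminal stack is a strict Serre fibration via the splitting $\pr_1$) are exactly the routine checks needed.
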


\begin{cor}
\label{C:WSerre replace}
	For every (weak) (trivial) Serre fibration of Serre stacks $p:\stack{X} \to \stack{Y}$,
    there exists a strictly commutative diagram
	\[
	\begin{tikzpicture}[scale=1]

	\node (a) at (0,1.5) {$\stack{X}$};
	\node (b) at (0,0) {$\stack{X}'$};
	\node (c) at (2,1.5) {$\stack{Y}$};
	
	\draw[->] (a) to node[left] {$\sim$} node[right] {$g$} (b);
	\draw[->] (a) to node[above] {$p$} (c);
	\draw[->] (b) to node[below right] {$p'$} (c);

\end{tikzpicture}
\]
  where $p'$ is a (weak) (trivial) Serre fibration as well as an injective (hence, also Reedy)
  fibration, and $g: \stack{X} \risom\stack{X}'$ is an equivalence of Serre stacks.
  In particular, $\sing(p'): \sing(\stack{X}') \to \sing(\stack{Y})$
  is a (weak) (trivial) Kan fibration.
\end{cor}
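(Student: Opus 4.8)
The plan is to reduce the statement to \cref{T:sing of WSR fib} by first replacing $p$ by an equivalent morphism that is an injective (hence Reedy) fibration, using \cref{P:reedy_replace}, and then checking that the Serre-fibration hypothesis and the Serre-stack hypothesis are both preserved under such a replacement.

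First I would apply \cref{P:reedy_replace} to the morphism $p : \stack{X} \to \stack{Y}$, regarded simply as a map of presheaves of groupoids. This yields a strictly commutative triangle $\stack{X} \xrightarrow{g} \stack{X}' \xrightarrow{p'} \stack{Y}$ with $p' \circ g = p$, where $g : \stack{X} \risom \stack{X}'$ is an equivalence of presheaves of groupoids and $p'$ is an injective fibration, hence (by \cref{P:inj=>reedy}) also a Reedy fibration.

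Next I would verify that $\stack{X}'$ is again a Serre topological stack and that $p'$ inherits the fibration property of $p$. Since $g$ is an equivalence and $\stack{X}$ is a Serre topological stack, $\stack{X}'$ is equivalent to a Serre topological stack, hence is itself one (being a Serre topological stack is a property of the equivalence class of a presheaf of groupoids). The pair $(g, \id_{\stack{Y}})$ exhibits $p'$ as a morphism equivalent to $p$, so by \cref{R:serre} $p'$ is a (weak) Serre fibration whenever $p$ is; for the trivial variant I would instead invoke \cref{P:trivserre}, which (since $\stack{X}$ and $\stack{X}'$ are Serre) characterizes a (weak) trivial Serre fibration as a (weak) Serre fibration that is a weak equivalence, and both of these conditions transfer across the equivalence $g$. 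Thus $\stack{X}'$ is a Serre stack, $g$ is an equivalence of Serre stacks, and $p'$ is simultaneously a (weak) (trivial) Serre fibration and an injective (hence Reedy) fibration, which is the asserted diagram.

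Finally, since $p' : \stack{X}' \to \stack{Y}$ is a morphism of Serre topological stacks that is a (weak) (trivial) Serre fibration and also a Reedy fibration, \cref{T:sing of WSR fib} applies directly and gives that $\sing(p') : \sing(\stack{X}') \to \sing(\stack{Y})$ is a (weak) (trivial) Kan fibration. I do not expect a genuine obstacle here; the only point that deserves care is the bookkeeping that ``Serre topological stack'' and ``(weak) (trivial) Serre fibration'' are invariant under replacing a presheaf of groupoids (respectively a morphism) by an equivalent one, which is precisely \cref{R:serre} supplemented by \cref{P:trivserre} in the trivial case.
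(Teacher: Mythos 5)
Your proposal is correct and follows essentially the same route as the paper: apply \cref{P:reedy_replace} to obtain the factorization, observe via \cref{R:serre} (and \cref{P:trivserre} for the trivial case) that the (weak) (trivial) Serre fibration property and the Serre-stack property persist under the equivalence, and then conclude with \cref{T:sing of WSR fib}. The paper's proof is the same argument stated in one line; your additional bookkeeping about invariance under equivalence is exactly the point the paper delegates to \cref{R:serre}.
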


\begin{proof}
	This follows from \cref{P:reedy_replace} and \cref{T:sing of WSR fib}. (Also
	see \cref{R:serre}.)
\end{proof}

\begin{cor}
\label{C:SX_Kan}
	For every Serre stack $\stack{X}$ there exists a Serre stack
	$\stack{X}'\sim \stack{X}$ equivalent to it that is Reedy fibrant
	 (hence,  $\sing(\stack{X}')$
	is a Kan simplicial set).
\end{cor}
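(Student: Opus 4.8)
The plan is to take $\stack{X}'$ to be an injective fibrant replacement of $\stack{X}$ and then verify that this does not destroy the relevant structure. Concretely, I would apply \cref{P:reedy_replace} to the unique morphism $p:\stack{X}\to *$, where $*$ denotes the terminal presheaf of groupoids. This yields a strictly commutative triangle consisting of an equivalence $g:\stack{X}\risom\stack{X}'$ of presheaves of groupoids together with a morphism $p':\stack{X}'\to *$ that is an injective fibration, hence by \cref{P:inj=>reedy} a Reedy fibration. By \cref{D:reedy_cond}, a Reedy fibration over the terminal object is precisely the statement that $\stack{X}'\Del$ is Reedy fibrant in $\sgpd$, i.e.\ that $\stack{X}'$ is Reedy fibrant.

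The step that requires a little care, and which I expect to be the main point to nail down, is that $\stack{X}'$ is still a Serre topological stack. For this I would argue two things. First, $\stack{X}'$ is a stack: the descent condition is preserved under objectwise equivalences of presheaves of groupoids, because the homotopy limit of the \v{C}ech diagram of groupoids is homotopy invariant (every groupoid is fibrant), so the equivalence $\stack{X}\risom\stack{X}'$ transports descent from $\stack{X}$ to $\stack{X}'$. Second, being a topological stack, and being Serre, are both defined as the existence of a suitable topological groupoid presentation up to equivalence; these are therefore properties of the equivalence class and pass along $g$. Hence $\stack{X}'\sim\stack{X}$ is a Reedy fibrant Serre topological stack, which is the assertion of the corollary.

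For the parenthetical claim I would invoke the preceding corollary; equivalently, apply \cref{T:sing of WSR fib} to $p':\stack{X}'\to *$, observing that the unique map to a point is trivially a (weak) Serre fibration (any homotopy is lifted by the constant homotopy). Since $p'$ is then both a Serre and a Reedy fibration, $\sing(p'):\sing(\stack{X}')\to\sing(*)=*$ is a Kan fibration, so $\sing(\stack{X}')$ is a Kan simplicial set. Overall I do not expect a serious obstacle here: the whole statement is a short consequence of \cref{P:reedy_replace} combined with the homotopy invariance of descent, and the only part genuinely worth checking is the middle paragraph above.
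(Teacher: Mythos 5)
Your proof is correct and takes essentially the same route as the paper, which states this corollary without a separate proof as an immediate consequence of \cref{P:reedy_replace} and \cref{T:sing of WSR fib} (equivalently, \cref{C:WSerre replace}) applied to the map $\stack{X}\to *$. Your middle paragraph, verifying that the injective fibrant replacement $\stack{X}'$ is still a Serre topological stack via homotopy invariance of descent and the fact that "topological" and "Serre" are properties of the equivalence class, fills in a detail the paper leaves implicit in \cref{C:WSerre replace}.
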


\section{Singular functor preserves weak equivalences}
\label{S:WE}

In this section, we prove that the singular functor has the correct homotopy type by 
showing that it takes a weak equivalence of topological stacks to a weak equivalence  
of simplicial sets (\cref{T:preserves we}). We begin with a special case.

%
%
%
%

\begin{prop}
\label{P:classifying_we}
	Let $\stack{X}$ be a Serre stack, and let $\varphi: X \to \stack{X}$ be a trivial
	weak Serre fibration with
	$X$ (equivalent to) a topological space
	(i.e., $X$ is a  classifying space for $\stack{X}$ in the sense of \cref{T:classifying}).
	Then,  $\sing(\varphi) : \sing(X) \to \sing(\stack{X})$ is a weak equivalence of
	simplicial sets.
\end{prop}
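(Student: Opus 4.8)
The plan is to reduce to a setting where \cref{T:sing of WSR fib} applies and then to invoke \cref{L:trivialgamma}.

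\emph{First reduction: make $\stack{X}$ Reedy fibrant.} By \cref{C:SX_Kan} there is a Reedy fibrant Serre stack $\stack{X}''$ together with an equivalence $e\colon\stack{X}\to\stack{X}''$. Being an equivalence, $e$ is a weak equivalence and, by \cref{R:serre}, a weak Serre fibration; since $\stack{X}$ is Serre, \cref{P:trivserre} shows $e$ is a trivial weak Serre fibration. The composite $e\circ\varphi\colon X\to\stack{X}''$ is equivalent as a morphism to $\varphi$ (via $\id_X$ on the source and $e$ on the target), hence again a weak Serre fibration by \cref{R:serre}, and it is a weak equivalence, so it too is a trivial weak Serre fibration. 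Since $\sing(e)$ is a weak equivalence by \cref{C:equiv=>we}, two-out-of-three reduces the claim for $\varphi$ to the claim for $e\circ\varphi$. Renaming, we may therefore assume that $\stack{X}$ itself is Reedy fibrant, so that $\sing(\stack{X})$ is a Kan complex (the corollary to \cref{T:sing of WSR fib} asserting that $\sing$ of a Reedy fibrant Serre stack is Kan).

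\emph{Second reduction: make $\varphi$ a Reedy fibration.} Apply \cref{C:WSerre replace} to $\varphi$: there is a strictly commutative factorization $\varphi = p'\circ g$ with $g\colon X\risom\stack{W}$ an equivalence of Serre stacks and $p'\colon\stack{W}\to\stack{X}$ a trivial weak Serre fibration that is also an injective (hence Reedy) fibration. By \cref{T:sing of WSR fib}, $\sing(p')\colon\sing(\stack{W})\to\sing(\stack{X})$ is a weak trivial Kan fibration.

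\emph{Conclusion via \cref{L:trivialgamma}.} The target $\sing(\stack{X})$ is Kan by the first reduction. For the auxiliary Kan complex admitting a weak equivalence into $\sing(\stack{W})$, take $\sing(X)$: since $X$ is a topological space, $\sing(X)$ is the usual singular complex (\cref{R:sing restricts}) and is Kan, and $\sing(g)\colon\sing(X)\to\sing(\stack{W})$ is a weak equivalence by \cref{C:equiv=>we} because $g$ is an equivalence of presheaves of groupoids. Hence \cref{L:trivialgamma} yields that $\sing(p')$ is a weak equivalence, and therefore $\sing(\varphi)=\sing(p')\circ\sing(g)$ is a composite of weak equivalences, hence a weak equivalence, as desired. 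The serious inputs are \cref{T:sing of WSR fib} and \cref{L:trivialgamma}; everything else is bookkeeping, and the point requiring the most care is checking that both reductions preserve the adjective ``trivial'' — so that \cref{T:sing of WSR fib} really delivers a \emph{trivial} weak Kan fibration and \cref{L:trivialgamma} is applicable — together with verifying its precise hypotheses, namely that $\sing(\stack{X})$ is Kan after the first reduction and that $\sing(X)$ is a genuine Kan complex mapping weakly equivalently into $\sing(\stack{W})$.
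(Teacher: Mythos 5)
Your proposal follows essentially the same route as the paper: reduce to a Reedy fibrant target via \cref{C:SX_Kan}, replace $\varphi$ by a Reedy fibration via \cref{C:WSerre replace}, apply \cref{T:sing of WSR fib} to get a trivial weak Kan fibration, and conclude with \cref{L:trivialgamma}. The two reductions are spelled out in more detail than in the paper, but the skeleton and the two ``serious inputs'' are identical.

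There is one small gap, precisely at the point the paper goes out of its way to flag. In the final step you assert that $\sing(X)$ is a Kan complex ``since $X$ is a topological space.'' But the hypothesis only says $X$ is \emph{equivalent to} a topological space, not isomorphic to one; after the replacement $g\colon X\risom\stack{W}$ the source of $p'$ is in any case only a presheaf of groupoids, and \cref{R:sing restricts} gives no Kan-ness for $\sing$ of something merely equivalent to a representable. The fix is the one the paper uses: pick an honest topological space $X'$ together with an equivalence $X'\to X$ (the paper constructs the inverse equivalence explicitly via Yoneda), note that $\sing(X')$ is the classical singular complex and hence genuinely Kan, and feed the composite weak equivalence $\sing(X')\to\sing(X)\to\sing(\stack{W})$ into \cref{L:trivialgamma} as the required auxiliary Kan complex. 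With that one-line amendment your argument is complete and matches the paper's.
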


\begin{proof}
We may assume that $\stack{X}$ is Reedy fibrant
(\cref{C:equiv=>we} and \cref{C:SX_Kan}). 
By \cref{C:equiv=>we} and \cref{C:WSerre replace}, we may assume that 
$\varphi : X \to \stack{X}$  is a trivial
weak Serre fibration as well as a Reedy fibration. 
Note that we are not insisting on
$X$ being  \emph{isomorphic} to  but only equivalent to a
topological space $X'$.

Observe that we can always find a pair of inverse equivalences
between $X$ and $X'$. On the one hand, we have that $\pi_0(X(T))=X'(T)$ for every 
$T \in \topspace$, so we have an
equivalence $p : X \to X'$. In particular, $X(X') \to X'(X')$ is an equivalence of groupoids
(the latter is actually a set).
Picking $f \in X(X')$ in the inverse image of $\id \in X'(X')$ and applying Yoneda's lemma,
we find the desired inverse $f :  X' \to X$ to $p$.

Now, by \cref{T:sing of WSR fib}, $\sing(\varphi) : \sing(X) \to \sing(\stack{X})$ 
is a trivial weak Kan fibration,
and $\sing(\stack{X})$ is Kan. Furthermore, the conditions of \cref{L:trivialgamma} 
are satisfied as
the map $\sing(X') \to \sing(X)$ is a weak equivalence (\cref{C:equiv=>we})
and $\sing(X')$ is Kan.
So, by \cref{L:trivialgamma}, $\sing(\varphi)$ is a weak equivalence.
\end{proof}


\begin{theorem}
\label{T:preserves we}
 Let $f : \stack{X}\to\stack{Y}$ be a weak equivalence of Serre stacks. Then,
 $\sing(f) : \sing(\stack{X})\to\sing(\stack{Y})$ is a weak equivalence of
 simplicial sets.
\end{theorem}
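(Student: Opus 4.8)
The plan is to reduce the theorem to the special case already handled in \cref{P:classifying_we} by comparing both $\stack{X}$ and $\stack{Y}$ to their classifying spaces. First I would invoke \cref{T:classifying} to choose classifying atlases $\varphi_{\stack{X}} : X \to \stack{X}$ and $\varphi_{\stack{Y}} : Y \to \stack{Y}$, which are trivial weak Serre fibrations with $X$ and $Y$ topological spaces. Since $X \times_{\stack{X}} \stack{X}$-type fiber products behave well, I can form the pullback $X' := X \ttimes_{\stack{X}} Y$ along $f \circ \varphi_{\stack{X}}$ and $\varphi_{\stack{Y}}$ (or, more symmetrically, use $X' = X \ttimes_{\stack{Y}} Y$ via the composite $X \to \stack{X} \xrightarrow{f} \stack{Y}$), obtaining a commutative square whose horizontal maps lift $f$ to a map of spaces. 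The key point is that $X'$ is again (equivalent to) a topological space — because $\varphi_{\stack{Y}} : Y \to \stack{Y}$ is a trivial weak Serre fibration, its pullback $X' \to X$ is a trivial weak Serre fibration of topological stacks, and a topological stack that is a trivial weak Serre fibration over a space is itself equivalent to a space (this uses that $X'$ admits a classifying atlas which is then a weak equivalence to $X'$, with $X'$ already space-like; one can also see it via \cref{T:classifying} applied to $X'$ together with the two-out-of-three property).

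Next I would arrange the diagram
\[
\begin{tikzpicture}[scale=1.5]
\node (a) at (0,1) {$X'$};
\node (b) at (2,1) {$Y$};
\node (c) at (0,0) {$\stack{X}$};
\node (d) at (2,0) {$\stack{Y}$};
\draw[->] (a) to node[above] {$\bar{f}$} (b);
\draw[->] (a) to node[left] {$\varphi'$} (c);
\draw[->] (b) to node[right] {$\varphi_{\stack{Y}}$} (d);
\draw[->] (c) to node[below] {$f$} (d);
\end{tikzpicture}
\]
where $\varphi'$ is a classifying atlas for $\stack{X}$ (a trivial weak Serre fibration with $X'$ equivalent to a space) and $\bar{f} : X' \to Y$ is the induced map of spaces. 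Applying $\sing$, the vertical maps $\sing(\varphi')$ and $\sing(\varphi_{\stack{Y}})$ are weak equivalences of simplicial sets by \cref{P:classifying_we}. By two-out-of-three, it now suffices to show that $\sing(\bar{f}) : \sing(X') \to \sing(Y)$ is a weak equivalence. But $\bar{f}$ is a map of topological spaces, on which $\sing$ agrees with the classical singular functor (\cref{R:sing restricts}); and $\bar{f}$ induces isomorphisms on all homotopy groups because $f$ does (by hypothesis) while $\varphi'$ and $\varphi_{\stack{Y}}$ induce isomorphisms on homotopy groups (being classifying atlases, by the discussion of $\pi_n$ of a topological stack just after \cref{T:classifying}), so by the long commutative square and two-out-of-three $\bar{f}_* : \pi_n(X') \to \pi_n(Y)$ is an isomorphism for all $n$ and all basepoints. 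Hence $\bar{f}$ is a weak homotopy equivalence of spaces, and the classical fact that $\sing$ preserves weak homotopy equivalences finishes the argument.

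The main obstacle I anticipate is justifying carefully that $X'$ is genuinely equivalent to a topological space (so that \cref{P:classifying_we} applies to $\varphi' : X' \to \stack{X}$), since $X'$ is a priori only a topological stack sitting in a trivial-weak-Serre-fibration over the space $X$. The cleanest route is: the pullback of a classifying atlas is again a "classifying-atlas-like" morphism, so $X' \to X$ is a trivial weak Serre fibration; then any classifying atlas $X'' \to X'$ (which exists by \cref{T:classifying}) composes with $X' \to X$ to a trivial weak Serre fibration $X'' \to X$ of spaces, hence a weak equivalence of spaces, and also $X'' \to X'$ is a weak equivalence, so $\stack{X}$'s pullback $X'$ has the homotopy type of a space; to upgrade "same homotopy type" to "equivalent as a presheaf of groupoids to a representable" one repeats the Yoneda argument from the proof of \cref{P:classifying_we} (producing mutually inverse equivalences between $X'$ and the space $X''$, or directly between $X'$ and $X$ when $X' \to X$ is already an equivalence of stacks, which happens precisely when the trivial weak Serre fibration $X' \to X$ is in fact an equivalence). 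A slightly slicker alternative that avoids this point entirely is to take $X' := X \ttimes_{\stack{Y}} Y$ computed so that $X' \to X$ and $X' \to Y$ are both trivial weak Serre fibrations of spaces (using that $\varphi_{\stack{Y}}$ pulls back to a trivial weak Serre fibration and $\varphi_{\stack{X}}$ does too); then $X'$ is a trivial-weak-Serre-fibration target over genuine spaces on both sides, and one applies \cref{P:classifying_we} to $X' \to \stack{X}$ after checking $X'$ is space-like via the two-sided comparison. Either way the remaining steps are routine two-out-of-three manipulations.
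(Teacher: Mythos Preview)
Your proposal is correct and follows essentially the same strategy as the paper: build a 2-commutative square comparing $f$ to a map of honest spaces via classifying atlases, apply \cref{P:classifying_we} to the vertical maps, and conclude by two-out-of-three.

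The only difference is in how the comparison space is produced. You first choose a classifying atlas $X \to \stack{X}$ and then form $X' = X \ttimes_{\stack{Y}} Y$, which forces you to argue separately that $X'$ is (equivalent to) a topological space and that $\varphi' : X' \to \stack{X}$ is still a trivial weak Serre fibration; this is exactly the ``main obstacle'' you flag. The paper sidesteps that obstacle by choosing the order of operations differently: it first picks $\psi : Y \to \stack{Y}$, then chooses a classifying atlas $h : X \to \stack{X}\ttimes_{\stack{Y}} Y$ for the fiber product, and sets $\varphi := \pr_1 \circ h$ and $f' := \pr_2 \circ h$. This way $X$ is a space by construction, and $\varphi$ is a trivial weak Serre fibration because it is a composite of two such (the classifying atlas $h$ and the base change $\pr_1$ of $\psi$), invoking \cite[Lemma~3.8]{No14}. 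Your route works but needs the extra paragraph you wrote; the paper's route makes that paragraph unnecessary.
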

\begin{proof}
 We can choose classifying atlases
 $\varphi : X \to \stack{X}$ and $\psi : Y \to\stack{Y}$
 (in the sense of \cref{T:classifying}) fitting in a
 2-commutative diagram
\[
\begin{tikzpicture}[scale=1]
	\node (a) at (0,2) {$X$};
	\node (b) at (0,0) {$\stack{X}$};
	\node (c) at (2,2) {$Y$};
	\node (d) at (2,0) {$\stack{Y}$};
	
	\draw[->] (a) to node[left] {$\varphi$} (b);
	\draw[->] (a) to node[above] {$f'$} (c);
	\draw[->] (b) to node[below] {$f$} (d);
	\draw[->] (c) to node[right] {$\psi$} (d);
	
	\draw[dbl] (1,2) to [bend left=30] node[below right] {} (0,1);
	
\end{tikzpicture}
\]
This is done as follows. Choose classifying atlases
$\psi: Y \to \stack{Y}$ and  $h : X \to \stack{X}\ttimes_{\stack{Y}}Y$. 
Set $\varphi=\pr_1\circ h$ and $f'=\pr_2\circ h$; by  (\cite{No14}, Lemma 3.8), 
$\varphi$ is again a trivial weak Serre fibration. 

Now, by the two-out-of-three property, $f'$ is a weak equivalence. Applying $\sing$,
we find a homotopy commutative diagram in simplicial sets where
$\sing(f')$, $\sing(\varphi)$ and $\sing(\psi)$ are weak equivalences of
simplicial sets (by \cref{P:classifying_we}, also see \cref{R:sing restricts}).
Therefore, $\sing(f)$ is also a weak equivalence
by the two-out-of-three property.
\end{proof}

\begin{cor}
\label{C:we_classifying_altas}
    Let $\stack{X}$ be  a Serre topological stack, and let 
    $\mathbb{X}=[R \rightrightarrows X]$ be a groupoid presentation for it. Then, 
    there is a natural weak equivalence
	\[
		\sing (\| N(\mathbb{X})\|) \to \sing (\stack{X} ),
	\]
	of simplicial sets, 
	where the left-hand occurrence of $\sing$ is the classical singular chains functor, 
	and $\|-\|$ denotes the fat geometric realization.
\end{cor}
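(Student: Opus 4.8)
The plan is to compare the fat realization $\|N(\mathbb{X})\|$ with an abstract classifying atlas of $\stack{X}$ by means of a $2$-fibre product, and then push everything through $\sing$.

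The one external ingredient I would invoke is from \cite{No12}: the fat geometric realization $\|N(\mathbb{X})\|$ comes with a natural map $q:\|N(\mathbb{X})\|\to\stack{X}$ which is a \emph{universal weak equivalence}, meaning that for every map $T\to\stack{X}$ from a topological space the projection $\|N(\mathbb{X})\|\ttimes_{\stack{X}}T\to T$ is a weak homotopy equivalence of topological spaces. (This is precisely the point at which the \emph{fat} rather than the ordinary realization is used, so that no well-pointedness hypothesis on the simplicial space $N(\mathbb{X})$ is needed, and it is why the statement holds for an arbitrary presentation.) By \cref{R:sing restricts}, $\sing$ restricted to $\topspace$ agrees with the classical singular functor, so the asserted map is $\sing(q)$, and it suffices to prove that $\sing(q)$ is a weak equivalence of simplicial sets.

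Next I would fix a classifying atlas $\varphi:X\to\stack{X}$ with $X$ a topological space, which exists by \cref{T:classifying}. Since $\varphi$ is an atlas it is representable, so the $2$-fibre product $P:=\|N(\mathbb{X})\|\ttimes_{\stack{X}}X$ is again a topological space; denote by $\pi:P\to\|N(\mathbb{X})\|$ and $\rho:P\to X$ the two projections and by $\epsilon:q\circ\pi\Rightarrow\varphi\circ\rho$ the canonical $2$-isomorphism. Two base-change remarks are then immediate. First, $\pi$ is the base change of $\varphi$ along $q$; since $\varphi$ is a trivial weak Serre fibration, the very statement of \cref{T:classifying} says that $\pi$ is a trivial weak Serre fibration of topological spaces, in particular a weak homotopy equivalence. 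Second, $\rho$ is the base change of $q$ along $\varphi$; since $q$ is a universal weak equivalence and $X$ is a topological space, $\rho$ is a weak homotopy equivalence of topological spaces.

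Finally I would apply $\sing$ to the span $\|N(\mathbb{X})\|\xleftarrow{\pi}P\xrightarrow{\rho}X$. By \cref{R:sing restricts} and the previous paragraph, $\sing(\pi)$ and $\sing(\rho)$ are weak equivalences of simplicial sets; by \cref{P:classifying_we}, $\sing(\varphi):\sing(X)\to\sing(\stack{X})$ is a weak equivalence; and by \cref{L:respectshomotopy}(i) the $2$-isomorphism $\epsilon$ induces a homotopy $\widehat{\epsilon}$ from $\sing(q)\circ\sing(\pi)$ to $\sing(\varphi)\circ\sing(\rho)$. The latter composite is a weak equivalence, hence so is $\sing(q)\circ\sing(\pi)$, and therefore so is $\sing(q)$ by two-out-of-three. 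Naturality is automatic, since $q$ is natural in the topological groupoid $\mathbb{X}$ and $\sing$ is a functor. The main obstacle is really just the first step — correctly invoking (and, if one wanted a self-contained treatment, reproving) the fact from \cite{No12} that $q$ is a universal weak equivalence for an arbitrary presentation; once that is in hand, the rest is a formal diagram chase with \cref{T:classifying}, \cref{P:classifying_we} and \cref{L:respectshomotopy}.
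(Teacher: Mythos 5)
Your argument is correct, but it is not the route the paper takes. The paper's proof is a one-line citation: by \cite{No14}, Corollary 3.17 together with \cite{No12}, Theorem 6.3, the natural map $q:\|N(\mathbb{X})\|\to\stack{X}$ is itself a classifying atlas in the strong sense of \cref{T:classifying} (a trivial weak Serre fibration from a space), so \cref{P:classifying_we} applies verbatim to $q$ and there is nothing left to prove. You instead import only the weaker fact from \cite{No12} that $q$ is a universal weak equivalence, and then transfer the conclusion from an abstract classifying atlas $\varphi:X\to\stack{X}$ to $q$ by forming the span $\|N(\mathbb{X})\|\xleftarrow{\pi}P\xrightarrow{\rho}X$ through the $2$-fibre product, using \cref{T:classifying} for $\pi$, the universal-weak-equivalence property for $\rho$, \cref{L:respectshomotopy}(i) to turn the canonical $2$-isomorphism into a homotopy, and two-out-of-three. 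Every step checks out; what your version buys is independence from the stronger input that the fat realization map is a trivial weak Serre fibration, at the cost of an extra diagram chase. One small point worth flagging: $P$ (like the atlas $X$ itself in \cref{P:classifying_we}) is in general only \emph{equivalent} to a topological space, so strictly speaking you should replace it by an honest space via \cref{C:equiv=>we} before invoking \cref{R:sing restricts} and the classical fact that $\sing$ of a weak homotopy equivalence of spaces is a weak equivalence of simplicial sets --- this is exactly the wrinkle the paper addresses explicitly in the proof of \cref{P:classifying_we}.
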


\begin{proof}
	This follows from the fact that there is a natural map 
	$\| N(\mathbb{X})\| \to \stack{X}$, and this map is a classifying space for
	$\stack{X}$; see \cite{No14}, Corollary 3.17 and \cite{No12}, Theorem 6.3.
\end{proof}


\providecommand{\bysame}{\leavevmode\hbox
to3em{\hrulefill}\thinspace}
\providecommand{\MR}{\relax\ifhmode\unskip\space\fi MR }
\providecommand{\MRhref}[2]{%
  \href{http://www.ams.org/mathscinet-getitem?mr=#1}{#2}
} \providecommand{\href}[2]{#2}

\end{document}